\newcommand{\ben}{\begin{enumerate}}
\newcommand{\een}{\end{enumerate}}
\newcommand{\eq}[2][label]{\begin{equation}\label{#1}#2\end{equation}}
\newcommand{\av}[2]{\langle #1\rangle_{_{\scriptstyle #2}}}
\newcommand{\avm}[2]{\langle #1\rangle_{_{\scriptstyle #2,\mu}}}
\newcommand{\cbl}[1]{{\color{black}#1}}
\newcommand{\cma}[1]{{\color{black}#1}}
\newcommand{\bel}[1]{\boldsymbol{#1}}
\newcommand{\ma}{Monge--Amp\`{e}re }
\newcommand{\BMO}{{\rm BMO}}
\newcommand{\BLO}{{\rm BLO}}
\newcommand{\T}{\mathcal{T}}
\newcommand{\rn}{\mathbb{R}^n}
\newcommand{\vf}{\varphi}
\newtheorem{theorem}{Theorem}[section]
\newtheorem{lemma}[theorem]{Lemma}
\newtheorem*{theorem*}{Theorem}{\bf}{\it}
\newtheorem*{proposition*}{Proposition}{\bf}{\it}
\newtheorem*{observation*}{Observation}{\bf}{\it}
\newtheorem*{lemma*}{Lemma}{\bf}{\it}
\theoremstyle{definition}
\newtheorem{definition}[theorem]{Definition}
\theoremstyle{remark}
\newtheorem{remark}[theorem]{Remark}
\numberwithin{equation}{section}
\begin{document}

\title{The $\!\BMO\to\BLO\!$ action of the maximal operator on $\alpha$-trees}

\author{Adam Os\c{e}kowski}
\address{Faculty of Mathematics, Informatics and Mechanics, University of Warsaw,
Banacha 2, 02-097 Warsaw, Poland}
\email{ados@mimuw.edu.pl}

\author{Leonid Slavin}
\address{University of Cincinnati}
\email{leonid.slavin@uc.edu}

\author{Vasily Vasyunin}
\address{St. Petersburg Department of the V.~A.~Steklov
Mathematical Institute, RAS, and St. Petersburg State University}
\email{vasyunin@pdmi.ras.ru}

\thanks{L. Slavin's and V. Vasyunin's research was supported by the Russian Science Foundation grant 14-41-00010}

\subjclass[2010]{Primary 42A05, 42B35, 49K20}

\keywords{BMO, BLO $\alpha$-trees, maximal functions, explicit Bellman function, sharp constants}

\begin{abstract}
We obtain the explicit upper Bellman function for the natural dyadic maximal operator acting from $\BMO(\rn)$ into $\BLO(\rn).$ As a consequence, we show that the $\BMO\to\BLO$ norm of the natural operator equals 1 for all $n,$ and so does the norm of the classical dyadic maximal operator. 
The main result is a partial corollary of a theorem for \cma{the so-called} $\alpha$-trees, which generalize dyadic lattices. The Bellman function in this setting exhibits an interesting quasi-periodic structure depending on $\alpha,$ but also allows a majorant independent of $\alpha,$ hence the dimension-free norm constant. \cma{We also describe the decay of the norm with respect to the} difference between the average of a function on a cube and the infimum of \cma{its} maximal function on that cube. An explicit norm-\cma{optimizing} sequence is constructed.
\end{abstract}
\maketitle

\section{Introduction and main results}
We are interested in the action of the maximal operator on BMO. In~\cite{bds}, Bennett, DeVore, and Sharpley showed that the Hardy-Littlewood maximal function maps $\BMO$ to itself. In~\cite{bennett}, Bennett strengthened this result by showing that it actually maps $\BMO$ to a subclass of $\BMO$ called $\BLO$ (``bounded lower oscillation''). Bennett's proof is elementary, but the estimates it gives are not sharp. \cma{As far as we know}, the exact \cma{operator norm} in this setting has not been evaluated for any maximal \cma{operator}. In this paper, we conduct a detailed study of the action of the dyadic maximal operator, as well as more general maximal operators on trees, from BMO into BLO. Let us first set forth the necessary definitions.

We will use $\mathcal{D}$ to denote the collection of all open dyadic cubes in $\rn.$ If a cube $Q$ is fixed, then $\mathcal{D}(Q)$ denotes the collection of all dyadic subcubes of $Q.$
The symbol $\av{\varphi}J$ will stand for the average of a locally integrable function over a set $J$ with respect to the Lebesgue measure; if a different measure, $\mu,$ is involved, we write
$\av{\varphi}{J,\mu}.$ Thus,
$$
\av{\varphi}J=\frac1{|J|}\int_J\varphi,\qquad \avm{\varphi}{J}=\frac1{\mu(J)}\int_J\varphi\,d\mu.
$$
 
The dyadic $\BMO$ on $\rn$ is defined as follows:
\eq[1]{
\BMO^d(\rn)=\big\{\varphi\in L^2_{loc}\colon\|\varphi\|_{\BMO^d}:=\sup_{J\in\mathcal{D}}\big(\av{\varphi^2}J-\av{\varphi}J^2\big)^{1/2}<\infty\big\}.
}
We will also use $\BMO^d(Q)$ when the supremum is taken over all $J\in\mathcal{D}(Q)$ for some cube $Q.$

The dyadic $\BLO$ on $\rn$ is defined by:
\eq[1.1]{
\BLO^d(\rn)=\big\{\varphi\in L^1_{loc}\colon\|\varphi\|_{\BLO^d}:=\sup_{J\in\mathcal{D}}\big(\av{\varphi}J-\inf_J\varphi\big)<\infty\big\}.
}
\cma{(Throughout the paper, we use ``$\inf$'' as shorthand for ``${\rm ess\, inf}$''.)} BLO was introduced by Coifman and Rochberg in~\cite{cr}. It is easy to see that $\BLO\subset\BMO.$ However, this inclusion is proper: for instance, the function $t\mapsto \log|t|$ is in $\BMO(\rn),$ but not in $\BLO(\rn).$ (This also shows that the class BLO is not linear and, in particular, not preserved under multiplication by a constant, as the function $-\log |t|$ is in BLO.) A useful viewpoint is this: per the John--Nirenberg inequality, a BMO function is a constant multiple of the logarithm of an $A_\infty$ weight; on the other hand, as shown in~\cite{cr}, a BLO function is a non-negative multiple of the logarithm of an $A_1$ weight.

We consider two dyadic maximal operators. The first one is the classical dyadic maximal function given by 
$$
M\varphi(x)=\sup_{\cma{J\ni x; J\in\mathcal{D}}}\av{|\varphi|}J.
$$
The second is the so-called natural counterpart of $M,$ without the absolute value in the average:
$$
N\varphi(x)=\sup_{\cma{J\ni x; J\in\mathcal{D}}}\av{\varphi}J.
$$
Obviously, $M$ and $N$ coincide on non-negative functions.

In light of Bennett's result from~\cite{bennett}, we expect that both $M$ and $N$ would map $\BMO^d$ to $\BLO^d,$ which can be written as follows: for any $Q\in\mathcal{D},$
$$
\av{M\varphi}Q\le c_n\|\varphi\|_{\BMO^d(\rn)}+\inf_QM\varphi,
\qquad \av{N\varphi}Q\le c_n\|\varphi\|_{\BMO^d(\rn)}+\inf_QN\varphi.
$$
We will first show \cma{the} inequality for the operator $N,$ with the sharp constant $c_n$ and with the BMO norm taken over $Q,$ as opposed to all of $\rn.$ The inequality for $M,$ with the same constant, then follows easily:
$$
\av{M\varphi}Q-\inf_QM\varphi=\av{N|\varphi|}Q-\inf_QN|\varphi|\le c_n\||\varphi|\|_{\BMO^d(Q)}\le c_n\|\varphi\|_{\BMO^d(Q)},
$$
where the last inequality follows because $\av{\varphi^2}J-\av{|\varphi|}J^2\le \av{\varphi^2}J-\av{\varphi}J^2$  for any $J.$ In Section~\ref{optimizers} we will provide a non-negative optimizing sequence, proving the sharpness of both inequalities. 

The norm inequality for $N$ is, in fact, a special case of a more general inequality relating $\av{N\varphi}Q,$ $\inf_Q N\varphi,$ $\av\varphi Q,$ and $\|\varphi\|_{\BMO^d(Q)}.$ 
Here is our main theorem.
\begin{theorem}
\label{main_th}
Let $Q\in\mathcal{D}.$ Take a function $\varphi\in L^1_{loc}(\rn)$ such that $N\varphi$ is not identically infinite on $Q$ and $\varphi|_Q\in\BMO^d(Q).$  Let $L=\inf_QN\varphi;$ $t=\inf_QN\varphi-\av{\varphi}Q.$ Then
\eq[mt1]{
\av{N\varphi}Q\le L+\Phi_n\big(t\big)\, \|\varphi\|_{\BMO^d(Q)},
}
\cma{where $\Phi_n$ is a decreasing\textup, convex function on $[0,\infty)$ satisfying
\eq[mt1.5]{
\Phi_n(k(2^{n/2}-2^{-n/2}))=2^{-nk}
}
for all non-negative integers $k.$} Consequently\textup, if $\varphi\in\BMO^d(\rn),$ then $N\varphi\in\BLO$ and
\eq[mt2]{
\|N\varphi\|_{\BLO^d(\rn)}\le \|\varphi\|_{\BMO^d(\rn)}.
}
Both inequalities are sharp. Moreover, both inequalities remain true if the operator $N$ is replaced with the operator $M,$ and inequality~\eqref{mt2} remains sharp under this replacement.
\end{theorem}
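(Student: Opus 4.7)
The plan is to prove Theorem~\ref{main_th} by the Bellman-function method, specializing the general $\alpha$-tree theorem alluded to in the abstract. The starting point is the Bellman function
$$
\mathbf{B}(x_1,x_2,\ell)=\sup\Big\{\av{N\varphi}Q-\ell\colon \av\varphi Q=x_1,\ \av{\varphi^2}Q=x_2,\ \inf_Q N\varphi=\ell,\ \|\varphi\|_{\BMO^d(Q)}\le 1\Big\}.
$$
By translation invariance (in the constant of $\varphi$) and homogeneity (in the BMO norm), $\mathbf{B}$ reduces to a function of $t=\ell-x_1\ge 0$ with the variance pushed to its maximal admissible value~$1$. This defines $\Phi_n(t)$, and~\eqref{mt1} amounts to the bound $\mathbf{B}\le \Phi_n(t)\,\|\varphi\|_{\BMO^d(Q)}$.

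Next I would derive a functional inequality for $\Phi_n$ from the dyadic splitting of $Q$ into its $2^n$ children $Q_1,\ldots,Q_{2^n}$. Writing $\av\varphi{Q_j}=x_1+\xi_j$ with $\sum_j\xi_j=0$, one has $\inf_{Q_j}N\varphi\ge\max\{\ell,x_1+\xi_j\}$, together with a BMO budget constraint relating the parent variance and the child variances plus $2^{-n}\sum_j\xi_j^2$. The extremal splitting puts one ``base'' child at $\xi_{\mathrm{base}}=-(2^{n/2}-2^{-n/2})$ with full variance~$1$ (on which one iterates), and the other $2^n-1$ ``flat'' children at $+2^{-n/2}$ with $\varphi$ constant; a direct check shows that this choice exactly exhausts the BMO budget and shifts $t\mapsto t+(2^{n/2}-2^{-n/2})$ on the base. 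Iterating $k$ times produces a nested family $Q\supset Q^{(1)}\supset\dots\supset Q^{(k)}$ with $|Q^{(k)}|/|Q|=2^{-nk}$, and the bookkeeping along the iteration yields the interpolation identities $\Phi_n(k(2^{n/2}-2^{-n/2}))=2^{-nk}$. Between consecutive breakpoints $\Phi_n$ is defined as the minimal decreasing convex interpolant consistent with the recursion; this is the ``quasi-periodic structure'' highlighted in the abstract.

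The chief obstacle I anticipate is producing a clean closed-form for $\Phi_n$ on each piece $[t_k,t_{k+1}]$ and verifying rigorously that the resulting function is decreasing and convex on all of $[0,\infty)$. Once $\Phi_n$ is in hand, inequality~\eqref{mt2} is immediate: the case $k=0$ of the interpolation gives $\Phi_n(0)=1$, so $\av{N\varphi}Q-\inf_Q N\varphi\le \|\varphi\|_{\BMO^d(Q)}\le \|\varphi\|_{\BMO^d(\rn)}$, and taking $\sup_Q$ yields the BLO/BMO estimate. Sharpness of both~\eqref{mt1} and~\eqref{mt2} is witnessed by the nested extremal sequence above, made explicit in Section~\ref{optimizers}. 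Finally, the $M$-version of both inequalities follows from the identity $M\varphi=N|\varphi|$ and the bound $\|\,|\varphi|\,\|_{\BMO^d}\le\|\varphi\|_{\BMO^d}$ already noted in the excerpt.
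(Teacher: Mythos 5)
Your proposal correctly identifies the framework (a Bellman function for the dyadic maximal operator with the ``external supremum'' $L$ tracked as a state variable, reducible modulo translation), and the arithmetic at the breakpoints is right: the splitting into one ``base'' child at offset $-(2^{n/2}-2^{-n/2})$ with full variance and $2^n-1$ flat children at $+2^{-n/2}$ indeed exhausts the BMO budget and produces $\Phi_n(k(2^{n/2}-2^{-n/2}))=2^{-nk}$. But there are two genuine gaps that prevent this from being a proof.

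First, the reduction to a one-variable recursion in $t$ alone does not close. After quotienting out the translation, the Bellman function is still a function of \emph{two} variables $(x_1,x_2)$ on the parabolic strip $\Omega=\{x_1^2\le x_2\le x_1^2+1\}$; the function $\Phi_n(t)$ is only the restriction to the upper boundary $x_2=x_1^2+1$ (see \eqref{est2}). The dyadic splitting inequality relates the parent's $(x_1,x_2)$ to the children's $(x_1^j,x_2^j)$ via the variance identity; you cannot propagate only $t$ because a child may land strictly inside $\Omega$ (with variance $<1$), and the inductive step then needs the value of the Bellman function there, not just on the boundary. The paper's proof of \eqref{mt1} therefore necessarily constructs a two-variable $\alpha$-concave majorant $B$ on all of $\Omega$ (Sections~\ref{bellman_formulation} and~\ref{alpha_concavity}) and runs the Bellman induction (Lemma~\ref{induction}) against that $B$, not against $\Phi_n$ itself.

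Second, the characterization of $\Phi_n$ between breakpoints as ``the minimal decreasing convex interpolant consistent with the recursion'' is incorrect as stated and cannot be repaired along those lines. The true $\Phi_n(t)=b(-t)$ given in~\eqref{bp}--\eqref{gy} is piecewise \emph{nonlinear}: affine on the even-indexed pieces $\Omega_{2k}$, but involving $f(y)=\frac1{27}\big(2y^3+2y^2\sqrt{y^2+3}+9y+6\sqrt{y^2+3}\big)$ on the odd-indexed pieces. It is obtained not by convex interpolation but by solving a Monge--Amp\`ere boundary-value problem for the 2D candidate with a prescribed boundary trace on $\Gamma_1$ (Section~\ref{how_to}). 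Relatedly, your sharpness argument — iterating the extremal splitting — only produces optimizers at the breakpoints $t=k\tau$. The paper proves sharpness of \eqref{mt1} for \emph{all} $t$ without explicit optimizers, using instead the abstract splitting concavity of $\bel{B}^n$ (Lemma~\ref{mi}), its local concavity (Lemma~\ref{loc_conc}), and the geometry of the extremal foliation (Lemmas~\ref{x_1=0}, \ref{gamma_1}, \ref{l=0}); the explicit optimizing sequence in Section~\ref{optimizers} is supplied only for the norm point $(0,1)$. What you describe does suffice to establish \eqref{mt2} and the case $k=0$, and your $M$-vs-$N$ argument matches the paper; but as a proof of the full statement including \eqref{mt1.5} and its sharpness, the proposal is missing the two-variable Bellman analysis that is the substance of the argument.
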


\begin{remark}
In Section~\ref{bellman_formulation}, we give the optimal function $\Phi_n(t)$ for all $t$ (specifically, $\Phi_n(t)=\cma{b(-t)}$ where $b$ is given by~\eqref{bp}--\eqref{gy} for $\alpha=2^{-n}$); it has too complicated an expression to be useful in the context of this theorem. 
\cma{For} that $\Phi_n,$ the stated sharpness of~\eqref{mt1} means that for any dyadic cube $Q,$ any real number $L,$ and any $t\ge0,$ there exists a sequence of functions $\{\varphi_j\}$ such that $\varphi_j|_Q\in\BMO^d(Q),$ with $\|\varphi_j\|_{\BMO^d(Q)}=1;$ $\inf_QN\varphi_j=L;$ and $\inf_QN\varphi_j-\av{\varphi_j}Q=t;$ and also such that
$$
\lim_{j\to\infty}\av{N\varphi_j}Q=L+\Phi_n\big(t\big).
$$ 
Likewise, the sharpness of~\eqref{mt2} means that there is a sequence $\{\varphi_j\}$ of functions from $\BMO^d(\rn)$ such that $\|\varphi_j\|_{\BMO^d(\rn)}=1$ for all $j$ and
$$
\|N\varphi_j\|_{\BLO^d(\rn)}\to1~\text{as}~j\to\infty.
$$
\end{remark}

The theorem for dyadic BMO stated above is a partial corollary of a more general theorem for special structures that we will call $\alpha$-trees. These structures generalize dyadic lattices. \cma{They} were introduced by the second and third authors in~\cite{alpha_trees} to obtain the sharp John--Nirenberg inequality for $\BMO^d(\rn).$ However, the first author had previously used very similar tree structures in~\cite{ose1} (called $\alpha$-splitting trees there) to obtain sharp weak-type maximal inequalities for dyadic $A_1(\rn).$ The proofs in~\cite{alpha_trees},~\cite{ose1}, as well as in the current paper rely on Bellman functions adapted to trees. Similar nested structures have also been used in Bellman-function contexts by Melas \cite{melas} and Melas, Nikolidakis, and Stavropoulos \cite{mns}; see also the works of Ba\~nuelos and Os\c{e}kowski~\cite{bose} and~Os\c{e}kowski \cite{ose2}. The important distinction is that the trees used by those authors were homogeneous, meaning every element of the tree had the same number of offspring, all of the same measure. In our definition (and in the one in~\cite{ose1}), the number of offspring is not restricted, as long as none is too small \cma{relative to the parent}.

\begin{definition}
\label{tree}
Let $(X,\mu)$ be a measure space with $0<\mu(X)<\infty.$ Let $\alpha\in(0,1/2].$ A collection $\T$ of measurable subsets of $X$ is called an $\alpha$-tree, if the following conditions are satisfied:
\ben
\item
$X\in\T.$
\item
For every $J\in\T,$ there exists a subset $C(J)\subset\T$ such that 
\ben
\item
$J=\bigcup_{I\in C(J)} I,$
\item
the elements of $C(J)$ are pairwise disjoint up to sets of measure zero,
\item
for any $I\in C(J),$ $\mu(I)\ge\alpha\mu(J).$
\een
\item
$\T=\bigcup_m\T_m,$ where $\T_0=\{X\}$ and $\T_{m+1}=\bigcup_{J\in\T_m}C(J).$ 
\item
The family $\T$ differentiates $L^1(X,\mu)$: \cma{for each $x\in X,$ let $J^x_k$ be any element of $\T_k$ containing $x.$ Then for $\mu$-almost every $x\in X$ and every $f\in L^1(X,\mu),$ we have
$
\lim_{k\to\infty}\avm{f}{J_k^x}=f(x).
$
}
\een
 

Observe that each $C(J)$ is necessarily finite. We will refer to the elements of $C(J)$ as children of $J$ and to $J$ as their parent. Also note that $\T(J):=\{I\in\T: I\subset J\}$ is an $\alpha$-tree on $(J,\mu|_J).$ We write $\mathcal{T}_k(J)$ for the collection of all descendants of $J$ of the $k$-th generation relative to $J;$ thus, $\T(J)=\bigcup_k\T_k(J).$

\end{definition}

If $\alpha\in(0,1/2]$ and $\T$ is an $\alpha$-tree on a measure space $(X,\mu),$ then we can define the associated $\BMO,$ $\BLO,$ and maximal operators as follows:
$$
\varphi\in\BMO(\mathcal{T})\Longleftrightarrow \|\varphi\|_{\BMO(\mathcal{T})}:=\sup_{J\in\mathcal{T}}\{\av{\varphi^2}{J,\mu}-\av{\varphi}{J,\mu}^2\}^{1/2}<\infty,
$$
$$
\varphi\in\BLO(\mathcal{T})\Longleftrightarrow \|\varphi\|_{\BLO(\mathcal{T})}:=\sup_{J\in\mathcal{T}}\{\av{\varphi}{J,\mu}-\inf_J\varphi\}<\infty,
$$
$$
M_\T\varphi(x)=\sup_{\cma{J\ni x; J\in\T}}\av{|\varphi|}J,\qquad N_\T\varphi(x)=\sup_{\cma{J\ni x; J\in\T}}\av{\varphi}J.
$$
With these definitions, we have the following theorem.
\begin{theorem}
\label{main_th_trees}
Let $(X,\mu)$ be a measure space, $\alpha\in(0,1/2],$ $\T$ be an $\alpha$-tree on $X,$ and $K\in\T.$ Take a function $\varphi$ on $X$ such that $N_\T\varphi$ is not identically infinite on $K$ and $\varphi|_K\in\BMO\big(\T(K)\big).$  Let $L=\inf_KN_\T\varphi$ \cma{and} $t=\inf_KN_\T\varphi-\avm{\varphi}K.$ Then
\eq[mt1.1]{
\avm{N_\cma{\T}\varphi}K\le L+\mathcal{F}_\alpha\big(t\big)\, \|\varphi\|_{\BMO(\T(K))},
}
where $\mathcal{F}_\alpha$ is a \cma{decreasing convex function on $[0,\infty)$ satisfying
\eq[mt1.6]{
\mathcal{F}_\alpha\Big(k\big(\alpha^{-1/2}-\alpha^{1/2}\big)\Big)=\alpha^k
}
for all non-negative integers $k.$} Consequently\textup, $N_\T\varphi\in\BLO(\T)$ and
\eq[mt2.1]{
\|N_\T\varphi\|_{\BLO(\T)}\le \|\varphi\|_{\BMO(\T)}.
}
Both inequalities remain true if the operator $N_\T$ is replaced with the operator $M_\T.$
\end{theorem}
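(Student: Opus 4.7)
The plan is to carry out a Bellman-function argument in the style of \cite{alpha_trees,ose1}, adapted to $\alpha$-trees. First I normalize: by homogeneity ($\varphi\mapsto c\varphi$) and translation ($\varphi\mapsto\varphi+c$), both sides of \eqref{mt1.1} rescale consistently, so it suffices to treat $\|\varphi\|_{\BMO(\T(K))}=1$ and $\avm{\varphi}K=0$. Then $L=t\ge 0$ and \eqref{mt1.1} becomes $\avm{N_\T\varphi}K\le t+\mathcal{F}_\alpha(t)$. Since every quantity depends only on $\varphi|_K$, I pass to the subtree $\T(K)$ on $(K,\mu|_K)$ and assume $X=K$; any ancestor contribution to $N_\T\varphi$ on $K$ is a single constant that either dominates $\inf_K N_{\T(K)}\varphi$ (making the inequality trivial, since then $N_\T\varphi$ is constant on $K$) or does not affect the bound.

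Next I would introduce the Bellman function
$$ B(x_1,x_2,L)=\sup\Big\{\avm{N_\T\varphi}X-L:\ \avm{\varphi}X=x_1,\ \avm{\varphi^2}X=x_2,\ \inf_XN_\T\varphi=L,\ \|\varphi\|_{\BMO(\T)}\le 1\Big\}, $$
the supremum running over all $\alpha$-trees $\T$ on all $(X,\mu)$ with $0<\mu(X)<\infty$ and all admissible $\varphi$. Translation invariance in $x_1$ and homogeneity reduce $B$ to a function of $L-x_1$ and $x_2-x_1^2$; the target $\mathcal{F}_\alpha(t)$ is then $B(0,1,t)$, and the grid values in \eqref{mt1.6} serve as anchors.

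The heart of the proof is to exhibit an explicit majorant $\mathcal{B}\ge B$ satisfying the boundary condition $\mathcal{B}(x_1,x_1^2,x_1)=0$ (since if $\avm{\varphi^2}X=x_1^2$ then $\varphi\equiv x_1$, so $N_\T\varphi\equiv x_1$) and the following main inequality: for every $J\in\T$ with children $\{I_k\}_{k=1}^m=C(J)$ of weights $\lambda_k=\mu(I_k)/\mu(J)\ge\alpha$, $\sum\lambda_k=1$, and any data $(x_1^{(k)},x_2^{(k)},L^{(k)})$ at the children satisfying $\sum_k\lambda_k x_j^{(k)}=x_j$ ($j=1,2$) and $L^{(k)}\ge\max(L,x_1)$,
$$ \mathcal{B}(x_1,x_2,L)\ge\sum_{k=1}^m\lambda_k\bigl[\mathcal{B}(x_1^{(k)},x_2^{(k)},L^{(k)})+L^{(k)}-L\bigr]. $$
The constraint $L^{(k)}\ge\max(L,x_1)$ records that on $I_k\subset J$, $N_\T\varphi$ dominates both the parent average $\avm{\varphi}J=x_1$ and the running infimum $L$. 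Iterating this inequality along the levels $\T_m(K)$ of $\T(K)$ and passing to the limit via the differentiation property (4) of Definition~\ref{tree} produces $\avm{N_\T\varphi}K-L\le\mathcal{B}(\avm{\varphi}K,\avm{\varphi^2}K,L)$, which is the normalized form of \eqref{mt1.1}.

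The $\BLO$ bound \eqref{mt2.1} then follows by applying \eqref{mt1.1} with $t=0$ on every $J\in\T$ and using $\mathcal{F}_\alpha(0)=\alpha^0=1$, while the passage from $N_\T$ to $M_\T$ goes through $M_\T\varphi=N_\T|\varphi|$ together with $\|\,|\varphi|\,\|_{\BMO(\T)}\le\|\varphi\|_{\BMO(\T)}$, exactly as in the introduction. The hard part will be the verification of the main inequality for the explicit $\mathcal{B}$: the relation \eqref{mt1.6} forces $\mathcal{B}$ to be piecewise defined on intervals of $t$-length $\alpha^{-1/2}-\alpha^{1/2}$ (the quasi-periodic structure), so the check crosses interfaces and matching values and derivatives of $\mathcal{B}$ there is delicate; moreover, the $\alpha$-tree axiom permits $J$ to split into arbitrarily many children of arbitrary weights $\ge\alpha$, making the inequality genuinely multi-parameter. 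A natural path is to reduce first to the extremal two-child split $(\lambda_1,\lambda_2)=(\alpha,1-\alpha)$ by exploiting concavity of $\mathcal{B}$ in $x_2$, and then to verify the resulting one-parameter inequality piece by piece.
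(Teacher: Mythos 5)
Your strategy is essentially the paper's: a Bellman induction over tree levels against an explicit candidate, using a main inequality whose iteration (together with differentiation of $L^1$) yields \eqref{mt1.1}, then a reduction from general splits to two-point $\alpha$-concavity, and finally $M_\T\varphi=N_\T|\varphi|$ with $\||\varphi|\|_{\BMO}\le\|\varphi\|_{\BMO}$ for the $M_\T$ version. This matches the paper's Lemmas~\ref{one_step}, \ref{induction}, \ref{a-conc}, and \ref{a_ver}, together with its proof of Theorem~\ref{main_th_trees}.

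However, the ``main inequality'' as you formulate it is false. You ask that
$$\mathcal{B}(x_1,x_2,L)\ge\sum_k\lambda_k\bigl[\mathcal{B}(x_1^{(k)},x_2^{(k)},L^{(k)})+L^{(k)}-L\bigr]$$
hold for \emph{any} $L^{(k)}\ge\max(L,x_1)$, but the left side is bounded while the right side blows up as $L^{(k)}\to\infty$ (on the candidate one has $\mathcal{B}(x^{(k)},L^{(k)})=B(T_{L^{(k)}}x^{(k)})\to 0$, yet $L^{(k)}-L\to\infty$). What the dynamics supply, and what you must exploit, is the exact recursion $L^{(k)}=\max\bigl(L,x_1^{(k)}\bigr)$ (child average, not parent average), in which case either $L^{(k)}=L$ (no extra term) or $L^{(k)}=x_1^{(k)}>L$, in which case $A(x^{(k)};L)=A(x^{(k)};x_1^{(k)})$ by the boundary behaviour at $x_1=L$. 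The paper encodes this cleanly by splitting the single inequality into three pieces: $\alpha$-concavity of $x\mapsto A(x;L)$ for each \emph{fixed} $L$; the identity $A(x;L)=A(x;x_1)$ when $x_1\ge L$; and $A(x;L)\ge L$ when $x_1\le L$ (conditions \ref{cond1}--\ref{cond3} of Lemma~\ref{induction}). Chaining \ref{cond1} and \ref{cond2} level by level and finishing with \ref{cond3} gives exactly your intended telescoping without the spurious freedom in $L^{(k)}$. Two further small inaccuracies: the reduction from multi-way splits to two-point splits comes from the inductive lemma in~\cite{alpha_trees} (applied for all $\beta\in[\alpha,1/2]$), not from concavity in $x_2$ and the single ratio $(\alpha,1-\alpha)$; and your preliminary reduction to $X=K$ is unnecessary and its justification is off --- an ancestor contribution $L_0$ dominating $\inf_K N_{\T(K)}\varphi$ does \emph{not} make $N_\T\varphi$ constant on $K$ unless $L_0\ge\sup_K N_{\T(K)}\varphi$; the paper avoids this by carrying $L$ as a third Bellman variable from the start.
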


\begin{remark}
Setting \cma{in this theorem} $\alpha=2^{-n}$ and $\mathcal{F}_\alpha=\Phi_n,$ we immediately obtain the inequalities in Theorem~\ref{main_th}. However, we do not claim sharpness here, thus the sharpness in the dyadic case must be established separately. If one restricts consideration to non-atomic trees as done in the work of Melas and co-authors, e.g. in \cite{melas,melas1,mn,mns},  and demands that each element $K$ of a tree have a child of measure $\alpha\mu(K),$ then for each tree one can construct optimizing sequences for the inequalities in Theorem~\ref{main_th_trees}. With our definition, however, one can easily come up with a tree for which the inequalities will not be sharp.
\end{remark}

We close this section with a brief history of our project. It grew out of a related project, one devoted to sharp estimates for the dyadic maximal operator acting on $A_\infty.$ As shown in~\cite{ou} (see also \cite{ou1}), \cma{the boundedness of the operator} $N\colon \BMO\to\BLO$ \cma{is equivalent to the boundedness of the operator} $M\colon A_\infty\to A_1$  (the proof in~\cite{ou} is given for the non-dyadic case, but it works for any maximal \cma{operator}). However, the sharpness in \cma{one of the corresponding} inequalities \cma{does not transfer to the other}, so to get the sharp bounds one needs to deal with these questions separately. It turns out that the $N\colon \BMO\to\BLO$ question considered here is computationally easier and thus makes for a better starting point. The $M\colon A_\infty\to A_1$ question will be considered elsewhere.

The rest of the paper is organized \cma{as} follows.
In Section~\ref{bellman_formulation}, we define the Bellman function for the dyadic problem. We also define $\alpha$-concave functions and show that \cma{a suitable} family of such functions would provide a majorant for the left-hand side of~\eqref{mt1.1} and thus also majorate the dyadic Bellman function. We then give an explicit formula for such a family function, but postpone the (somewhat technical) verification of its $\alpha$-concavity until Section~\ref{alpha_concavity}. Subject to that verification, this establishes the upper estimates in Theorems~\ref{main_th} and~\ref{main_th_trees}. In Section~\ref{optimizers}, we show that the dyadic majorant is, in fact, equal to the dyadic Bellman function; in particular, this establishes the sharpness of the inequalities in~Theorem~\ref{main_th}. Our proof uses abstract concavity properties of the Bellman function and does not rely on explicit optimizers. However, we also present a norm-optimizing sequence for the operator $N.$
In Section~\ref{alpha_concavity}, we verify the $\alpha$-concavity assumed in Section~\ref{bellman_formulation}. Finally, in Section~\ref{how_to}, we outline how we obtained the Bellman candidate presented in Section~\ref{bellman_formulation}.

\section{The Bellman function, $\alpha$-concavity, and the main Bellman theorem}
\label{bellman_formulation}

To prove Theorem~\ref{main_th}, we compute the corresponding Bellman function, which is the solution of the underlying extremal problem. To define this function, first consider the following parabolic domain in the plane:
$$
\Omega=\{(x_1,x_2)\colon x_1^2\le x_2\le x_1^2+1\}.
$$
By $\Gamma_0$ and $\Gamma_1$ we denote the lower and upper boundaries of $\Omega,$ respectively:
$$
\Gamma_0=\{(x_1,x_2)\colon x_2=x_1^2\},\qquad
\Gamma_1=\{(x_1,x_2)\colon x_2=x_1^2+1\}.
$$
The domain of our Bellman function will be the following set in $\mathbb{R}^3:$
$$
S=\{(x_1,x_2,L)\colon (x_1,x_2)\in\Omega, ~x_1\le L\}.
$$
We will often write $(x_1,x_2,L)$ as $(x,L)$ with $x\in\mathbb{R}^2.$ 

For every $(x,L)\in S$ and every $Q\in\mathcal{D}$ we designate a special subset of functions on $\rn$ whose restrictions to $Q$ are in $\BMO^d(Q);$ we will refer to its elements as the test functions:
\eq[b_test]{
\begin{aligned}
E_{x,L,Q}=\big\{\vf\in L^1_{loc}(\rn),&~\vf|_Q\in\BMO^d(Q),~ \|\vf\|_{\BMO^d(Q)}\le1,\\
&\av{\vf}Q=x_1,~\av{\vf^2}Q=x_2, \sup_{\cma{R\supset Q; R\in\mathcal{D}}} \av{\varphi}R=L\big\}.
\end{aligned}
}
It is an easy exercise to show that the set $E_{x,L,Q}$ is non-empty for any $Q\in\mathcal D$ and any $(x,L)\in S;$ in fact, one can construct an appropriate test function that would take at most two values on $Q$ and at most one other value on $\rn\setminus Q.$

Now we define the following {\it Bellman function} on $S$:
\eq[b_main]{
\bel{B}^n(x,L)=\sup \{\av{N\vf}Q\colon \varphi\in E_{x,L,Q}\}.
}
Various properties of this function can be derived directly from the definition, and we will do so below, in Section~\ref{properties}. (One immediate observation, by simple rescaling, is that $\bel{B}^n$ does not actually depend on the cube $Q.$) Definition~\eqref{b_test}--\eqref{b_main} combines two well-known Bellman formulations: the one for the dyadic maximal operator on $L^2$ given in~\cite{nt}, and the one for a general integral functional on $\BMO$ with the square norm, first given in~\cite{sv1} and then fully developed in~\cite{iosvz2} and subsequent work.

The Bellman function defined by Nazarov and Treil in~\cite{nt} was not computed in that paper. That was done by Melas in~\cite{melas} (for all $L^p,$ $p>1$). Melas's computation relied on a careful analysis of combinatorial properties of the operator. A different, PDE-based approach was implemented in~\cite{ssv}; it is also the one we employ here (see Section~\ref{how_to} for details).

The reader may be wondering why in defining the Bellman function for the $\BMO\to\BLO$ action of the operator $N$ we did not fix $\inf_Q N\varphi.$ The following simple observation provides the answer.
\begin{lemma}
\label{key_obs}
Take $\alpha\in(0,\frac12], $ and let $\T$ be an $\alpha$-tree on a measure space $(X,\mu).$ Take any $\varphi\in L^1(X)$ and any $I\in\T.$ \cma{Let 
$$
L=\sup\{\avm{\varphi}R\colon R,\,R\in\T,\,R\supset I\}.
$$ 
}
Then
$$
L=\inf_IN_\T\varphi.
$$
\end{lemma}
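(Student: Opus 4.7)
The plan is to prove the two inequalities $\inf_I N_\T\varphi \ge L$ and $\inf_I N_\T\varphi \le L$ separately, with the first being immediate from the definitions and the second handled by a standard stopping-time/maximal-element argument on the tree.

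For the lower bound $\inf_I N_\T\varphi \ge L$, I would simply observe that for every $x \in I$ and every $R \in \T$ with $R \supset I$, we have $x \in R$, so $N_\T\varphi(x) \ge \avm{\varphi}{R}$. Taking the supremum over all such $R$ gives $N_\T\varphi(x) \ge L$ pointwise on $I$, and hence on the essential infimum.

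For the upper bound, I would argue by contradiction. Assume $\inf_I N_\T\varphi > L$; then $L < \infty$ and there exists $\delta > 0$ such that $N_\T\varphi(x) > L + \delta$ for $\mu$-a.e.\ $x \in I$. For each such $x$, pick $J(x) \in \T$ with $x \in J(x)$ and $\avm{\varphi}{J(x)} > L + \delta$. Because any two elements of a tree are either nested or disjoint, $J(x)$ is comparable with $I$; the option $J(x) \supseteq I$ would force $\avm{\varphi}{J(x)} \le L$, which is incompatible with $L + \delta < \avm{\varphi}{J(x)}$. Thus $J(x) \subsetneq I$. Let
$$
\mathcal{S} = \{J \in \T : J \subsetneq I, \ \avm{\varphi}{J} > L + \delta\}.
$$
Since $I \notin \mathcal{S}$ (because $\avm{\varphi}{I} \le L$) and every element of $\mathcal{S}$ has a finite chain of ancestors in $\T(I)$ terminating at $I$, each $J \in \mathcal{S}$ is contained in a unique maximal element $J^* \in \mathcal{S}$. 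Let $\mathcal{S}^*$ be the collection of these maximal elements; by maximality and the tree property, they are pairwise disjoint up to $\mu$-null sets, and by construction they cover $I$ up to $\mu$-null.

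The contradiction then comes from integrating:
$$
L\,\mu(I) \ge \int_I \varphi\,d\mu = \sum_{J \in \mathcal{S}^*} \int_J \varphi\,d\mu > (L+\delta) \sum_{J \in \mathcal{S}^*} \mu(J) = (L+\delta)\,\mu(I),
$$
which is impossible since $\mu(I) > 0$. The only nontrivial point is justifying that maximal elements of $\mathcal{S}$ exist and partition $I$ up to null; this is a standard stopping argument that uses only the nesting property of $\T$, the definition $\T = \bigcup_m \T_m$ (so ascending chains from any $J \in \T(I)$ to $I$ are finite), and the fact that $I \notin \mathcal{S}$. No use of the differentiation property or the constant $\alpha$ is needed, so the lemma is in fact a purely combinatorial fact about trees.
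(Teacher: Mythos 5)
Your proposal is correct and follows essentially the same approach as the paper: both prove the easy inequality directly from the definition and then argue by contradiction using a stopping-time decomposition of $I$ into maximal tree elements $J\subsetneq I$ on which the average exceeds $L$, deriving the contradiction by summing over that cover. The only cosmetic difference is that you introduce a uniform gap $\delta>0$ before stopping, whereas the paper stops at the threshold $L$ itself; your version sidesteps the small point about passing strict termwise inequalities to a strict inequality for the infinite sum, which the paper handles implicitly.
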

\begin{proof}
The inequality $L\le \inf_IN_\T\varphi$ is obvious. To show the converse, assume that $L<\inf_IN_\T\varphi.$ Then $\mu$-almost every point of $I$ lies in some maximal subset $J\subset I$ such that $J\in\T$ and $\avm{\varphi}J>L.$ Let $\{J_k\}$ be the collection of these maximal tree elements; they \cma{cover $I$ and} are disjoint up to measure zero. Therefore,
$$
\avm{\varphi}I=\frac1{\mu(I)}\,\sum_k \mu(J_k) \avm{\varphi}{J_k}> \frac1{\mu(I)}\,\sum_k \mu(J_k) L=L,
$$
which is a contradiction since $\avm{\varphi}I\le L$ by the definition of $L.$
\end{proof}
\begin{remark}
Since it was first done in~\cite{nt}, fixing the ``external maximal function'' $L$ when defining Bellman functions for the dyadic maximal operator has become canonical. The result of Lemma~\ref{key_obs} makes this approach particularly advantageous for settings where the infimum of the maximal function is involved, such as $\BLO$ and $A_1.$ Note that this result also holds with the usual maximal operator $M$ in place of $N.$ However, this equality is false in general for the usual (non-dyadic) Hardy--Littlewood maximal operator.
\end{remark}

In light of Lemma~\ref{key_obs}, to show that $N$ maps $\BMO^d$ into $\BLO^d,$ it is necessary and sufficient to show that
$$
\bel{B}^n(x,L)\le L+c
$$
for some finite $c.$ Furthermore, the best function $\Phi$ from Theorem~\ref{main_th} is given by
\eq[est1]{
\Phi_n(t)=\sup_{(x,L)\in S;~L-x_1=t}\big(\bel{B}^n(x,L)-L\big).
}

\subsection{Properties of the Bellman function}
\label{properties}
Let us make three basic observations about the function $\bel{B}^n.$ First, we \cma{have} an {\it a priori} boundary condition for $\bel{B}^n.$
\begin{lemma}
\label{bc}
For all $x_1\le L,$
$$
\bel{B}^n(x_1,x_1^2,L)=L.
$$
\end{lemma}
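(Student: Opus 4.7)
The key observation is that $(x_1,x_1^2)\in\Gamma_0$, so the point lies on the lower parabolic boundary of $\Omega$. For any test function $\varphi\in E_{x_1,x_1^2,Q}$, the defining conditions give $\av{\varphi^2}Q-\av{\varphi}Q^2=x_1^2-x_1^2=0$. Since the average of $(\varphi-x_1)^2$ over $Q$ vanishes, we must have $\varphi\equiv x_1$ almost everywhere on $Q$. In particular, every subcube $J\in\mathcal{D}(Q)$ satisfies $\av{\varphi}J=x_1$, so the $\BMO^d(Q)$ constraint $\|\varphi\|_{\BMO^d(Q)}\le 1$ is automatic.

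The next step is to compute $N\varphi(x)$ for a.e.\ $x\in Q$. The dyadic cubes containing $x$ form a totally ordered chain under inclusion, which splits naturally into subcubes of $Q$ (including $Q$ itself) and proper dyadic ancestors $R\supset Q$. Subcubes $J\subset Q$ containing $x$ yield $\av{\varphi}J=x_1\le L$, while by the definition of $E_{x_1,x_1^2,Q}$ we have $\sup_{R\supset Q,\,R\in\mathcal{D}}\av{\varphi}R=L$. Taking the supremum over the whole chain therefore gives $N\varphi(x)=L$ for a.e.\ $x\in Q$, and hence $\av{N\varphi}Q=L$.

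This shows $\av{N\varphi}Q=L$ for every admissible $\varphi$, whence $\bel{B}^n(x_1,x_1^2,L)\le L$. To get equality, we invoke the non-emptiness remark made just after~\eqref{b_main}: for the boundary data $(x_1,x_1^2,L)$ with $x_1\le L$, one can explicitly construct a test function taking the value $x_1$ on $Q$ and a single appropriate larger value on $\rn\setminus Q$ (chosen so that the average over the parent cube of $Q$ equals $L$ and larger ancestors do not exceed $L$), which is admissible and achieves $\av{N\varphi}Q=L$. I do not anticipate any real obstacle here; the only mildly delicate point is checking that the external constant on $\rn\setminus Q$ can be arranged so that $\sup_{R\supset Q}\av{\varphi}R$ is exactly $L$ rather than strictly larger, which is straightforward since the averages over successive ancestors form a convex combination of $x_1$ and the chosen external value.
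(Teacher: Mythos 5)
Your argument is correct and is essentially the paper's proof, just written out in more detail: you observe that the Bellman point lies on $\Gamma_0$, so the zero variance forces $\varphi \equiv x_1$ a.e.\ on $Q$, and then the defining constraint $\sup_{R\supset Q}\av{\varphi}R = L \ge x_1$ gives $N\varphi \equiv L$ on $Q$, hence $\av{N\varphi}Q = L$; the non-emptiness of $E_{(x_1,x_1^2),L,Q}$ turns this into equality.
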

\begin{proof}
Every element $\varphi$ of $E_{(x_1,x_1^2),Q,L}$ \cma{almost everywhere on $Q$ takes the constant value $x_1\le L,$} thus, $N\varphi|_{Q}=\inf_{Q}N\varphi=L.$
\end{proof}

Second, we show that $\bel{B}^n$ possesses a \cma{special} restricted concavity on $S.$
\begin{lemma}
\label{mi}
Take $x^-,x^+\in\Omega.$ Let $x=(1-2^{-n})x^-+2^{-n}x^+$ and assume that $x\in\Omega.$ Take any $L\ge x_1$ and let $L^\pm=\max\{x_1^\pm,L\}.$ Then
\eq[mi1]{
\bel{B}^n(x,L)\ge (1-2^{-n})\bel{B}^n(x^-,L^-)+2^{-n}\bel{B}^n(x^+,L^+).
}
\end{lemma}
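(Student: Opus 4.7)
The natural strategy is the standard Bellman--function concatenation: splice near-optimal test functions for the endpoints $(x^\pm,L^\pm)$ onto the $2^n$ dyadic children of a single cube $Q$ to build a near-optimal test function for $(x,L).$ Fix $\varepsilon>0$ and any $Q\in\mathcal{D},$ and pick $\vf^\pm\in E_{x^\pm,L^\pm,Q}$ with $\av{N\vf^\pm}Q\ge\bel{B}^n(x^\pm,L^\pm)-\varepsilon,$ which is possible by the definition of $\bel{B}^n$ and the non-emptiness of $E_{x^\pm,L^\pm,Q}$ noted right after~\eqref{b_test}. Partition $Q$ into its $2^n$ dyadic children, designate one of them as $Q_+$ (so $|Q_+|/|Q|=2^{-n}$), and write $Q_-$ for the union of the remaining $2^n-1$ children.

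Define the candidate $\vf$ by placing, on $Q_+,$ the affine rescaling of $\vf^+|_Q$ under the dilation that takes $Q$ to $Q_+,$ and, on each other child, the corresponding rescaling of $\vf^-|_Q;$ outside $Q,$ set $\vf\equiv L.$ Linearity of averaging then gives $\av{\vf}Q=(1-2^{-n})x_1^-+2^{-n}x_1^+=x_1$ and $\av{\vf^2}Q=x_2,$ while the constant extension forces $\sup_{R\in\mathcal{D},\,R\supset Q}\av{\vf}R=L.$ To check $\|\vf\|_{\BMO^d(Q)}\le1,$ I inspect the variance on every dyadic subcube of $Q:$ on a strict subcube of a child it equals, by rescaling, the variance of the corresponding $\vf^\pm$ on a subcube of $Q$ and so is $\le1;$ on a child it equals $x_2^\mp-(x_1^\mp)^2\le1$ since $x^\pm\in\Omega;$ and on $Q$ itself it equals $x_2-x_1^2\le1$ exactly because $x\in\Omega$ by hypothesis. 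Thus $\vf\in E_{x,L,Q}.$

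The computational heart is to identify $N\vf$ on each child with $N\vf^\pm$ pulled back by the rescaling. For $y\in Q_+,$ every dyadic cube containing $y$ either sits in $Q_+,$ where $\vf$ is (a rescaling of) $\vf^+,$ or contains $Q_+,$ contributing at most $L$ to $N\vf(y)$ and at most $L^+$ to $N\vf^+$ of the corresponding point. These two exterior contributions are reconciled precisely by the prescription $L^+=\max\{x_1^+,L\}:$ if $L^+=L$ they agree, and otherwise $L^+=x_1^+>L$ is already attained as the average of $\vf^+$ over the interior cube $Q_+$ itself, so it is absorbed on both sides. An analogous argument on $Q_-,$ using $L^-=\max\{x_1^-,L\}$ together with $\av{\vf}{Q_j}=x_1^-$ on each child $Q_j\subset Q_-,$ gives $N\vf=N\vf^\pm$ pointwise on $Q_\pm$ under the rescaling. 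Averaging by scale invariance,
$$
\av{N\vf}Q=(1-2^{-n})\av{N\vf^-}Q+2^{-n}\av{N\vf^+}Q\ge(1-2^{-n})\bel{B}^n(x^-,L^-)+2^{-n}\bel{B}^n(x^+,L^+)-\varepsilon.
$$
Since $\bel{B}^n(x,L)\ge\av{N\vf}Q$ by definition and $\varepsilon>0$ was arbitrary, \eqref{mi1} follows.

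The only delicate step is the pointwise matching of $N\vf$ with $N\vf^\pm$ on the children: it relies precisely on the choice $L^\pm=\max\{x_1^\pm,L\}$ rather than a common $L,$ since otherwise the external-supremum terms in $N\vf^+$ and $N\vf$ would fail to line up on $Q_+$ whenever $x_1^+>L,$ and similarly on each component of $Q_-.$ Everything else is routine bookkeeping with averages, BMO variances, and the definition of the parabolic domain $\Omega.$
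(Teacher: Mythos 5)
Your proof is correct and follows essentially the same splicing construction as the paper's: rescaled near-optimal test functions for $(x^+,L^+)$ go on one first-generation dyadic child of $Q,$ those for $(x^-,L^-)$ on the remaining $2^n-1$ children, with constant extension $L$ outside $Q.$ The paper dispatches the identity $\av{N\vf}Q=(1-2^{-n})\av{N\vf^-}Q+2^{-n}\av{N\vf^+}Q$ simply ``by construction,'' while you spell out the role of $L^\pm=\max\{x_1^\pm,L\}$ in reconciling the external suprema on each child --- the same mechanism the paper relies on implicitly.
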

\begin{proof}
Fix $Q\in\mathcal{D}.$ Let $\{\varphi_j^\pm\}$ be a sequence of functions from $E_{x^\pm,L^\pm,Q}$ such that
$$
\av{N\varphi_j^\pm}Q\to \bel{B}^n(x^\pm,L^\pm).
$$
Let $\{Q_k\}_{k=1}^{2^n}$ be the dyadic subcubes of $Q$ of the first generation. Define a new sequence $\{\varphi_j\}$ on $\rn$  as follows: for each $j,$ on $Q_k,$ $1\le k\le 2^n-1,$ let $\varphi_j$ be $\varphi^-_j|_{Q}$ rescaled to $Q_k;$ on $Q_{2^n},$ let $\varphi_j$ be $\varphi^+_j|_Q$ rescaled to $Q_{2^n};$ and on $\rn\setminus Q,$ let $\varphi_j=L.$ By construction, each $\varphi_j\in E_{x,L,Q}.$
Furthermore,
$$
\bel{B}^n(x,L)\ge\av{N\varphi_j}Q=(1-2^{-n})\av{N\varphi^-_j}Q+2^{-n}\av{N\varphi^+_j}Q.
$$
Since the right-hand side converges to $(1-2^{-n})\bel{B}^n(x^-,L^-)+2^{-n}\bel{B}^n(x^+,L^+)$ as $j\to\infty,$ the proof is complete.
\end{proof}
Our third observation concerns the additive homogeneity of the function $\bel{B}^n.$ Consider the following translation operator on $\mathbb{R}^2$: for $a\in\mathbb{R},$ let
\eq[shift]{
T_a(x_1,x_2)=(x_1-a,x_2-2ax_1+a^2)\cma{.}
}
Clearly, for any $a,$ $T_a$ is a bijection on $\Omega.$
\begin{lemma}
\label{homogeneity}
$$
\bel{B}^n(x,L)=L+\bel{B}^n(T_{L}x,0).
$$
\end{lemma}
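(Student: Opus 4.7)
The plan is to exploit the fact that all data defining $E_{x,L,Q}$ and the quantity $\av{N\vf}Q$ transform in an affine, predictable way when $\vf$ is shifted by an additive constant. Concretely, the $\BMO^d$ seminorm is invariant under addition of constants, averages shift linearly, the map $a\mapsto T_a$ is exactly the induced action on $(\av{\vf}Q,\av{\vf^2}Q)$, and the operator $N$ commutes with additive constants: $N(\vf+c)=N\vf+c$ pointwise. Combining these observations, the map $\vf\mapsto \vf-L$ should give a bijection between $E_{x,L,Q}$ and $E_{T_Lx,0,Q}$ that changes $\av{N\vf}Q$ by exactly $-L$.

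First I would fix $Q\in\mathcal D$ and $(x,L)\in S$, take any $\vf\in E_{x,L,Q}$, and set $\psi=\vf-L$. A direct computation gives $\av\psi Q=x_1-L=(T_Lx)_1$ and $\av{\psi^2}Q=x_2-2Lx_1+L^2=(T_Lx)_2$, while $\|\psi\|_{\BMO^d(Q)}=\|\vf\|_{\BMO^d(Q)}\le 1$. For the external condition I use
$$
\sup_{R\supset Q;~R\in\mathcal D}\av\psi R=\sup_{R\supset Q;~R\in\mathcal D}\av\vf R-L=L-L=0,
$$
so $(T_Lx,0)\in S$ and $\psi\in E_{T_Lx,0,Q}$. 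The identity $N\psi=N\vf-L$ holds pointwise, since $N$ is a supremum of averages and those averages all shift by $L$; consequently $\av{N\psi}Q=\av{N\vf}Q-L$.

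Taking supremum over $\vf\in E_{x,L,Q}$ and applying the definition of $\bel{B}^n$ yields
$$
\bel{B}^n(x,L)-L\le \bel{B}^n(T_Lx,0).
$$
The reverse inequality is obtained by running the same argument on the inverse map $\psi\mapsto\psi+L$, which sends $E_{T_Lx,0,Q}$ back into $E_{x,L,Q}$ (all five defining conditions are again preserved under the verification above, using $L\ge x_1$ to stay inside $S$), and which raises $\av{N\cdot}Q$ by exactly $L$.

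There is essentially no obstacle here; the only point one must be careful about is the external condition $\sup_{R\supset Q}\av\vf R=L$, whose shift behavior must match the choice of $L^\prime=0$ on the right side and $L^\prime=L$ on the left side. Since the shift operators on test functions and on the point $x$ are mutually consistent by construction of $T_a$ in~\eqref{shift}, both directions go through and the lemma follows.
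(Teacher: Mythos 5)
Your proof is correct and follows essentially the same route as the paper's: shift the test function by the constant $L$, observe that this bijects $E_{x,L,Q}$ onto $E_{T_Lx,0,Q}$ and that $N$ commutes with additive constants, then take the supremum. The paper's version is slightly more general in that it shifts by an arbitrary $a$ and then sets $a=L$, but the content is identical.
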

\begin{proof}
If $\vf\in E_{x,L,Q},$ then $\tilde\vf:=\vf-a\in E_{\tilde x,\tilde L,Q},$ where $(\tilde x,\tilde L)=(T_a x, L-a).$ Since $N\tilde\vf=N\vf-a$, we get 
$\bel{B}^\cma{n}(\tilde x,\tilde L)=\bel{B}^\cma{n}(x,L)-a.$ Now, take $a=L$. 
\end{proof}

According to Lemma~\ref{homogeneity}, we can rewrite \eqref{est1} as follows:
\eq[est2]{
\Phi_n(t)=\sup_{x\in\Omega;~x_1=-t}\bel{B}^n(x,0).
}

\subsection{$\cma{\boldsymbol{\alpha}}$-concavity and Bellman induction} 
We will need the following definition from~\cite{alpha_trees}:
\begin{definition}
\label{def}
If $\alpha\in\big(0,\frac12\big],$ a function $F$ on $\Omega$ is called $\alpha$-concave if
\eq[601]{
F((1-\beta) x^-+\beta x^+)\ge (1-\beta)F(x^-)+\beta F(x^+),\\
}
for any $\beta\in\big[\alpha,\frac12\big]$ and any two points $x^\pm\in\Omega$ such that $(1-\beta) x^-+\beta x^+\in\Omega.$
\end{definition}

We will also need a simple lemma whose elementary proof can be found in~\cite{alpha_trees}. (Specifically, this is the first step in the proof of Lemma~2.4 of that paper.)
\begin{lemma}
\label{one_step}
Take $\alpha\in(0,1/2]$ and let $\T$ be an $\alpha$-tree on a measure space $(X,\mu).$ Let $\varphi\in\BMO(\T)$. If $F$ is an $\alpha$-concave function on $\Omega,$ then for any $I\in\T,$
\eq[ind]{
F\big(\avm{\varphi}I,\avm{\varphi^2}I\big)\ge\frac1{\cma{\mu(I)}}\sum_{J\in\T_1(I)}\cma{\mu(J)}\,F\big(\avm{\varphi}J,\avm{\varphi^2}J\big).
}
\end{lemma}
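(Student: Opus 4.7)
The plan is to set up an abstract Jensen-type inequality for $\alpha$-concave functions and prove it by induction on the number $m$ of children. Writing $\lambda_k = \mu(J_k)/\mu(I)$ for $J_1,\dots,J_m \in \T_1(I)$ and $x^{(k)} = (\avm{\varphi}{J_k}, \avm{\varphi^2}{J_k})$, linearity of averages gives $(\avm{\varphi}I, \avm{\varphi^2}I) = \sum_k \lambda_k x^{(k)}$. After the harmless normalization $\|\varphi\|_{\BMO(\T)} \le 1$ (so that the $x^{(k)}$ and their convex combination all lie in $\Omega$), the lemma reduces to the following claim: for any $y^{(1)},\dots,y^{(m)} \in \Omega$ and $\mu_1,\dots,\mu_m \in [\alpha, 1]$ with $\sum_k \mu_k = 1$ and $p := \sum_k \mu_k y^{(k)} \in \Omega$, one has $F(p) \ge \sum_k \mu_k F(y^{(k)})$. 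The base case $m=1$ is trivial.

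For the inductive step with $m \ge 2$, the plan is to \emph{peel off} one well-chosen index $k$. Set $q_k := (p - \mu_k y^{(k)})/(1-\mu_k)$, the weighted average of the remaining points. Since $\sum_{j\ne k}\mu_j \ge (m-1)\alpha \ge \alpha$, we have $\mu_k \le 1-\alpha$, hence $\min(\mu_k, 1-\mu_k) \in [\alpha, 1/2]$. Provided $q_k \in \Omega$, the $\alpha$-concavity of $F$ then yields $F(p) \ge \mu_k F(y^{(k)}) + (1-\mu_k) F(q_k)$. Applying the inductive hypothesis to $q_k$, which is a convex combination of $m-1$ points with weights $\mu_j/(1-\mu_k) \ge \alpha$ summing to $1$, produces $F(q_k) \ge \sum_{j\ne k}(\mu_j/(1-\mu_k)) F(y^{(j)})$, and combining the two estimates closes the induction.

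The main obstacle is ensuring that \emph{some} index $k$ yields $q_k \in \Omega$. Writing $v_z := z_2 - z_1^2$, the bound $v_{q_k} \ge 0$ is automatic from the convexity of $t \mapsto t^2$ applied to the first coordinates, so the real task is guaranteeing $v_{q_k} \le 1$ for at least one $k$. A direct calculation yields
$$
v_{q_k} = \frac{v_p - \mu_k v_{y^{(k)}}}{1-\mu_k} - \frac{\mu_k(y^{(k)}_1 - p_1)^2}{(1-\mu_k)^2},
$$
from which one sees that $v_{q_k} \le 1$ is equivalent to $A_k \le B_k$, where $A_k = \mu_k[(1-\mu_k)(1-v_{y^{(k)}}) - (y^{(k)}_1 - p_1)^2]$ and $B_k = (1-\mu_k)(1-v_p)$. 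Using the identity $\sum_k \mu_k(y^{(k)}_1 - p_1)^2 = v_p - \sum_k \mu_k v_{y^{(k)}}$, a short algebraic manipulation gives
$$
\sum_k (A_k - B_k) = (1-v_p)(2-m) - \sum_k \mu_k^2(1-v_{y^{(k)}}),
$$
which is nonpositive for $m \ge 2$ since $v_p \le 1$ and $v_{y^{(k)}} \le 1$. Pigeonhole then produces at least one index $k$ with $A_k \le B_k$, i.e., $v_{q_k} \le 1$, allowing us to peel off that $k$ and close the induction.
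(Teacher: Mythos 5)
Your proof is correct. The paper does not supply its own proof of this lemma\,---\,it cites Lemma~2.4 of Slavin--Vasyunin's $\alpha$-trees paper\,---\,so a line-by-line comparison is not possible here, but your argument is a complete, self-contained replacement. The route you take is the natural one for this kind of statement: normalize, reduce to an abstract Jensen-type inequality for $\alpha$-concave functions with weights $\ge\alpha$, and prove it by inducting on the number of children, peeling one off at a time via the two-point $\alpha$-concavity inequality. The only genuinely delicate point is guaranteeing that the recentered average $q_k$ of the remaining children lands in $\Omega$ for at least one $k$. I checked your computation
$$
v_{q_k}=\frac{v_p-\mu_k v_{y^{(k)}}}{1-\mu_k}-\frac{\mu_k\big(y^{(k)}_1-p_1\big)^2}{(1-\mu_k)^2},\qquad
\sum_k(A_k-B_k)=(1-v_p)(2-m)-\sum_k\mu_k^2\big(1-v_{y^{(k)}}\big),
$$
and both identities are right; together with $v_p\le1$ and $v_{y^{(k)}}\le1$ this gives the pigeonhole for every $m\ge2,$ so the induction closes. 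Two minor items worth stating explicitly in a final write-up: (i) the implicit hypothesis $\|\varphi\|_{\BMO(\T)}\le1,$ which is needed for the points $(\avm{\varphi}{J},\avm{\varphi^2}{J})$ and $(\avm{\varphi}{I},\avm{\varphi^2}{I})$ to lie in $\Omega$ (and hence for $F$ to be defined at them), is what is meant in the paper and is how the lemma is used in the Bellman induction; (ii) after peeling off the pigeonhole index, the rescaled weights $\mu_j/(1-\mu_k)$ with $j\ne k$ again lie in $[\alpha,1],$ so the inductive hypothesis applies to $q_k$; you indicate this, and it is immediate since $\mu_j\ge\alpha$ and, for $m\ge3,$ $\mu_j+\mu_k<1.$
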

Our next lemma shows how $\alpha$-concave functions can be used to bound the functional $\avm{N_\T\varphi}K$ for $K\in\T$ in terms of $\avm{\varphi}K,$ $ \avm{\varphi^2}K,$ $\inf_KN_\T\varphi,$ and (implicitly) $\|\varphi\|_{\BMO(\T(K))}.$ The process it implements is commonly referred to as Bellman induction.
\begin{lemma}
\label{induction}
Fix $\alpha\in\cma{(}0,1/2],$ and let $\T$ be an $\alpha$-tree on a measure space $(X,\mu).$  Let $\{A(\,\cdot\,;L)\}_{L\in\mathbb{R}},$ be a family of functions on $\Omega,$ such that for each $L,$ 
\ben[label=(\arabic*),font=\upshape]
\item
\label{cond1}
$A(\,\cdot\,;L)$ is $\alpha$-concave on $\Omega.$ 
\item
\label{cond2}
For all \cma{$x_1\ge L,$} $A(x; L)=A(x; x_1).$
\item
\label{cond3}
For all $x_1\le L,$ $A(x; L)\ge L.$
\een
Take $K\in\T$ and any function $\varphi\in L^1_{loc}(X)$ such that $N_\T\varphi$ is not identically infinite on $K,$ $\varphi|_K\in\BMO\big(\T(K)\big)$ and $\|\varphi\|_{\BMO(\T(K))}\le 1.$ 
Then 
\eq[direct]{
\avm{N_\T\varphi}{K} \le A(\avm{\varphi}K,\avm{\varphi^2}K;\,\inf_KN_\T\varphi).
}

\end{lemma}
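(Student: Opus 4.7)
The plan is to perform a Bellman induction over the sub-tree $\T(K),$ carrying the auxiliary parameter $L$ along as the maximal-function bound. For each $J\in\T(K)$ I set
$$
x_J:=(\avm{\vf}{J},\avm{\vf^2}{J}),\quad L_J:=\sup\{\avm{\vf}{R}\colon R\in\T,\ R\supseteq J\},\quad b_J:=A(x_J;L_J).
$$
Since $R=J$ is admissible in the defining supremum, one always has $L_J\ge(x_J)_1,$ so hypothesis \ref{cond3} yields the uniform lower bound $b_J\ge L_J\ge L_K.$ By Lemma \ref{key_obs}, $L_K=\inf_K N_\T\vf,$ so the target inequality \eqref{direct} reduces to $b_K\ge\avm{N_\T\vf}{K}.$

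The first key observation is the identity $b_I=A(x_I;L_J)$ for every $J\in\T(K)$ and every child $I\in C(J).$ Indeed, $L_I=\max(L_J,(x_I)_1)$: when $(x_I)_1\le L_J$ this gives $L_I=L_J$ directly, and when $(x_I)_1>L_J$ hypothesis \ref{cond2} applied at $L_J$ gives $A(x_I;L_J)=A(x_I;(x_I)_1)=A(x_I;L_I)=b_I.$ With this in hand I apply Lemma \ref{one_step} to the $\alpha$-concave function $F(\cdot):=A(\cdot;L_J)$ on $\Omega,$ noting that $\vf|_J\in\BMO(\T(J))$ has norm at most $1,$ to obtain the one-generation descent
$$
\mu(J)\,b_J\ge\sum_{I\in C(J)}\mu(I)\,A(x_I;L_J)=\sum_{I\in C(J)}\mu(I)\,b_I.
$$
Iterating from $K$ through $k$ generations produces $\mu(K)\,b_K\ge\int_K f_k\,d\mu,$ where $f_k(x):=b_{J_k^x}$ is constant on each atom of $\T_k(K).$

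It remains to pass to the limit $k\to\infty.$ For $\mu$-a.e.\ $x\in K,$ the differentiation property forces $x_{J_k^x}\to(\vf(x),\vf^2(x))\in\Gamma_0,$ and a short nested-tree argument shows that $L_{J_k^x}$ increases to $N_\T\vf(x)$: any $R\in\T$ with $R\ni x$ is eventually an ancestor of $J_k^x,$ so its average is eventually captured by $L_{J_k^x}.$ Combined with $b_J\ge L_J$ from hypothesis \ref{cond3}, this yields $\liminf_k f_k(x)\ge N_\T\vf(x)$ almost everywhere; since $f_k\ge L_K$ uniformly, Fatou's lemma delivers $\int_K N_\T\vf\,d\mu\le\mu(K)\,b_K,$ which is \eqref{direct}.

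The only point of delicacy I anticipate is this final limit step: one might initially expect to need continuity of $A$ near the parabolic boundary $\Gamma_0$ in order to evaluate $\lim_k b_{J_k^x},$ and such continuity is not automatic from $\alpha$-concavity alone. The way around this is to never attempt a two-sided limit; hypothesis \ref{cond3} already supplies the one-sided bound $b_J\ge L_J$ for free, and that is exactly what the Fatou step consumes.
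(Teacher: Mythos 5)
Your proof is correct and follows essentially the same Bellman-induction route as the paper: one-generation descent via Lemma~\ref{one_step} and conditions (1)--(2), the lower bound $b_J\ge L_J$ from (3), identification of $L_K$ with $\inf_K N_\T\varphi$ via Lemma~\ref{key_obs}, and a limit passage using $L_{J^x_k}\uparrow N_\T\varphi(x)$. The only cosmetic difference is in the final step, where you apply Fatou to $f_k-L_K\ge 0$ directly, while the paper rewrites the $m$-th generation sum as $\avm{N_\T(\varphi_m)}K$ for the conditional expectations $\varphi_m$ and invokes monotone convergence; the underlying pointwise monotonicity is the same.
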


\begin{proof}
For all $J\in\T(K),$ let us write 
$$
P_J(\varphi)=(\avm{\varphi}J,\avm{\varphi^2}J\big),\qquad L_J(\varphi)=\sup_{J\subset R\in\cma{\T}}\avm{\varphi}R.
$$  
Note that if $I\in\T(K)$ and $J\in\T_1(I),$ then $L_J(\varphi)=\max\{L_I(\varphi), \avm{\varphi}J\}.$

Fix an $L\in\mathbb{R}.$ 
Using Lemma~\ref{one_step} in conjunction with property~\ref{C1} of $A$, and then property~\ref{C2} of $A,$ repeating this process $m$ times, and finally applying property~\ref{C3}, we obtain
\begin{align*}
A(P_K(\varphi); L_K(\varphi))
&\ge\frac1{\cma{\mu(K)}}\sum_{J\in\T_1(K)}\cma{\mu(J)}\,A(P_J(\varphi); L_K(\varphi))
=\frac1{\cma{\mu(K)}}\sum_{J\in\T_1(K)}\cma{\mu(J)}\,A(P_J(\varphi); L_J(\varphi))\\
&\ge \frac1{\cma{\mu(K)}}\sum_{J\in\T_1(K)}\sum_{\cma{I}\in\T_1(J)}\cma{\mu(I)}\,A(P_\cma{I}(\varphi); L_\cma{I}(\varphi))
=\frac1{\cma{\mu(K)}}\sum_{J\in\T_2(K)}\cma{\mu(J)}\,A(P_J(\varphi); L_J(\varphi))\\
&...\\
&\ge \frac1{\cma{\mu(K)}}\sum_{J\in\T_m(K)}\cma{\mu(J)}\,A(P_J(\varphi); L_J(\varphi))\ge\frac1{\cma{\mu(K)}}\sum_{J\in\T_m(K)} \cma{\mu(J)}\,L_J(\varphi).
\end{align*}
Now, \cma{for $m\ge0,$ let $\varphi_m$ be the conditional expectation with respect to the $\sigma$-algebra generated by $\T_m(K),$ i.e.,}
$$
\varphi_m=\mathbb{E}(\varphi|\T_m(K))=\sum_{J\in\T_m(K)}\avm{\varphi}J\,\chi^{}_J.
$$
Then, for every $J\in\T_m(K),$ $L_J(\varphi)$ is the constant value of $N_{\T}(\varphi_m)$ on $J.$ Therefore, we have
$$
A(P_K(\varphi); L_K(\varphi))\ge \avm{N_\T(\varphi_m)}K\cma{.}
$$
By Lemma~\ref{key_obs}, $L_K(\varphi)=\inf_KN_{\T}\varphi.$
Since $N_{\T}(\varphi_m)$ is increasing $\mu$-a.e. to $N_{\T}\varphi,$ inequality~\eqref{direct} follows by the monotone convergence theorem. 
\end{proof}

\subsection{The Bellman candidate}
\label{bell_c}
We now present a family $\{A(\,\cdot\,; L)\}_{L\in\mathbb{R}}$ satisfying the conditions of Lemma~\ref{induction}. As we will see shortly, it will suffice to specify only the member of this family corresponding to $L=0.$ To give our definition, we need to split $\Omega$ into a union of special subdomains.

Let 
\eq[tau]{
\tau=\frac1{\sqrt\alpha}-\sqrt\alpha,\qquad \cma{p_0=\frac12\sqrt\alpha+\frac1{2\sqrt\alpha}-1};\qquad \cma{p_k=p_0-k\tau},~\cma{k\ge1}.
}
\cma{Using the parabolic shift $T_a$ defined by~\eqref{shift}, we can write} $(p_k,p_k^2+1)=T_{(k-1)\tau}(p_1,p_1^2+1).$

Let
\eq[domains]{
\begin{aligned}
&\Omega_+=\{x\in\Omega\colon x_1\ge0\},\\
&\Omega_0=\{x\in\Omega\colon x_1\le0, x_2\le 1\},\\
&\cma{\Omega_1=\{x\in\Omega\colon 1\le x_2\le (2p_1+\tau)x_1-p_1^2-\tau p_1+1\}},\\
&\cma{\Omega_2=\{x\in\Omega\colon (2p_1+\tau)x_1-p_1^2-\tau p_1+1\le x_2\le-2\tau x_1-\tau^2+1\}},\\
&\Omega_{2k+1}=T_{k\tau}\Omega_1,~k\ge1,\\
&\Omega_{2k+2}=T_{k\tau}\Omega_2,~k\ge1.
\end{aligned}
}
Figure~\ref{regions_pic} shows the first several subdomains for $\alpha=\frac14.$
\begin{figure}[h]
\centering{
\includegraphics[width=16cm]{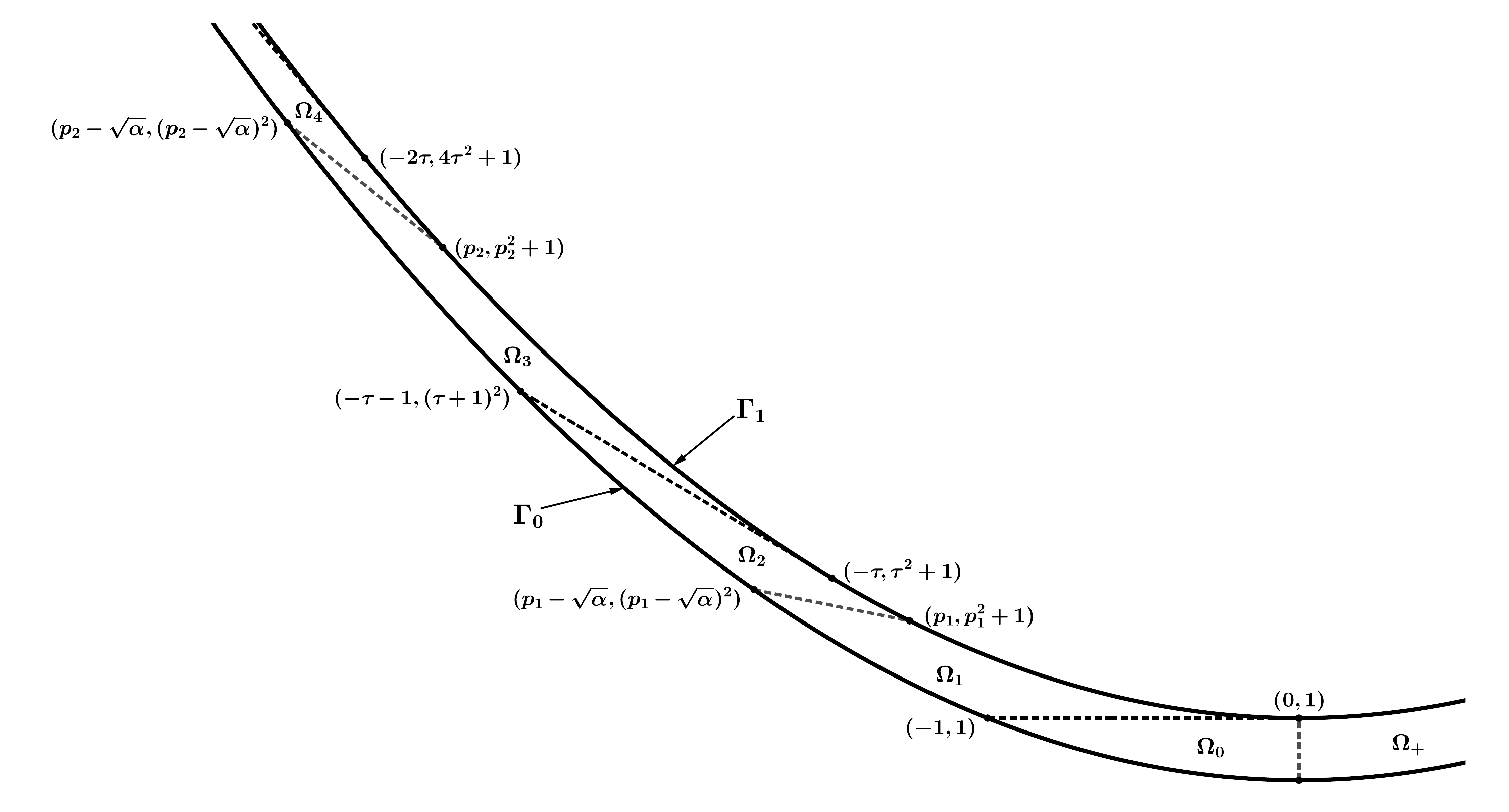}
\caption{The splitting $\Omega=...\cup\Omega_4\cup\Omega_3\cup\Omega_2\cup\Omega_1\cup\Omega_0\cup\Omega_+$ for $\alpha=\frac14$}
\label{regions_pic}
}
\end{figure}

We will also find it useful to write
$$
\Omega_-=\bigcup_{k=1}^\infty\Omega_k.
$$
In this notation,
$
\Omega=\Omega_-\cup\Omega_0\cup\Omega_+.
$

Next, we define a function $B$ on $\Omega$ that will be used to construct the family $\{A(\,\cdot\,; L)\}.$

\noindent In $\Omega_+,$ let 
\eq[d_+]{
B(x)=x_1+\sqrt{x_2-x_1^2}.
}
In $\Omega_0,$ let 
\eq[d_0]{
B(x)=x_1+\sqrt{x_2}.
}
To define $B$ in $\Omega_1,$ for each $s\in[\sqrt\alpha,1]$ let 
\eq[vu_1]{
v(s)=\frac12\Big[3s-\frac1s\Big]-1, \qquad u(s)=v(s)-s.
}
Consider the family of line segments $\{\ell_s\},$ where each $\ell_s$ connects the points $(u(s),u^2(s))$ and $(v(s),v^2(s)+1).$ It is easy to check that these segments foliate $\Omega_1,$ meaning each segment is contained in $\Omega_1$ and for every point $x\in\Omega_1$ there exists a unique number $s=s(x)\in[\sqrt\alpha,1]$ such that $x\in\ell_s.$
Now, let
\eq[d_1]{
B(x)=\frac s2\,(1+s^2)\,\frac{x_1-u}{v-u}=\frac12\,(1+s^2)(x_1-u).
}
Observe that $B$ is linear along the segment $\ell_s.$

To define $B$ in $\Omega_2,$ for each $s\in[\alpha,\sqrt\alpha]$ let 
\eq[vu_2]{
v(s)=\frac12\Big[\frac s\alpha+\frac\alpha s\Big]-\tau-1,\qquad  u(s)=v(s)-\frac\alpha s.
}
Again, consider the family of line segments $\{\ell_s\}$ connecting the points $(u(s),u^2(s))$ and $(v(s),v^2(s)+1).$ As before, it is easy to check that these segments foliate $\Omega_2.$ Let $s=s(x)$ be  the unique number in $[\alpha,\sqrt\alpha]$ such that $x\in\ell_s.$ Now, let
\cma{
\eq[d_2]{
B(x)=\frac s{2}\,\Big(1+\frac{\alpha^2}{s^2}\Big)\,\frac{x_1-u}{v-u}=\frac\alpha{2}\,\Big(1+\frac{s^2}{\alpha^2}\Big)\,(x_1-u).
}
}
Again, $B$ is linear along each $\ell_s.$

For $x\in\Omega_{2k+1}\cup\Omega_{2k+2}, k\ge1,$ \cma{we define $B$ using our parabolic shift:
\eq[d_3+]{
B(x)=\alpha^k\,B(T_{-k\tau}x).
}
The auxiliary functions are defined accordingly:}
\eq[s_3+]{
s(x)=\alpha^ks(T_{-k\tau}x),\quad v(s)=v\Big(\frac s{\alpha^k}\Big)-k\tau,\quad u(s)=u\Big(\frac s{\alpha^k}\Big)-k\tau.
}
Thus, we have $s(x)\in[\alpha^{k+\frac12},\alpha^{k}]$ if $x\in\Omega_{2k+1}$ and $s(x)\in[\alpha^{k+1},\alpha^{k+\frac12}]$ if $x\in\Omega_{2k+2}.$ Furthermore, with these definitions, every $x\in\Omega_{2k+1}\cup\Omega_{2k+2}$ lies on a unique segment $\ell_{s}$ connecting the points $(u,u^2)$ and $(v,v^2+1)$ and such that $B$ is linear along $\ell_s.$ \cma{These segments are mapped into each other by the parabolic shift:}
$$
\ell_s=T_{k\tau}\big(\ell_{s\alpha^{-k}}\big).
$$

Let us collect a few observations about the function $B.$ 
\begin{lemma}
\label{b_bound}
Let $b(p)=B(p,p^2+1).$
Then
\eq[bp]{
b(p)=
\begin{cases}
p+1,& p\ge 0;\\
\alpha^{k}\,f(p+k\tau+1),& p_{k+1}\le p\le -k\tau,~k\ge0;\\
\alpha^k(p+k\tau +1), & -k\tau\le p\le p_k,~k\ge1,
\end{cases}
}
where the function $f$ is given by
\eq[gy]{
f(y)=\frac1{27}\,\big(2y^3+2y^2\sqrt{y^2+3}+9y+6\sqrt{y^2+3}\big).
}
\end{lemma}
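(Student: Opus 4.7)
The plan is to evaluate $B$ along the upper parabola $\Gamma_1$ piece by piece, using the parabolic shift scaling \eqref{d_3+} to reduce everything to the three fundamental pieces $\Omega_+$, $\Omega_1$, $\Omega_2$. The starting observation is that the point $(p,p^2+1)$ lies in $\Omega_+$ exactly when $p\ge 0$, in $\Omega_1$ exactly when $p\in[p_1,0]$ (since the upper endpoints $(v(s),v(s)^2+1)$ of the foliating segments $\ell_s$, $s\in[\sqrt\alpha,1]$, sweep out precisely that piece of $\Gamma_1$), in $\Omega_2$ for $p\in[-\tau,p_1]$, and in $\Omega_{2k+1}\cup\Omega_{2k+2}$ for $p\in[-(k+1)\tau,-k\tau]$ in general. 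The first case is immediate from \eqref{d_+}: on $\Gamma_1$ we have $x_2-x_1^2=1$, so $b(p)=p+1$ for $p\ge 0$.

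For $p\in[p_1,0]$, I would use that $(p,p^2+1)$ is the upper endpoint of the unique $\ell_s$ with $v(s)=p$, so formula \eqref{d_1} evaluated at $x_1=v$ collapses to $B=\tfrac12 s(1+s^2)$. The equation $v(s)=p$ from \eqref{vu_1}, with the substitution $y=p+1$, becomes the quadratic $3s^2-2ys-1=0$; choosing the positive root gives $s=\tfrac13\bigl(y+\sqrt{y^2+3}\bigr)$. Cubing and collecting terms, $\tfrac12 s(1+s^2)=\tfrac12(s+s^3)$ simplifies to $\tfrac{1}{27}\bigl(2y^3+9y+(2y^2+6)\sqrt{y^2+3}\bigr)=f(y)$, which yields $b(p)=f(p+1)$ as claimed (this algebraic step is the one routine but slightly tedious computation in the proof).

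For $p\in[-\tau,p_1]$, the analogous argument uses that $(p,p^2+1)$ is the upper endpoint of the unique $\ell_s\subset\Omega_2$ from \eqref{vu_2}. Formula \eqref{d_2} at $x_1=v$ reduces, using $v-u=\alpha/s$, to $B=\tfrac12(s+\alpha^2/s)=\tfrac{\alpha}{2}\bigl(s/\alpha+\alpha/s\bigr)$. But the defining relation $v(s)=p$ is exactly $s/\alpha+\alpha/s=2(p+\tau+1)$, so this time no quadratic need be solved and $B=\alpha(p+\tau+1)$ drops out immediately, matching \eqref{bp} at $k=1$.

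To finish, I would invoke the self-similarity \eqref{d_3+}: a direct check gives $T_{-k\tau}(p,p^2+1)=(p+k\tau,(p+k\tau)^2+1)$, so $\Gamma_1$ is preserved and the shift acts on the $p$-coordinate by $p\mapsto p+k\tau$. Under this shift, $[p_{k+1},-k\tau]$ maps onto $[p_1,0]$ and $[-(k+1)\tau,p_{k+1}]$ onto $[-\tau,p_1]$; applying the two middle-case formulas just proved to the shifted argument and multiplying by $\alpha^k$ yields the second and third branches of \eqref{bp} for all $k\ge 0$ and $k\ge 1$ respectively. The only real work is the cubic simplification in the $\Omega_1$ case; the rest is bookkeeping of the foliation parameter $s$ on the upper parabola.
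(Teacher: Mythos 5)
Your proposal is correct and follows essentially the same route as the paper: evaluate $B$ on $\Gamma_1$ by identifying $(p,p^2+1)$ as the upper endpoint of the unique extremal segment $\ell_s$, solve the parametrization $v(s)=p$ for $s$ (a quadratic in $\Omega_1$, a trivial rearrangement in $\Omega_2$), and propagate to the remaining domains via the self-similarity~\eqref{d_3+}. The only difference is that you carry out the algebra the paper elides (solving $3s^2-2ys-1=0$ and expanding $\tfrac12 s(1+s^2)$ to reach $f$, and exploiting $v-u=\alpha/s$ in $\Omega_2$), and your verification that $T_{-k\tau}$ acts on $\Gamma_1$ simply by $p\mapsto p+k\tau$ and matches the intervals $[p_{k+1},-k\tau]\to[p_1,0]$, $[-(k+1)\tau,p_{k+1}]\to[-\tau,p_1]$ is exactly the bookkeeping the paper leaves implicit.
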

\begin{proof}
Let $P=(p,p^2+1).$ We have $P\in\Omega_-\cup\Omega_+.$ If $P\in\Omega_+,$ then \cma{by definition} $b(p)=p+1,$ as claimed on the first line of~\eqref{bp}.

If $P\in\Omega_1,$ \eqref{vu_1} and~\eqref{d_1} give
$
\cma{p=\frac12\,\big(3s-\frac1{s}\big)-1,}$ $b(p)=\frac12\,(s+s^3),
$
where $s=s(\cma{P}).$ It is easy to write $b$ explicitly in terms of $p.$ Specifically, we obtain $b(p)=f(p+1)$ with $f$ given by~\eqref{gy}. Taking into account~\eqref{d_3+}, we get the second line of~\eqref{bp}.

If $P\in\Omega_2,$ then $b$ is an affine function of $p.$ Therefore, the same is true in every $\Omega_{2k}.$ After a small bit of algebra, we see that for $k\ge1$ and $P\in\Omega_{2k},$
$
b(p)=\alpha^k(p+k\tau+1),
$
completing the proof.
\end{proof}

\begin{lemma}
\label{b_inc}
The function $B$ is increasing in $x_2.$ 
\end{lemma}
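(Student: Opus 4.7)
The plan is to verify monotonicity in $x_2$ on each piece of the decomposition~\eqref{domains} and then glue across interfaces by continuity, reducing the cases $k\ge 1$ to $k=0$ via the parabolic-shift invariance~\eqref{d_3+}. On $\Omega_+$ and $\Omega_0$ the claim is immediate from~\eqref{d_+} and~\eqref{d_0}: one has $\partial B/\partial x_2=1/(2\sqrt{x_2-x_1^2})$ and $\partial B/\partial x_2=1/(2\sqrt{x_2})$ respectively, both strictly positive on the interiors.

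For $\Omega_1$ (and analogously $\Omega_2$) I would apply the chain rule, treating the foliation parameter $s$ as an implicit function of $x_2$ at fixed $x_1$. In $\Omega_1$, eliminating $t$ via $t=(x_1-u(s))/s$ in the parametrization of $\ell_s$ yields $x_2=u(s)^2+2(x_1-u(s))(s-1)$; differentiating this identity and the formula $B=(1+s^2)(x_1-u)/2$ with respect to $s$ at fixed $x_1$ produces
$$
\frac{\partial x_2}{\partial s}=-\frac{(s^2+1)^2}{2s^3}+2(x_1-u),\qquad \frac{\partial B}{\partial s}=s(x_1-u)-\frac{(s^2+1)^2}{4s^2}.
$$
Using $x_1-u\le v-u=s$ together with the elementary inequality $(s^2+1)^2\ge 4s^4$ valid for $s\in[\sqrt{\alpha},1]$, both expressions are non-positive, so by the chain rule $\partial B/\partial x_2=(\partial B/\partial s)(\partial s/\partial x_2)\ge 0$ throughout $\Omega_1$. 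The same scheme applies to $\Omega_2$ using~\eqref{vu_2}--\eqref{d_2}; after carrying out the analogous differentiation, the decisive pointwise estimate reduces to $(s-\alpha)^2\ge 0$.

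For $k\ge 1$, the identity $B(x)=\alpha^k B(T_{-k\tau}x)$ from~\eqref{d_3+} transfers monotonicity: since $T_{-k\tau}(x_1,x_2)=(x_1+k\tau,\,x_2+2k\tau x_1+(k\tau)^2)$ acts on $x_2$ (at fixed $x_1$) as an order-preserving affine bijection of slope $1$, and $\alpha^k>0$ preserves sign, the conclusion on $\Omega_1\cup\Omega_2$ propagates to every $\Omega_{2k+1}\cup\Omega_{2k+2}$. A final continuity check for $B$ across the interfaces (the line $x_2=1$ between $\Omega_0$ and $\Omega_1$, the segment $\ell_{\sqrt{\alpha}}$ between $\Omega_1$ and $\Omega_2$, and their parabolic shifts) glues these local statements into global monotonicity on $\Omega$. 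The hard part is the chain-rule step in $\Omega_1$ and $\Omega_2$; the rest is either direct differentiation or transport along $T_{-k\tau}$.
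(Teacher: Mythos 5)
Your proof is correct and follows essentially the same route as the paper: differentiate $B$ along the foliation parameter $s$, check the sign, and combine with the fact that $s$ is decreasing in $x_2$ (at fixed $x_1$) via the chain rule, then transport to the remaining $\Omega_k$ by the parabolic shift. The only notable difference is that you compute $\partial x_2/\partial s$ explicitly — your formulas $\partial x_2/\partial s = -\tfrac{(s^2+1)^2}{2s^3}+2(x_1-u)$ and $\partial B/\partial s = s(x_1-u)-\tfrac{(s^2+1)^2}{4s^2}$ check out — whereas the paper states "it is clear from geometry of the foliation that $s$ is decreasing in $x_2$" without a calculation; so your version is slightly more self-contained.
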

\begin{proof}
The statement is obvious in $\Omega_+\cup\Omega_0.$ If $x\in\Omega_1,$ then direct differentiation of $B$ from~\eqref{d_1} with respect to $s$ gives
$$
2\frac{\partial B}{\partial s}=-u_s(1+s^2)+2s(x_1-u).
$$
We have $0\le x_1-u\le s.$
In addition, from~\eqref{vu_1} we have $\cma{u_s=\frac12\big(1+\frac1{s^2}\big).}$ Therefore,
\begin{align*}
2\frac{\partial B}{\partial s}&\le -\frac12\,\Big(1+\frac1{s^2}\Big)(1+s^2)+2s^2=\frac1{2s^2}\,(4s^4-(1+s^2)^2)=\frac1{2s^2}\,(s^2-1)(\cma{3}s^2+1)\le0,
\end{align*}
since $s\le 1.$ It is clear from geometry of the foliation that $s$ is decreasing in $x_2,$ hence we can conclude that $B$ is increasing in $x_2.$ 

If $x\in\Omega_2,$ the argument is identical, except we have to replace $s$ with $\frac\alpha s$ throughout. Finally, the statement for the rest of $\Omega$ follows from formula~\eqref{d_3+}, since only the second component of $T_{-k\tau}x$ depends on $x_2,$ and that component is increasing in $x_2.$
\end{proof}
The main fact about $B$ is contained in the following lemma. Its (somewhat technical) proof is given in Section~\ref{alpha_concavity}.
\begin{lemma}
\label{a-conc}
The function $B$ defined by~\eqref{domains}--\eqref{d_3+} is $\alpha$-concave on $\Omega.$
\end{lemma}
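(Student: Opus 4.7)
The plan is to reduce $\alpha$-concavity of $B$ to a finite set of local verifications by exploiting two structural features of the construction: $B$ is affine along each foliation segment $\ell_s$ inside every $\Omega_k$, $k\ge 1$, and the parabolic shift $T_\tau$ maps $\Omega_{2k+1}\cup\Omega_{2k+2}$ onto $\Omega_{2k-1}\cup\Omega_{2k}$ with $B(T_{-\tau}\,\cdot\,)=\alpha^{-1}B(\,\cdot\,)$ by~\eqref{d_3+}. The former kills concavity in the along-leaf direction for free, while the latter confines the truly non-trivial analysis to a single ``period,'' say $\Omega_+\cup\Omega_0\cup\Omega_1\cup\Omega_2$, with conclusions transported elsewhere by the scaling.

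First, I would verify that $B$ is continuous and $C^1$ across every interface between consecutive subdomains in~\eqref{domains}: the $x_2$-axis (between $\Omega_+$ and $\Omega_0$), the line $x_2=1$ (between $\Omega_0$ and $\Omega_1$), and the straight-line curves separating successive $\Omega_k, \Omega_{k+1}$. The matching at each interface is arranged by the foliation data in~\eqref{vu_1},~\eqref{vu_2}: as $s$ crosses the transition value $\sqrt\alpha$ (respectively $\alpha$), both the leaf endpoints $u(s), v(s)$ and the slope of $B$ along the leaf vary continuously, so $B$ and $\nabla B$ match. Smooth fit means the interface between two pieces carries no jump in first derivatives, and therefore chord-concavity across interfaces reduces to within-piece information plus long-chord estimates.

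Second, I would show that $B$ is concave in each piece. In $\Omega_+$ and $\Omega_0$, the explicit formulas~\eqref{d_+},~\eqref{d_0} give negative semidefinite Hessians by direct computation. In each $\Omega_k$, $k\ge 1$, the foliation linearity reduces the Hessian test to concavity in the transversal direction: parametrize by the leaf parameter $s$ and compute the transverse second derivative of $B$ from~\eqref{d_1} or~\eqref{d_2}. This is a rational function of $s$ whose non-positivity on $[\sqrt\alpha,1]$ and $[\alpha,\sqrt\alpha]$ can be checked by elementary algebra, in the same spirit as the monotonicity argument in Lemma~\ref{b_inc}. The scaling~\eqref{d_3+} then extends this to every $\Omega_{2k+1}\cup\Omega_{2k+2}$.

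The main obstacle is verifying the $\alpha$-concavity inequality~\eqref{601} for chords whose endpoints $x^-,x^+$ lie in different subdomains, where the segment $[x^-,x^+]$ may span an entire period or more and local Hessian information alone is insufficient. The plan here is to parametrize each endpoint by $(s^\pm,t^\pm)$, with $s^\pm$ identifying its leaf and $t^\pm\in[0,1]$ its position along the leaf, reduce to a canonical configuration by the homogeneity in Lemma~\ref{homogeneity} and the shift $T_a$, and use self-similarity to bound the number of bands that the segment can traverse. The restriction $\beta\ge\alpha$ supplies exactly the slack required to defeat the factor $\alpha$ by which $B$ decays when passing from $\Omega_{2k-1}\cup\Omega_{2k}$ to $\Omega_{2k+1}\cup\Omega_{2k+2}$: without it one could produce a chord with the ``far'' endpoint weighted by $\beta<\alpha$ for which~\eqref{601} would fail, and this is precisely why the theorem is stated with $\alpha$-concavity rather than full concavity. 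The remaining polynomial verification of~\eqref{601} case by case over the possible band-pairs is the technical heart of the argument.
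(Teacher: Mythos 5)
Your outline captures several genuine ingredients of the paper's argument: establishing local concavity of $B$ by Hessian computation piece by piece (this is exactly the paper's Lemma~\ref{lc}, verifying Condition~\ref{C1}), checking $C^1$ matching across the foliation interfaces, reducing by self-similarity to one fundamental period via the relation in~\eqref{d_3+}, and the correct heuristic that $\beta\ge\alpha$ is precisely what compensates for the factor $\alpha$ lost when crossing a period. However, the proposal is missing the decisive structural reduction that makes the proof tractable: the paper does \emph{not} attempt to verify~\eqref{601} for arbitrary chords $[x^-,x^+]$ and arbitrary $\beta\in[\alpha,\tfrac12]$. It instead invokes Lemma~\ref{L2} (imported from~\cite{alpha_trees}), which shows that local concavity plus two conditions stated \emph{only} for pairs $P,Q\in\Gamma_1$ with $|p-q|\le\tau$ suffice: a directional-derivative monotonicity $(D_{\overrightarrow{PQ}}B)(P)\ge(D_{\overrightarrow{PQ}}B)(Q)$ (Condition~\ref{C2}), and the single inequality $B(P)\ge(1-\alpha)B(R)+\alpha B(Q)$ for $\beta=\alpha$ with $R=\tfrac1{1-\alpha}(P-\alpha Q)$ the continuation point across $\Gamma_1$ (Condition~\ref{C3}). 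This collapses a four-parameter family of chords and a continuum of weights to a one-parameter family of boundary pairs at the single extremal weight $\beta=\alpha$. Your proposed ``case-by-case polynomial verification over band-pairs'' would, without such a reduction, have to reconstruct the content of Lemma~\ref{L2} from scratch, and you give no mechanism to bound or organize the cases. The ``technical heart'' you acknowledge is exactly the gap: as written, the plan does not say how to reduce to a finite, tractable check.

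A minor slip worth noting: since $T_a$ translates $x_1$ by $-a$, it is $T_\tau$ (not $T_{-\tau}$) that sends $\Omega_{2k-1}\cup\Omega_{2k}$ onto $\Omega_{2k+1}\cup\Omega_{2k+2}$, so your stated direction of the shift is reversed, although the scaling relation $B(T_{-\tau}x)=\alpha^{-1}B(x)$ you write is consistent with~\eqref{d_3+}.
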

We now define the family $\{A(\,\cdot\,; L)\}$ and verify its properties. For $L\in\mathbb{R},$ let
\eq[def_a]{
A(x; L)=L+B(T_L x)=L+B(x_1-L,x_2-2x_1L+L^2).
}
\begin{lemma}
\label{a_ver}
The family~\eqref{def_a} has properties \textup{(1)--(3)} from Lemma~\textup{\ref{induction}}.
\end{lemma}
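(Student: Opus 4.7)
The plan is to exploit the explicit construction $A(x;L) = L + B(T_L x)$ and reduce each property to a property of $B$. The structural fact driving everything is that $T_a$ is an affine bijection of $\Omega$ onto itself: rewriting \eqref{shift} as $T_a(x_1,x_2) = (x_1-a,\,(x_1-a)^2 + (x_2-x_1^2))$ shows that $T_a$ preserves the quantity $x_2-x_1^2$ that defines $\Omega$, and its inverse is $T_{-a}$.

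Property \ref{cond1} is then a formality. Given $x^\pm\in\Omega$ and $\beta\in[\alpha,1/2]$ with $(1-\beta)x^-+\beta x^+\in\Omega$, affinity of $T_L$ gives $T_L((1-\beta)x^-+\beta x^+) = (1-\beta)T_L x^-+\beta T_L x^+$, still in $\Omega$. Applying Lemma~\ref{a-conc} to $B$ at these three shifted points and adding $L$ to both sides yields the required $\alpha$-concavity of $A(\,\cdot\,;L)$.

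For property \ref{cond2}, when $x_1\ge L$ the point $T_L x$ has first coordinate $x_1-L\ge 0$ and hence lies in $\Omega_+$. Substituting into \eqref{d_+} gives
\[
B(T_L x) = (x_1-L) + \sqrt{(x_2-2x_1L+L^2) - (x_1-L)^2} = (x_1-L) + \sqrt{x_2-x_1^2},
\]
so $A(x;L) = x_1 + \sqrt{x_2-x_1^2}$, which does not depend on $L$ and therefore equals $A(x;x_1)$. For property \ref{cond3}, I would reduce to showing $B\ge 0$ on $\Omega_0\cup\Omega_-$ (since $x_1\le L$ forces the first coordinate of $T_L x$ to be nonpositive, placing $T_L x$ in $\Omega_0\cup\Omega_-$). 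On $\Omega_0$, the defining inequality $x_2\ge x_1^2$ together with $x_1\le 0$ gives $B(x)=x_1+\sqrt{x_2}\ge x_1+|x_1|=0$. On $\Omega_1\cup\Omega_2$, formulas \eqref{d_1} and \eqref{d_2} express $B$ as a manifestly positive factor times $x_1-u\ge 0$, the latter holding because $x_1\in[u,v]$ along $\ell_s$ with $v-u=s>0$ from \eqref{vu_1} and $v-u=\alpha/s>0$ from \eqref{vu_2}. Finally, the scaling identity \eqref{d_3+} propagates nonnegativity to every $\Omega_{2k+1}\cup\Omega_{2k+2}$ for $k\ge 1$, since $\alpha^k>0$ and $T_{-k\tau}$ carries $\Omega_{2k+1}\cup\Omega_{2k+2}$ back into $\Omega_1\cup\Omega_2$.

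The main obstacle does not lie in this lemma: it lies in Lemma~\ref{a-conc}, whose technical proof is deferred to Section~\ref{alpha_concavity}. With that result as a black box, the verification above is essentially bookkeeping. No new geometry enters, the computation in property \ref{cond2} is a one-line substitution in $\Omega_+$, and the positivity check in property \ref{cond3} is immediate from the explicit piecewise formulas \eqref{d_0}--\eqref{d_3+} defining $B$.
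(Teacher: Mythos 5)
Your proof is correct and follows essentially the same route as the paper's: property~(1) via affinity of $T_L$ and Lemma~\ref{a-conc}, property~(2) by the explicit computation $B(T_L x)=x_1-L+\sqrt{x_2-x_1^2}$ in $\Omega_+$, and property~(3) from nonnegativity of $B$ (the paper states $B\ge0$ on all of $\Omega$ at once rather than restricting to $\Omega_0\cup\Omega_-$, but that is a trivial difference since $B\ge0$ on $\Omega_+$ is also obvious from \eqref{d_+}).
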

\begin{proof}~
\ben[leftmargin=*]
\item
We verify Definition~\ref{def} for each $A(\,\cdot\,; L), L\in\mathbb{R},$ using~\eqref{def_a}, the linearity of the shift $T_a,$ and Lemma~\ref{a-conc}: 
\begin{align*}
A((1-\beta) x^-+\beta x^+; L)&=L+B\big(T_L((1-\beta) x^-+\beta x^+)\big)=L+B((1-\beta) T_Lx^-+\beta T_L x^+)\\
&\ge (1-\beta) (L+B(T_L x^-))+ \beta (L+B(T_L x^+))\\
&=(1-\beta) A(x^-; L)+ \beta A(x^+; L).
\end{align*}
\item
If $x_1\cma{\ge}L,$ then $T_Lx\in\Omega_+,$ so
\begin{align*}
A(x; L)&=L+\big(x_1-L+\sqrt{(x_2-2x_1L+L^2)-(x_1-L)^2}\big)\\
&=x_1+\sqrt{x_2-x_1^2}=x_1+B(0,x_2-x_1^2)=A(x;x_1).
\end{align*}
\item
Since $B\ge0$ on $\Omega,$ which is evident from \eqref{d_+}--\eqref{d_3+}, we have $A(x; L)\ge L.$ \qedhere
\een
\end{proof}

We are now in a position to prove the inequalities in Theorem~\ref{main_th_trees} and, thus, those in Theorem~\ref{main_th}.

\begin{proof}[Proof of Theorem~\textup{\ref{main_th_trees}}]
\cma{Letting in Lemma~\ref{induction}} $x_1=\avm{\varphi}J,$  $x_2=\avm{\cma{\varphi^2}}J,$ $L=\inf_J N_\T\varphi,$ and $t=L-x_1,$ we have
$$
\avm{N_\T\varphi}J \le A(x; L)=L+B(T_L x)\le L+B(-t,t^2+1)=L+b(-t),
$$
where the second inequality is due to Lemma~\ref{b_inc} \cma{and the function $b$ is given by~\eqref{bp} and~\eqref{gy}.} Therefore, we can take
$$
\mathcal{F}_\alpha(t)=b(-t).
$$
Note that $\mathcal{F}_\alpha(k\tau)=b(-k\tau)=\alpha^k.$ \cma{The elementary verification of the fact that $b$ is increasing and convex on $[0,\infty)$ -- and, thus, that $\mathcal{F}_\alpha$ is decreasing and convex  -- is left as an exercise.}
\end{proof}
\subsection{Main Bellman theorem}
As mentioned in the introduction, the inequalities of Theorem~\ref{main_th} (but not their sharpness) now follow as an immediate corollary~\cma{of Theorem~\ref{main_th_trees}}. However, we can now state a stronger result that fully captures the connection between the function $B$ -- and, thus, the family $\{A(\,\cdot\,; L)\}_{L\in\mathbb{R}}$ -- and the Bellman function $\bel{B}^n$ defined in~\eqref{b_test}, \eqref{b_main}. This result can be viewed as the main finding of the paper.

\begin{theorem}
\label{main_bellman_th}
For each $(x,L)\in S,$ let $A^n(x;L)$ be defined by~\eqref{domains}--\eqref{d_3+} and~\eqref{def_a} for $\alpha=2^{-n}.$ Then
$$
\bel{B}^n(x, L)= A^n(x; L)\quad\text{for all}~(x,L)\in S.
$$
\end{theorem}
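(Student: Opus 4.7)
The upper bound $\bel{B}^n(x, L) \le A^n(x; L)$ is immediate from what has already been assembled: the dyadic lattice $\mathcal{D}(Q)$ is a $2^{-n}$-tree on $(Q, dx)$, Lemma~\ref{a_ver} verifies the hypotheses of Lemma~\ref{induction} for the family $\{A^n(\,\cdot\,; L)\}_{L \in \mathbb{R}}$, and Lemma~\ref{key_obs} ensures that every $\varphi \in E_{x, L, Q}$ satisfies $\inf_Q N\varphi = L$; hence $\av{N\varphi}Q \le A^n(x; L)$, and taking the supremum over $E_{x, L, Q}$ gives $\bel{B}^n(x, L) \le A^n(x; L)$.

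For the reverse inequality, the plan is first to reduce to $L = 0$ via the additive homogeneity of both functions. Lemma~\ref{homogeneity} yields $\bel{B}^n(x, L) - L = \bel{B}^n(T_L x, 0)$, while~\eqref{def_a} gives $A^n(x, L) - L = B(T_L x) = A^n(T_L x, 0)$, so it suffices to show $\bel{B}^n(x, 0) \ge B(x)$ for every $x \in \Omega$ with $x_1 \le 0$. Following the paper's announced strategy (abstract concavity, no explicit optimizers), I would leverage the two properties of $\bel{B}^n$ already established---the boundary condition of Lemma~\ref{bc} on $\Gamma_0$ and the restricted $\alpha$-concavity of Lemma~\ref{mi}---in conjunction with the extremal foliation structure of $B$. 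Since $B$ is linear along each segment $\ell_s$ by construction (see~\eqref{d_1},~\eqref{d_2},~\eqref{d_3+}), this linearity saturates the $\alpha$-concavity inequality in foliation directions, identifying $B$ as the minimal function compatible with Lemma~\ref{bc} and the restricted concavity. The argument would then apply Lemma~\ref{mi} iteratively along foliation segments: each $x \in \Omega_0 \cup \Omega_-$ is split into a contribution on $\Gamma_0$ (evaluated exactly via Lemma~\ref{bc}) and a recursive contribution on $\Gamma_1$ that reduces, by the self-similar scaling~\eqref{d_3+}, to the same problem on a lower-generation region.

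The principal obstacle is to show that this iteration is sharp: a single application of the restricted concavity along $\ell_s$ does not by itself produce the exact value $B(x)$, so the argument must combine infinitely many steps and pass to the limit. The self-similar identity~\eqref{d_3+}---expressing $B$ on $\Omega_{2k+1} \cup \Omega_{2k+2}$ as $\alpha^k$ times $B$ on $\Omega_1 \cup \Omega_2$ after a parabolic shift---provides the geometric decay needed to telescope the recursion, matching the $\alpha^k$ factor implicit in~\eqref{mt1.5}. Once the decomposition of $x$ along $\ell_s$ is checked to saturate Lemma~\ref{mi} (a direct consequence of the linearity of $B$ along $\ell_s$), the lower bound on all of $\Omega_-$ follows by induction on the region index $k$, with a standard monotone convergence argument closing out the limit.
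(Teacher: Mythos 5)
Your overall two-sided structure (upper bound via Bellman induction, lower bound via concavity of $\bel{B}^n$) matches the paper's, but both halves have gaps.

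For the upper bound, applying Lemma~\ref{induction} directly with $\T=\mathcal{D}(Q)$ does not yield $\av{N\varphi}Q\le A^n(x;L)$. With that tree, $N_\T\varphi$ sees only subcubes of $Q$, so $\inf_QN_\T\varphi=\av{\varphi}Q=x_1$, and the lemma controls $\av{N_\T\varphi}Q$ by $A^n(x;x_1)$, not $\av{N\varphi}Q$ by $A^n(x;L)$; but $N\varphi|_Q=\max\{N_\T\varphi|_Q,L\}\ge N_\T\varphi|_Q$ and $A^n(x;\cdot)$ is not monotone, so you have the wrong quantity bounded by the wrong bound. The paper closes this by passing to the dyadic parent $R$ of $Q$, setting $\tilde\varphi=\varphi\chi_Q+a\chi_{R\setminus Q}$ with $a=(2^nL-x_1)/(2^n-1)$ so that $\av{\tilde\varphi}R=L$, and taking $\T=\mathcal{D}(R)$ and $K=Q$; then $N\varphi|_Q=N_\T\tilde\varphi|_Q$ and $\inf_QN_\T\tilde\varphi=L$, and Lemma~\ref{induction} applies. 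Lemma~\ref{key_obs} identifies $\inf_QN\varphi$ with $L$, but by itself it does not make Lemma~\ref{induction} see that $L$.

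For the lower bound the broad shape is right, but you are missing Lemma~\ref{loc_conc}: $\bel{B}^n(\cdot,0)$ is locally concave on $\Omega_*$, a separate fact from the restricted concavity of Lemma~\ref{mi} (whose weights are frozen at $(1-2^{-n},2^{-n})$), and it is this local concavity that lets one split along a foliation segment with an arbitrary interior weight $\gamma$. So $B$ is not ``the minimal function compatible with Lemma~\ref{bc} and the restricted concavity'' as you claim. Lemma~\ref{mi} enters specifically at $\Gamma_1$, where it pushes one endpoint $Q$ to the half-plane $x_1\ge L^+$ and reduces, via homogeneity, to $\mathcal{B}(0,1)$; this yields only the one-sided recursion $\mathcal{B}(P)\ge 2^{-n}\mathcal{B}(T_{-\tau}P)$, not a self-similar identity for $\bel{B}^n$. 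Finally, the bound on the vertical boundary $\{x_1=0\}$ is obtained by a genuinely different device, a $\delta\to0$ limit exploiting the one-sided derivatives of the concave map $y\mapsto\mathcal{B}(0,y)$, for which your proposal has no substitute.
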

As is customary, the proof of this theorem consists of two lemmas.
\begin{lemma}
\label{B<A}
$$
\bel{B}^n(x, L)\le A^n(x; L)\quad\text{for all}~(x,L)\in S.
$$
\end{lemma}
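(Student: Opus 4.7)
The plan is to deduce the lemma directly from Lemma~\ref{induction} by viewing the dyadic lattice as a specific instance of an $\alpha$-tree. Fix $(x,L)\in S$, a cube $Q\in\mathcal{D}$, and an arbitrary test function $\varphi\in E_{x,L,Q}$. I need to produce the bound $\av{N\varphi}Q \le A^n(x;L)$ and then take the supremum.

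First I would observe that $\mathcal{D}(Q)$, equipped with Lebesgue measure, is an $\alpha$-tree on $Q$ with $\alpha=2^{-n}$: every dyadic cube splits into exactly $2^n$ children, each of measure $2^{-n}$ times the parent, so condition (2c) of Definition~\ref{tree} is satisfied with equality, and conditions (1), (2a,b), (3), (4) are classical. Next I would identify the parameters: by the definition of $E_{x,L,Q}$ we have $\av{\varphi}Q=x_1$, $\av{\varphi^2}Q=x_2$, $\|\varphi\|_{\BMO^d(Q)}\le 1$, and
$$
L=\sup_{R\supset Q;\,R\in\mathcal{D}}\av{\varphi}R.
$$
Applying Lemma~\ref{key_obs} to the tree $\mathcal{D}$ on $\rn$ at the element $Q$, this supremum equals $\inf_Q N\varphi$. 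Thus the external parameter $L$ in the Bellman formulation coincides precisely with the quantity $\inf_K N_\T\varphi$ required by Lemma~\ref{induction}.

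By Lemma~\ref{a_ver}, the family $\{A^n(\,\cdot\,;L)\}_{L\in\mathbb{R}}$ defined via~\eqref{def_a} (with $\alpha=2^{-n}$) satisfies conditions (1)--(3) of Lemma~\ref{induction}; the only nontrivial input is the $\alpha$-concavity of $B$, granted by Lemma~\ref{a-conc}. Invoking Lemma~\ref{induction} with $\T=\mathcal{D}(Q)$, $K=Q$, and the family $\{A^n(\,\cdot\,;L)\}$ therefore yields
$$
\av{N\varphi}Q \le A^n\big(\av{\varphi}Q,\av{\varphi^2}Q;\,\inf_Q N\varphi\big)=A^n(x;L).
$$
Taking the supremum over $\varphi\in E_{x,L,Q}$ gives $\bel{B}^n(x,L)\le A^n(x;L)$, as claimed.

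The only point that requires minor care is the hypothesis that $N\varphi$ not be identically infinite on $Q$ in order to apply Lemma~\ref{induction}: for $\varphi\in E_{x,L,Q}$ the averages over supercubes are bounded by $L<\infty$, while $\varphi|_Q\in\BMO^d(Q)$ ensures $\varphi|_Q\in L^p_{loc}$ for all $p<\infty$, so the dyadic maximal function on $Q$ is a.e.\ finite. There is no genuine obstacle here; all the serious work is already packaged into the $\alpha$-concavity of $B$ (deferred to Section~\ref{alpha_concavity}) and into Lemmas~\ref{key_obs} and~\ref{induction}, which together reduce this statement to essentially a bookkeeping exercise matching parameters.
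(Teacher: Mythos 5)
There is a genuine gap. If you take $\T=\mathcal{D}(Q)$ as your $\alpha$-tree with root $Q$, then the tree maximal operator $N_\T\varphi$ on $Q$ only sees averages over dyadic \emph{sub}cubes of $Q$, not over supercubes. Consequently $\inf_Q N_\T\varphi=\av{\varphi}Q=x_1$ (applying Lemma~\ref{key_obs} inside this tree: the only $R\in\T$ with $R\supset Q$ is $Q$ itself), and what Lemma~\ref{induction} actually gives you with these choices is
$$
\av{N_\T\varphi}Q \le A^n\bigl(x;\,\inf_Q N_\T\varphi\bigr)=A^n(x;x_1),
$$
not $\av{N\varphi}Q\le A^n(x;L)$. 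Meanwhile, for $y\in Q$ we have $N\varphi(y)=\max\{L,\,N_\T\varphi(y)\}\ge N_\T\varphi(y)$, with strict inequality on the set where $N_\T\varphi<L$, which is nonempty whenever $x_1<L$. So you have bounded the wrong quantity by the wrong value of the family parameter, and the two errors do not cancel. Your intermediate invocation of Lemma~\ref{key_obs} ``to the tree $\mathcal{D}$ on $\rn$'' is also not licensed as stated, since $\rn$ has infinite measure and is not a valid $\alpha$-tree in the sense of Definition~\ref{tree} (though the identity $L=\inf_Q N\varphi$ is true and the paper uses it).

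The paper's proof fixes exactly this by enlarging the tree: it takes $\T$ to be all dyadic subcubes of the \emph{parent} $R$ of $Q$, and replaces $\varphi$ by $\tilde\varphi=\varphi\chi_Q+a\chi_{R\setminus Q}$ with the constant $a$ chosen so that $\av{\tilde\varphi}R=L$. For $y\in Q$ the only tree elements containing $y$ are subcubes of $Q$ and the root $R$, so $N_\T\tilde\varphi=\max\{L,N_{\mathcal{D}(Q)}\varphi\}=N\varphi$ on $Q$, and $\inf_Q N_\T\tilde\varphi=L$. Then Lemma~\ref{induction} applied with $K=Q$ (not $K=R$) yields exactly $\av{N\varphi}Q\le A^n(x;L)$. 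To repair your argument, you need this (or an equivalent) device for encoding the external parameter $L$ into the tree; restricting to $\mathcal{D}(Q)$ alone loses that information.
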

\begin{proof}
Take any $(x,L)\in S$ and any dyadic cube $Q.$ As mentioned before, the set $E_{x,L,Q}$ is non-empty. Take  $\varphi\in E_{x,L,Q}.$ Let $R$ be the dyadic parent of $Q.$ Define a new function $\tilde\varphi$ by setting $\tilde\varphi=\varphi\,\chi_Q+a\,\chi_{R\setminus Q},$ where $a:=\frac{L|R|-x_1|Q|}{|R\setminus Q|}=\frac{2^nL-x_1}{2^n-1}.$ Let $\T$ be the collection of all dyadic subcubes of $R.$ Then $\T$ is a $2^{-n}$-tree on $R$ (equipped with the Lebesgue measure), and $N\varphi|_Q=N_{\T}\tilde\varphi|_Q.$ By Lemma~\ref{induction} with $K=Q,$ 
$$
\av{N\varphi}Q=\av{N_\T\tilde\varphi}Q\le A^n(\av{\tilde\varphi}Q,\av{\tilde\varphi^2}Q;\,\inf_QN_\T\tilde\varphi)=A^n(x;L).
$$
Now, take the supremum in the left-hand side over all $\varphi \in E_{x,L,Q}.$
\end{proof}

\begin{lemma}
\label{B>A}
$$
\bel{B}^n(x, L)\ge A^n(x; L)\quad\text{for all}~(x,L)\in S.
$$
\end{lemma}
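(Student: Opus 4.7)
The plan is to reduce to $L = 0$ via the additive homogeneity of $\bel{B}^n$ (Lemma~\ref{homogeneity}) and then realize $B(y)$ as the limit of iterated $\alpha$-splits of the current state along the foliation defining $B$. By homogeneity, $\bel{B}^n(x, L) = L + \bel{B}^n(T_L x, 0)$ and $A^n(x; L) = L + B(T_L x)$; since $(x, L) \in S$ forces $(T_L x)_1 \le 0$, it suffices to show $\bel{B}^n(y, 0) \ge B(y)$ for $y \in \Omega$ with $y_1 \le 0$, i.e., for $y \in \Omega_0 \cup \Omega_-$.

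In $\Omega_-$, the crucial fact is that $B$ is linear along the foliation segments $\ell_s$, whose endpoints $(u(s), u(s)^2) \in \Gamma_0$ carry value $0$ (by the boundary condition $\bel{B}^n(y_1, y_1^2, 0) = 0$ for $y_1 \le 0$) and whose endpoints $(v(s), v(s)^2 + 1) \in \Gamma_1$ carry value $b(v(s))$. For $y$ on $\ell_s$ at convex-combination position $\beta \in [0, 1]$, I perform one of two $\alpha$-splits: if $\beta \le \alpha$, write $y = (1 - \alpha)(u, u^2) + \alpha z$ with $z \in \ell_s$ at position $\beta/\alpha$; if $\beta \ge \alpha$, write $y = (1 - \alpha) z + \alpha (v, v^2 + 1)$ with $z \in \ell_s$ at position $(\beta - \alpha)/(1 - \alpha)$. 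Applying Lemma~\ref{mi} and using the linearity of $B$ along $\ell_s$, each split propagates the inequality $\bel{B}^n(\cdot, 0) \ge B(\cdot)$ to the new state $z$ and, in the second case, to the $\Gamma_1$-endpoint. The analogous argument in $\Omega_0$ uses horizontal foliation segments from $(-\sqrt{y_2}, y_2) \in \Gamma_0$ to $(0, y_2)$, whose right endpoint lies on the boundary with $\Omega_+$. Iteration, together with the parabolic-shift recursion~\eqref{d_3+}, extends the argument from $\Omega_1 \cup \Omega_2$ (and the $y_2 \le 1$ strip in $\Omega_0$) to all subdomains.

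The remaining boundary case is $\Gamma_1$: for $(p, p^2 + 1)$ with $p \le 0$, a direct computation shows that the unique $\alpha$-split of this point into pieces lying on $\Gamma_0$ and $\Gamma_1$ is $(p, p^2+1) = (1-\alpha)(p - \sqrt\alpha, (p-\sqrt\alpha)^2) + \alpha (p + \tau, (p + \tau)^2 + 1)$. Applying Lemma~\ref{mi} (with $L^- = 0$ and, when $p + \tau \le 0$, also $L^+ = 0$) yields the shift relation $\bel{B}^n((p, p^2+1), 0) \ge \alpha \bel{B}^n((p + \tau, (p + \tau)^2+1), 0)$, mirroring the identity $b(p) = \alpha b(p + \tau)$ implicit in~\eqref{bp}. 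Iterating this shift carries the problem to $\Gamma_1 \cap \Omega_+$, where the inequality $\bel{B}^n((p, p^2+1), 0) \ge p + 1$ is a classical consequence of a dyadic cascade construction for the dyadic maximal operator on $L^2$ (in the spirit of Nazarov--Treil and Melas).

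The main obstacle is the verification that this infinite cascade of $\alpha$-splits, corresponding to a test function defined recursively on the dyadic tree, converges to $B(y)$ while maintaining the global BMO constraint $\|\varphi\|_{\BMO^d(Q)} \le 1$. The construction at each finite stage produces a test function $\varphi_m$ whose $\av{N\varphi_m}{Q}$ realizes a finite-stage lower bound through iterated use of Lemma~\ref{mi}; passage to the limit proceeds by monotone convergence of $N\varphi_m \nearrow N\varphi$, paralleling the proof of Lemma~\ref{induction}.
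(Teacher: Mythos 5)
Your overall plan—reduce to $L=0$ via Lemma~\ref{homogeneity}, then exploit the foliation and the concavity properties of $\bel{B}^n$—is indeed the paper's strategy. However, as written the proposal has two genuine gaps in the boundary cases, which carry the whole argument.

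First, the boundary segment $\{(0,y)\colon 0\le y\le 1\}$, and in particular the key inequality $\bel{B}^n\big((0,1),0\big)\ge 1$, is never established. You gesture at "horizontal foliation segments" in $\Omega_0$ and at "a classical dyadic cascade on $\Gamma_1\cap\Omega_+$," but the right endpoint of each horizontal segment is $(0,y_2)$, where $B(0,y_2)=\sqrt{y_2}\ne 0$; so the linearity-of-$B$-along-extremals argument does not close without already knowing $\bel{B}^n(0,y_2,0)\ge\sqrt{y_2}$. Moreover, $\Gamma_1\cap\Omega_+$ is not in the domain $S$ when $L=0$, so the expression $\bel{B}^n\big((p,p^2+1),0\big)$ you iterate toward is not defined there; one must switch to $L=p$ via Lemma~\ref{homogeneity}, which only reduces the matter back to $\bel{B}^n(0,1,0)\ge1$. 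The paper handles exactly this point in Lemma~\ref{x_1=0} with a delicate $\delta\to 0$ argument combining Lemmas~\ref{mi}, \ref{homogeneity}, and \ref{loc_conc}; some substitute for this argument is unavoidable.

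Second, the fixed $\tau$-shift split $(p,p^2+1)=(1-\alpha)(p-\sqrt\alpha,(p-\sqrt\alpha)^2)+\alpha(p+\tau,(p+\tau)^2+1)$, while correct as an identity, is not the right split for $P\in\Omega_{2k+1}$. For $P\in\Omega_1$ it gives (after applying Lemmas~\ref{mi} and \ref{homogeneity}) $\bel{B}^n(P,0)\ge \alpha\big(p+\tau+\bel{B}^n(0,1,0)\big)$, which is strictly weaker than the target $B(P)=\tfrac12 s(1+s^2)$ whenever $s>\sqrt\alpha$, i.e.\ for all but one point of $\Gamma_1\cap\Omega_1$. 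The paper's Lemma~\ref{gamma_1} handles $\Omega_1$ with a different split: $Q=(v^+,(v^+)^2+1)$ is taken to be the second intersection of the extremal through $P$ with $\Gamma_1$ (so $|p-v^+|<\tau$ in general), the mass-$\alpha$ piece lands on $Q$, and the remaining mass-$(1-\alpha)$ point $R_1$ is interior; one then needs the local concavity of $\bel{B}^n$ (Lemma~\ref{loc_conc}) to bound $\bel{B}^n(R_1,0)$ from below by a multiple of $\bel{B}^n(P,0)$ and solve for $\bel{B}^n(P,0)$. Your iterated-$\alpha$-split framework does not contain this ingredient, and without it the bound on $\Gamma_1\cap\Omega_{2k+1}$ will not be sharp. (By contrast, your split is exactly the paper's for $\Omega_{2k}$.) Relatedly, your interior argument would be much cleaner if you simply established local concavity of $\bel{B}^n$ (via a direct splitting of the cube as in Lemma~\ref{loc_conc}) rather than attempting an infinite cascade of $\alpha$-splits and worrying about its convergence.
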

To prove this lemma, and thus Theorem~\ref{main_bellman_th}, we will use the concavity properties of the Bellman function $\bel{B}^n$ \cma{and} the geometric structure of the candidate $A^n.$ This is done in Section~\ref{optimizers}.
\subsection{Bellman majorants}
We now present another family of functions on $\Omega$ that verifies the conditions of Lemma~\ref{induction} and thus bounds the Bellman function $\bel{B}^n$ from above. Let
$$
B_0(x)=
\begin{cases}
x_1+\sqrt x_2,&x\in\Omega_-\cup\Omega_0,\\
x_1+\sqrt{x_2-x_1^2},& x\in\Omega_+
\end{cases}
$$
and
$$
A_0(x; L)=L+B_0(T_L x).
$$
Observe that each $A_0(\,\cdot\,; L)$ is concave in the convex region $\{x: x_2\ge x_1^2\}$ which implies that it is $\alpha$-concave on $\Omega.$ Its maximum on $\Omega\cap\{x_1\le L\}$ is attained at the point $(L,L^2+1).$ Therefore, 
\eq[b_0]{
\bel{B}^n(x,L)\le A_0(x; L)\le A_0(L,L^2+1;L)=L+1.
}
Thus, the family $\{A_0(\,\cdot\,; L)\}_L$ gives the sharp norm constant 1 for the dyadic maximal function. It also has the obvious advantage of being simple and explicit. In fact, it coincides with $\bel{B}^n$ in the region $T_{-L}\Omega_0.$ However, it produces only crude bounds for $\bel{B}^n$ away from that region. In particular, the function $\Phi_n(t)$ that this family would yield in Theorem~\ref{main_th} is
$$
\Phi_n(t)=\sqrt{t^2+1}-t,
$$
which is far from the sharp exponential dimensional decay of the true function $\Phi_n.$ 

At the cost of added complexity, one can produce a better majorant family $\{A_k(\,\cdot\,;L)\}_L$ by taking any $k\ge1,$ cutting the ``true'' Bellman candidate $B$ defined by~\eqref{d_1}--\eqref{d_3+} off after $\Omega_k,$ and \cma{extending this cut-off beyond $\Omega_k$ by the same analytical expression as in $\Omega_k.$ This will give an $\alpha$-concave function and, upon setting setting 
$$A_k(x; L)=L+B_k(T_Lx),$$ 
a new family of majorants. None of these majorants would yield the decay of the function~$\Phi_n,$ though they would converge pointwise to the true candidate $B$ as $k\to\infty.$}

\section{The converse inequality and optimizers}
\label{optimizers}

In this section, we prove Lemma~\ref{B>A} and thus complete the proof of Theorems~\ref{main_bellman_th} and~\ref{main_th}. As shown below, it will be enough to establish the special case $L=0.$ Thus, all consideration will be restricted to the following domain:
$$
\Omega_*:=\Omega_-\cup\Omega_0=\bigcup_{k=0}^\infty\Omega_k.
$$
To streamline notation, throughout this section let us write $Q_0=(0,1)^n,$ $\mathcal{B}(x)=\bel{B}^n(x,0),$ and $F_x=E_{x,0,Q_0}$ for $x\in\Omega_*.$ We will also reuse the earlier notation $\tau=\frac{1-\alpha}{\sqrt\alpha}$ with $\alpha=2^{-n}.$ Thus, in this section, $\tau=2^{n/2}-2^{-n/2}.$
\begin{lemma}
\label{l=0}
\eq[BB]{
\mathcal{B}(x)\ge B(x)\quad\text{for all}~x\in\Omega_*.
}
\end{lemma}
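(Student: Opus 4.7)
The plan is to combine the $\alpha$-concavity of $\mathcal{B}$ from Lemma~\ref{mi}, the boundary data $\mathcal{B}(x_1,x_1^2)=0$ from Lemma~\ref{bc}, and the foliation structure of the candidate $B$: on each $\Omega_k$ with $k\ge 1$, $B$ is linear along the segments $\ell_s$ connecting $P^-(s)=(u(s),u(s)^2)\in\Gamma_0$ to $P^+(s)=(v(s),v(s)^2+1)\in\Gamma_1$, with $B(P^-)=0$ and $B(P^+)=b(v(s))$. A direct check using \eqref{vu_1}--\eqref{s_3+} shows that $u(s),v(s)\le 0$ for every $s$ associated to $x\in\Omega_*\cap\{x_1\le 0\}$, so the $L$-adjustments in Lemma~\ref{mi} give $L^\pm=0$ and the auxiliary parameter decouples from the entire argument.

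For the base region $\Omega_0$, where $B(x)=x_1+\sqrt{x_2}$ is genuinely concave and not tied to the segment foliation, I would prove the bound by constructing an optimizing sequence of two-valued test functions on $Q_0$: a small subcube of measure $\epsilon_j\to 0$ carrying a large value, with the complementary value adjusted so that $\av{\varphi_j}{Q_0}=x_1$ and $\av{\varphi_j^2}{Q_0}=x_2$, iteratively refined through dyadic scales until $\av{N\varphi_j}{Q_0}\to x_1+\sqrt{x_2}$. This is the classical Nazarov--Treil--Vasyunin construction for the $L^2$ dyadic maximal function (see \cite{nt,ssv}), adapted with the outer supremum pinned at~$L=0$. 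A parallel direct construction yields the $\Gamma_1$-boundary inequality $\mathcal{B}(v,v^2+1)\ge b(v)$ for $v\le 0$: the variance on $Q_0$ must be exactly~$1$, saturating the BMO constraint, and the optimizing sequence uses two-valued splits whose weights mirror the foliation parameter $s$ and recursively refine on the ``high'' subcube, producing exactly the parametric formulas \eqref{vu_1}--\eqref{d_1} and their successors \eqref{vu_2}--\eqref{d_3+} via the parabolic shift.

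With the boundary cases in hand, the interior bound on each $\Omega_k$ ($k\ge 1$) follows by iteratively applying Lemma~\ref{mi} along $\ell_s$: at the split point $x_*=(1-\alpha)P^-+\alpha P^+$ one gets
\begin{equation*}
\mathcal{B}(x_*)\ge(1-\alpha)\mathcal{B}(P^-)+\alpha\mathcal{B}(P^+)\ge\alpha b(v(s))=B(x_*),
\end{equation*}
and iterating this splitting --- pulling out at each step a factor of $\alpha$ towards either $P^+$ or $P^-$ according to the dyadic expansion of the target $\lambda$ --- produces a dense set of positions along $\ell_s$ where the matching lower bound holds. The main obstacle is closing the density/continuity step: passing from this dense set to an arbitrary $\lambda\in[0,1]$ requires continuity of $\mathcal{B}$ along $\ell_s$, which I would derive from the $\alpha$-concavity of Lemma~\ref{mi} together with the local boundedness of $\mathcal{B}$ (itself a consequence of the already-proved upper bound $\mathcal{B}\le B$ from Lemma~\ref{B<A}). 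A secondary subtlety is the interaction between different $\Omega_k$: here the parabolic self-similarity $B\circ T_\tau=\alpha\,B$ on $\Omega_1\cup\Omega_2$ lets me propagate the estimate between $\Omega_k$ and $\Omega_{k+2}$, so that the overall induction closes.
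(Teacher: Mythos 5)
Your proposal diverges from the paper's proof on both fronts. For the interior of each $\Omega_k$, you iterate Lemma~\ref{mi} dyadically along the extremal segment $\ell_s$ and then try to close by density plus continuity. This is the wrong tool and leaves a genuine gap: Lemma~\ref{mi} gives only the single split ratio $2^{-n}$, and iterating it does not obviously produce arbitrary positions along $\ell_s$; the ``dense set'' you describe is not dense in any straightforward way, and the continuity of $\mathcal B$ along $\ell_s$ that you invoke is not a consequence of $\alpha$-concavity plus boundedness (the endpoints of $\ell_s$ lie on the boundary of $\Omega_*$, exactly where concave-like functions can be discontinuous). The clean fix --- and the paper's actual route (Lemma~\ref{loc_conc}) --- is to note that the cube-rescaling argument that proves Lemma~\ref{mi} works with \emph{any} split ratio $\gamma\in(0,1)$, not just $2^{-n}$: split $Q_0$ into two unions of dyadic subcubes of measures $1-\gamma$ and $\gamma$. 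This gives genuine local concavity of $\mathcal B$ on $\Omega_*$, which then yields the interior bound in one line from the boundary values, since $B$ is affine along $m_x$.

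For the boundary data, you propose explicit optimizing sequences both on $\Omega_0$ (the $x_1=0$ segment) and along $\Gamma_1$. The paper deliberately avoids this: it states that Lemma~\ref{l=0} is proved ``without relying on explicit optimizers at all,'' and only later exhibits an optimizer for the single point $(0,1)$. Your sketch for $\Gamma_1$ --- ``two-valued splits whose weights mirror the foliation parameter $s$ and recursively refine on the high subcube, producing exactly the parametric formulas'' --- is well short of a proof and would be substantial work for arbitrary points in $\Omega_{2k}$, $\Omega_{2k+1}$. Moreover, your remark that ``the $L$-adjustments in Lemma~\ref{mi} give $L^\pm=0$ and the auxiliary parameter decouples from the entire argument'' is misleading as a description of the available structure: the paper's proofs of Lemmas~\ref{x_1=0} and~\ref{gamma_1} close \emph{precisely} by choosing the auxiliary split point $x^+$ with $x_1^+>0$ (so $L^+=x_1^+>0$), and then exploiting Lemma~\ref{homogeneity} to convert $\bel{B}^n(x^+,x_1^+)$ into $x_1^+ + \mathcal B(T_{x_1^+}x^+)$. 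Discarding the $L$-adjustment discards exactly the leverage that makes the abstract boundary argument work, which is why you were pushed toward explicit optimizers in the first place.
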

Inequality~\eqref{BB} immediately implies Lemma~\ref{B>A}. Indeed, by Lemma~\ref{homogeneity} and formula~\eqref{def_a},
$$
\bel{B}^n(x,L)=L+\mathcal{B}(T_Lx)\ge L+B(T_Lx)=A^n(x;L),
$$
as claimed.

\begin{remark}
\label{b=0}
Note that on $\Gamma_0\cap\Omega_*$ both $\cma{\mathcal{B}}$ and $B$ are 0. For $\mathcal{B},$ this is the result of Lemma~\ref{bc}. For $B,$ this follows from~\eqref{d_0} in $\Omega_0;$ \eqref{d_1} in $\Omega_1;$ \eqref{d_2} in $\Omega_2;$ and \eqref{d_3+} in the rest of $\Omega_*.$
\end{remark}

\begin{definition}
\label{d_opt}
If $x\in\Omega_*,$ we say that a sequence of functions $\{\varphi_j\}$ on $\rn$ is an optimizing sequence for $B$ at $x$ if each $\varphi_j\in F_x$ and
\eq[lim]{
\av{N\varphi_j}{Q_0}\to B(x)~\text{as}~j\to\infty.
}
\end{definition}
The standard way to show a statement like~\eqref{BB} is to demonstrate an optimizing sequence for every point $x\in\Omega_*.$ In many dyadic problems, the Bellman function is seen to be concave in a certain sense directly from definition, which might allow one to get away with finding optimizers only on the boundary of the domain (cf.~\cite{alpha_trees}). In our current setting, we are able to go further still: combining the concavity properties of the function $\bel{B}^n$ given in Lemma~\ref{mi} with the geometric structure of the function $B,$ we can prove~Lemma~\ref{l=0} without relying on explicit optimizers at all. However, since such optimizers are of independent interest, we do present an optimizing sequence for the key point $x=(0,1)$ later in the section. (Since this is the sequence on which the $\BMO\to\BLO$ norm of the operator $N$ is attained in the limit, we call it the norm-optimizing sequence.)

We first need to establish~\eqref{BB} for the case when $x$ is on the boundary of $\Omega_*.$ Remark~\ref{b=0} leaves us with the right boundary $\{(0,y)\colon 0\le y\le 1\}$ and the top boundary $\Gamma_1\cap\Omega_*$ to consider. The arguments for these two cases are somewhat different, but they both really heavily on the result of Lemma~\ref{mi} and the related fact that $\mathcal{B}$ is locally concave on $\Omega_*.$ \cma{(We call a function locally concave on a domain, if it is concave on any convex subdomain.)}

\begin{lemma}
\label{loc_conc}
$\mathcal{B}$ is locally concave on $\Omega_*.$
\end{lemma}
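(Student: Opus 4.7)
To prove that $\mathcal{B}$ is locally concave on $\Omega_*$, the plan is to upgrade the single-weight concavity delivered by Lemma~\ref{mi} to full concavity along every line segment contained in $\Omega_*$.

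First, I would observe that the construction in the proof of Lemma~\ref{mi} generalizes readily to a dense family of weights. Instead of partitioning $Q$ into $2^n$ first-generation dyadic subcubes and placing $\varphi^+$ on one of them, one may partition $Q$ into $2^{nm}$ subcubes of generation $m$ (for any $m\ge 1$) and allot $\varphi^+$ to any $k$ of them, with $\varphi^-$ on the remaining $2^{nm}-k$. The same limiting argument then produces
\begin{equation*}
\bel{B}^n(x,L)\ge (1-\beta)\bel{B}^n(x^-,L^-)+\beta\bel{B}^n(x^+,L^+),
\end{equation*}
for any $\beta=k/2^{nm}$, under the hypotheses of Lemma~\ref{mi}. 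Restricting to $L=0$ and $x^\pm$ in a common convex subdomain of $\Omega_*$ (so that $L^\pm=\max\{x_1^\pm,0\}=0$) yields
\begin{equation*}
\mathcal{B}\bigl((1-\beta)x^-+\beta x^+\bigr)\ge (1-\beta)\mathcal{B}(x^-)+\beta\mathcal{B}(x^+)
\end{equation*}
for every $\beta$ in the dense set $D:=\{k/2^{nm}:m\ge1,\,0\le k\le 2^{nm}\}$.

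Second, to extend this inequality from $D$ to arbitrary $\beta\in[0,1]$, I would invoke the upper bound $\mathcal{B}(x)\le B_0(x)=A_0(x;0)$ from~\eqref{b_0}. The function $B_0$ is continuous on $\Omega$, hence $\mathcal{B}$ is locally bounded above on $\Omega_*$. Since $D$ contains $1/2$ (take $k=2^{nm-1}$), the restricted concavity already delivers midpoint concavity of $\mathcal{B}$ on every convex subdomain. A classical Bernstein--Doetsch--Sierpinski-type theorem then forces any midpoint-concave function bounded above on compact subsets to be continuous, and hence concave, on every convex subdomain of $\Omega_*$.

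The principal obstacle is this final extension step. Because $\mathcal{B}$ is defined as a supremum of test-function averages, it is naturally lower semi-continuous but not automatically upper semi-continuous, so one cannot simply pass to limits in the dyadic-rational concavity inequalities. The explicit continuous majorant $B_0$ provides precisely the upper regularity missing from the supremum-definition, and this is what enables the Bernstein--Doetsch extension to close the argument.
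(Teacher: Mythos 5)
Your Step 1---replicating the construction of Lemma~\ref{mi} with $Q_0$ partitioned into $2^{nm}$ generation-$m$ subcubes, $k$ of them carrying a rescaled $\varphi_j^+$ and the rest a rescaled $\varphi_j^-$---is sound and does give the concavity inequality for all $\beta\in D=\{k/2^{nm}\}$. Note, though, that the paper avoids the density argument entirely: it writes $Q_0$ (up to measure zero) as a \emph{countably infinite} disjoint union of dyadic subcubes whose two total measures are exactly $1-\gamma$ and $\gamma$, which delivers the inequality for arbitrary $\gamma\in(0,1)$ in one pass, with no need for a Bernstein--Doetsch extension.

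Your Step 2 contains a genuine gap: for a \emph{midpoint-concave} function, the Bernstein--Doetsch theorem requires local boundedness \emph{below}, not above. (Dually, midpoint-\emph{convex} plus bounded \emph{above} gives convexity.) An upper bound alone is not enough: if $g$ is a discontinuous additive (Hamel) function on $\mathbb{R}$, then $f=-|g|$ satisfies $f\big(\tfrac{x+y}{2}\big)\ge\tfrac{f(x)+f(y)}{2}$, is bounded above by $0$, yet is nowhere continuous and not concave. So invoking the majorant $B_0\ge\mathcal{B}$ from \eqref{b_0} does not close the argument, and the appeal to ``upper regularity'' coming from $B_0$ is the wrong-sided regularity. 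The needed ingredient is cheap, however: $\mathcal{B}\ge0$ on $\Omega_*$, because for every $\varphi\in E_{x,0,Q_0}$ the condition $\sup_{R\supset Q_0}\av{\varphi}R=0$ forces $N\varphi\ge0$ on $Q_0$, hence $\av{N\varphi}{Q_0}\ge0$. With that lower bound in hand, your Bernstein--Doetsch route would go through. Your remark about lower semicontinuity points in the same wrong direction---it is upper semicontinuity or a lower bound, not lower semicontinuity or an upper bound, that would let you pass to the limit in $\mathcal{B}(x_k)\ge(1-\beta_k)\mathcal{B}(x^-)+\beta_k\mathcal{B}(x^+)$.
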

\begin{proof}
Take any two points $x^-, x^+\in\Omega_*$ such that the entire line segment $[x^-,x^+]$ lies in $\Omega_*.$ Let $\{\varphi^-_j\}$ and $\{\varphi^+_j\}$ be the optimizing sequences for $B$ at $x^-$ and $x^+,$ respectively. Take any $\gamma\in(0,1)$ and split $Q_0$ into a union of two sets, one of measure $1-\gamma$ and the other of measure $\gamma.$ Each of the two sets can be written (up to measure zero) as a union of disjoint dyadic subcubes of $Q_0.$ Thus, 
$$
Q_0\approx \Big(\bigcup_k Q^-_k\Big)\cup\Big(\bigcup_k Q^+_k\Big),
$$
where $\sum_k|Q^-_k|=1-\gamma,$ $\sum_k|Q^+_k|=\gamma,$ and ``$\approx$'' means equality up to measure zero.

Define a new sequence $\{\varphi_j\}$ on $\rn$ 
by setting $\varphi_j$ to be 0 on $\rn\setminus Q_0;$ \cma{a rescaled copy of $\varphi_j^-$ on each $Q_k^-;$ and a rescaled copy of $\varphi_j^+$ on each $Q_k^+.$} Clearly, each $\varphi_j\in F_{(1-\gamma)x^-+\gamma x^+}$ and 
\begin{align*}
\mathcal{B}((1-\gamma)x^-+\gamma x^+)&\ge\av{N\varphi_j}{Q_0}=\sum_k|Q_k^-|\av{N\varphi_j}{Q_k^-}+\sum_k|Q_k^+|\av{N\varphi_j}{Q_k^+}\\
&=(1-\gamma)\av{N\varphi^-_j}{Q_0}+\gamma \av{N\varphi^+_j}{Q_0}.
\end{align*}
The right-hand side converges to $(1-\gamma)\mathcal{B}(x^-)+\gamma\mathcal{B}(x^+)$ as $j\to\infty,$ proving the claim.
\end{proof}

\begin{lemma}
\label{x_1=0}
$$
\mathcal{B}(0, y)\ge B(0, y)\quad\text{for all}~0\le y\le 1.
$$
\end{lemma}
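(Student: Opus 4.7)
The plan is to combine a scaling reduction with a self-referential functional inequality coming from Lemma~\ref{mi} and Lemma~\ref{loc_conc}. Throughout, write $g(y):=\mathcal{B}(0,y)$ and $r=2^n$.

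First, I will establish the scaling bound
$$g(y)\ge\sqrt{y}\,g(1),\qquad y\in[0,1].$$
Given any $\varphi\in F_{(0,1)}$, the rescaled function $\sqrt{y}\,\varphi$ belongs to $F_{(0,y)}$: the mean and the external supremum $\sup_{R\supset Q_0}\av{\cdot}R$ both remain zero, the second moment becomes $y$, and the BMO norm drops to $\sqrt{y}\le 1$. Positive homogeneity of $N$ gives $\av{N(\sqrt{y}\varphi)}{Q_0}=\sqrt{y}\,\av{N\varphi}{Q_0}$; taking suprema yields the bound. Hence the lemma reduces to showing $g(1)\ge 1$.

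Next, I apply Lemma~\ref{mi} at $(0,1)$ with $L=0$, using the split point $x^+=(q,q^2+1)\in\Gamma_1$ for small $q>0$. Then $L^+=q$, and by Lemma~\ref{homogeneity} together with the identity $T_q(q,q^2+1)=(0,1)$, the $x^+$-contribution is $q+g(1)$. The complementary point $x^-=(-q/(r-1),\,1-q^2/(r-1))$ lies in $\Omega_*$ for $q\in(0,(r-1)/\sqrt{r}]$. Invoking Lemma~\ref{loc_conc} along the horizontal segment at height $x_2^-=1-q^2/(r-1)$ connecting $(-\sqrt{x_2^-},x_2^-)\in\Gamma_0$ (where $\mathcal{B}=0$ by Remark~\ref{b=0}) to $(0,x_2^-)$, with $x^-$ corresponding to parameter $\mu=1-q/[(r-1)\sqrt{x_2^-}]$, yields $\mathcal{B}(x^-)\ge\mu\,g(x_2^-)$. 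Rearranging the Lemma~\ref{mi} inequality to isolate the $g(1)$-terms, I obtain
$$g(1)\ge\mu\,g(x_2^-)+\frac{q}{r-1}.$$

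To close the argument, I insert the scaling bound $g(x_2^-)\ge\sqrt{x_2^-}\,g(1)$ into the above. Using the cancellation $\mu\sqrt{x_2^-}=\sqrt{x_2^-}-q/(r-1)$, this becomes
$$g(1)\,\bigl[1-\sqrt{x_2^-}+q/(r-1)\bigr]\ge q/(r-1),$$
and the Taylor expansion $1-\sqrt{1-q^2/(r-1)}=q^2/[2(r-1)]+O(q^4)$ shows that the right-hand side divided by the bracket tends to $1$ as $q\downarrow 0$. Hence $g(1)\ge 1$, and the scaling bound then delivers $g(y)\ge\sqrt{y}=B(0,y)$ for $y\in[0,1]$. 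The main obstacle is ensuring the admissibility conditions for Lemmas~\ref{mi} and~\ref{loc_conc} hold uniformly as $q\downarrow 0$ so the limit step is rigorous; this reduces to a routine polynomial check on the constraints defining $\Omega$ and on $\mu\in[0,1]$.
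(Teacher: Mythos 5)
Your proof is correct, and it follows a genuinely different route from the paper's. Both arguments rely on the same three ingredients---the restricted concavity of $\bel{B}^n$ (Lemma~\ref{mi}), the translation invariance (Lemma~\ref{homogeneity}), and the local concavity of $\mathcal{B}$ on $\Omega_*$ (Lemma~\ref{loc_conc}) together with $\mathcal{B}=0$ on $\Gamma_0$---but the geometry and the limiting step differ. The paper splits $(0,y)$ \emph{horizontally}, taking $x^-=(2^{-n}\delta,y)\in\Omega_+$ and $x^+=(-(1-2^{-n})\delta,y)$, and then divides by $\delta$; the key analytical fact there is that $y\mapsto\mathcal{B}(0,y)$ is concave, so its one-sided derivative on $(0,1)$ is finite, and the difference quotient $\big(\mathcal{B}(0,y)-\mathcal{B}(0,y-2^{-2n}\delta^2)\big)/\delta$ vanishes as $\delta\to0$; the endpoint $y=1$ then needs separate treatment. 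You instead first exploit the positive homogeneity of the test-function class to reduce the whole family of inequalities to the single case $y=1$ (the scaling $\varphi\mapsto\sqrt{y}\,\varphi$ maps $F_{(0,1)}$ into $F_{(0,y)}$ and commutes with $N$ and with the external supremum since $\sqrt{y}>0$), and then you split $(0,1)$ with $x^+=(q,q^2+1)\in\Gamma_1$, whose Bellman value ties back to $g(1)$ via $T_q x^+=(0,1)$. The resulting self-referential inequality $g(1)\bigl[1-\sqrt{x_2^-}+q/(r-1)\bigr]\ge q/(r-1)$ closes the loop without any appeal to differentiability of $\mathcal{B}(0,\cdot)$, and the $q\downarrow0$ limit is a routine Taylor computation. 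Your reduction-to-$y=1$ step also removes the separate endpoint argument the paper needs. The admissibility checks (that $x^\pm$ lie in $\Omega$ for $q\le(r-1)/\sqrt{r}=\tau$, and that the horizontal segment through $x^-$ lies in $\Omega_0$) are correct. This is a clean alternative proof.
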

\begin{proof}
It suffices to show the claim for positive $y$, since for $y=0$ both $\mathcal{B}(0,y)$ and $B(0,y)$ vanish. Take a small parameter $\delta>0$. Let $x^-=(2^{-n}\delta,y)$ and $x^+=(-(1-2^{-n})\delta,y).$ Using Lemma~\ref{mi} with $L=L^+=0$ and $L^-=2^{-n}\delta,$ we get
\eq[concavity_1]{
 \mathcal{B}(0,y)\geq (1-2^{-n})\bel{B}^n(2^{-n}\delta,y,2^{-n}\delta)+2^{-n}\mathcal{B}(-(1-2^{-n})\delta,y).
}
By Lemma~\ref{homogeneity}, we have
$$ \bel{B}^n(2^{-n}\delta,y,2^{-n}\delta)=2^{-n}\delta+\bel{B}^n(0,y-2^{-2n}\delta^2,0)=2^{-n}\delta+\mathcal{B}(0,y-2^{-2n}\delta^2),$$
while by Lemma~\ref{loc_conc}, we have
\begin{align*}
 \mathcal{B}(-(1-2^{-n})\delta,y)&\geq \frac{\sqrt{y}-(1-2^{-n})\delta}{\sqrt{y}}\mathcal{B}(0,y)+\frac{(1-2^{-n})\delta}{\sqrt{y}}\mathcal{B}(-\sqrt{y},y)\\
 &=\frac{\sqrt{y}-(1-2^{-n})\delta}{\sqrt{y}}\mathcal{B}(0,y),
 \end{align*}
since $\mathcal{B}$ vanishes on the lower boundary of $\Omega_*.$

\cma{Plugging these two relations into \eqref{concavity_1} and dividing by $1-2^{-n}$} yields \begin{equation}\label{concavity_2}
 \mathcal{B}(0,y)-\mathcal{B}(0,y-2^{-2n}\delta^2)\geq -\frac{2^{-n}\delta}{\sqrt{y}}\mathcal{B}(0,y)+2^{-n}\delta.
\end{equation}
Now, recall that the function $y\mapsto \mathcal{B}(0,y)$ is concave on $[0,1]$; in particular, its one-sided derivatives exist and are finite on $(0,1)$. Assuming $y<1$, if we divide both sides of \eqref{concavity_2} by $\delta$ and let $\delta\to 0$, the left-hand side will vanish and we obtain $\mathcal{B}(0,y)\geq \sqrt{y}=B(0,y),$ as desired. Finally, for $y=1,$ we apply \eqref{concavity_2} together with the estimate $\mathcal{B}(0,1-2^{-2n}\delta^2)\geq \sqrt{1-2^{-2n}\delta^2}$ we have just established. Letting $\delta\to 0$ gives $\mathcal{B}(0,1)\geq 1,$ and the claim is proved.
\end{proof}

\begin{lemma}
\label{gamma_1}
$$
\mathcal{B}(v, v^2+1)\ge B(v,v^2+1)\quad\text{for all}~v\le0.
$$
\end{lemma}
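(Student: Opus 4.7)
The plan is induction on the index $k$ of the region $\Omega_k$ containing $(v, v^2+1)$ along $\Gamma_1$, starting at the base point $v = 0$, where Lemma~\ref{x_1=0} with $y = 1$ gives $\mathcal{B}(0, 1) \ge 1 = b(0)$. The algebraic identity
$$(v, v^2+1) = (1-\alpha)\bigl(v - \sqrt\alpha, (v-\sqrt\alpha)^2\bigr) + \alpha\bigl(v + \tau, (v+\tau)^2 + 1\bigr)$$
expresses $(v, v^2+1)$ as a $(1-\alpha, \alpha)$-weighted convex combination of a point on $\Gamma_0$ and a point on $\Gamma_1$ lying to the right. Feeding this splitting into Lemma~\ref{mi} with $L = 0$, using $\mathcal{B}|_{\Gamma_0} = 0$ (Lemma~\ref{bc}), and invoking Lemma~\ref{homogeneity} together with the base case to handle the right endpoint when $v + \tau > 0$, I obtain the key recursion
$$g(v) := \mathcal{B}(v, v^2+1) \ge \alpha\, h(v + \tau),$$
where $h(w) = g(w)$ for $w \le 0$ and $h(w) \ge w + 1$ for $w \ge 0$.

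Iterating the recursion in steps of $\tau$ reproduces $b(v) = \alpha^k(v + k\tau + 1)$ exactly on each affine subregion $\Omega_{2k} \cap \Gamma_1 = [-k\tau, p_k]$, and on each curved subregion $\Omega_{2k+1} \cap \Gamma_1 = [p_{k+1}, -k\tau]$ with $k \ge 1$ reduces the required bound to the same bound on $\Omega_{2k-1} \cap \Gamma_1$. By descent, the only case not immediately handled is the base curved region $v \in [p_1, 0]$, for which I must establish $g(v) \ge f(v+1)$.

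This last case is the main expected obstacle: the recursion only yields $g(v) \ge \alpha(v+\tau+1)$ on $[p_1, 0]$, which is strictly smaller than $f(v+1)$. To close the gap I would exploit the local concavity of $\mathcal{B}$ (Lemma~\ref{loc_conc}) together with the foliation $\{\ell_s\}$ of $\Omega_1$ along which $B$ is linear. Concretely, for small $\delta > 0$, I would apply Lemma~\ref{mi} to a perturbed splitting in which the left endpoint of the original splitting is replaced by a point sitting $\alpha\delta^2$ below $\Gamma_1$, then use local concavity along a vertical segment to relate that interior value back to the $\Gamma_1$-value $g(v + \alpha\delta)$. Expanding in $\delta$ and collecting second-order terms should yield a differential inequality for $g$ which, combined with the boundary data $g(0) \ge 1$ and $g(p_1) \ge \tfrac{\sqrt\alpha(1+\alpha)}{2} = f(p_1+1)$ (inherited from the affine calculation on the neighboring $\Omega_2$) and with the ODE $4f^2 = f'(1+f')^2$ satisfied by $f$ (as follows from the parameterization $y = (3s - 1/s)/2$, $f = s(1+s^2)/2$ of the foliation), should pin down $g \ge f(v+1)$ on the entire interval. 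The sharp implementation of this infinitesimal step --- a more intricate analogue of the derivative argument used in the proof of Lemma~\ref{x_1=0} --- is the most delicate part of the plan.
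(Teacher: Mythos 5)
Your recursion $g(v)\ge\alpha\,h(v+\tau)$, obtained by applying Lemma~\ref{mi} to the splitting $(v,v^2+1)=(1-\alpha)\bigl(v-\sqrt\alpha,(v-\sqrt\alpha)^2\bigr)+\alpha\bigl(v+\tau,(v+\tau)^2+1\bigr)$, is exactly what the paper uses for $\Omega_2$ and for the descent from $\Omega_{2k+1}\cup\Omega_{2k+2}$ ($k\ge1$) down to $\Omega_1\cup\Omega_2$. You have also correctly diagnosed the difficulty: on $\Omega_1\cap\Gamma_1$ this recursion only gives the affine lower bound $\alpha(v+\tau+1)$, which is strictly below the convex function $b(v)=f(v+1)$ in the interior of $[p_1,0]$ (the two only touch at $v=p_1$, not at $v=0$, where $\alpha(\tau+1)=\sqrt\alpha-\alpha\sqrt\alpha+\alpha<1=b(0)$).

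However, your proposed repair for $\Omega_1$ --- a second-order perturbation of the splitting feeding into local concavity and hopefully producing a differential inequality that pins $g$ to the ODE $4f^2=f'(1+f')^2$ --- is left as a sketch and is genuinely a gap. The mechanism is unspecified (which endpoint gets moved, in what direction, and how local concavity converts an interior value back to a $\Gamma_1$ value are all hand-waved), $g$ is not a priori differentiable, and the affine bound you start from is not even tangent to $b$ at $v=0$, so it is not clear the infinitesimal comparison closes. The paper takes a cleaner, non-infinitesimal route for $\Omega_1$: with $s$ the foliation parameter, one writes $P$ as a point on the extremal segment $[R,Q]$ with $R=(u,u^2)$, $Q=(v^+,(v^+)^2+1)$, in the proportion $P=s^2Q+(1-s^2)R$. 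Since $s^2\ge\alpha$, there is a point $R_1\in[P,R]$ with $P=(1-\alpha)R_1+\alpha Q$; Lemma~\ref{mi} then gives $\mathcal{B}(P)\ge(1-\alpha)\mathcal{B}(R_1)+\alpha(v^++\mathcal{B}(0,1))$, while local concavity of $\mathcal{B}$ along $[R,P]$ (with $\mathcal{B}(R)=0$) gives $\mathcal{B}(R_1)\ge\frac{|R_1-R|}{|P-R|}\mathcal{B}(P)$. Plugging the latter into the former produces a self-improving inequality in $\mathcal{B}(P)$, which upon solving yields exactly $\mathcal{B}(P)\ge s^2(1+v^+)=\tfrac12 s(1+s^2)=B(P)$. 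You should replace the proposed ODE argument for $\Omega_1$ by this self-improvement argument; the rest of your plan then goes through.
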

\begin{proof}
We first show the assertion for $P\cma{=}(v,v^2+1)\in \Omega_1\cup \Omega_2$. \cma{If} $P\in\Omega_1,$ then, according to~\eqref{vu_1}, $v=\frac12(3s-\frac1s)-1$ for some $s\in[2^{-n/2},1].$ Let $u=v-s,$ $v^+=v-s+\frac1s,$ and consider the auxiliary points
$$ Q=(v^+,(v^+)^2+1),\qquad R=(u,u^2).$$
It is easy to check that $P$ belongs to the segment $\cma{[Q,R]}$ (in fact, we have $ P=s^2Q+(1-s^2)R$) and that the entire segment $\cma{[P,R]}$ is contained in $\Omega$. 
Since $s^2\ge 2^{-n},$ there is a point $R_1$ belonging to $\cma{[P,R]}$ such that $P=(1-2^{-n})R_1+2^{-n}Q$. Note that $\cma{v^+}=\frac12(s+\frac1s)-1\ge0.$ Hence, Lemma~\ref{mi} with $L=L^-=0$ and $L^+=\cma{v^+}$ yields
$$ 
\mathcal{B}(P)\geq (1-2^{-n})\mathcal{B}(R_1)+2^{-n}\bel{B}^n(Q, \cma{v^+}).
$$
By Lemma~\ref{homogeneity},
\begin{equation}\label{concavity_3}
\mathcal{B}(P)\geq (1-2^{-n})\mathcal{B}(R_1)+2^{-n}(\cma{v^+}+\mathcal{B}(0,1)).
\end{equation}

But $\mathcal{B}$ is locally concave on $\Omega_*,$ so
$$\mathcal{B}(R_1)\geq \frac{|R_1-R|}{|P-R|}\,\mathcal{B}(P)+\frac{|P-R_1|}{|P-R|}\,\mathcal{B}(R)=\frac{|R_1-R|}{|P-R|}\,\mathcal{B}(P).$$
Plugging this into \eqref{concavity_3} and using the estimate $\mathcal{B}(0,1)\geq 1$ established in the previous lemma, we obtain an inequality equivalent to
$$ \mathcal{B}(P)\geq s^2(1+\cma{v^+})=\frac12\,s(1+s^2)=B(P),$$
where the last equality is just formula~\eqref{d_1} with $x_1=v.$ 

For $P\in \Omega_2$ the reasoning is similar \cma{and even simpler. We can take $R=R_1$ on $\Gamma_0,$ $R=\big(v-2^{-n/2},(v-2^{-n/2})^2\big);$ then $Q=\big(v+\tau,\left(v+\tau\right)^2+1\big)$ will be the point of intersection of $\Gamma_1$ with the line passing through $P$ and $R.$ Furthermore, the point $P$ splits the segment $[Q,R]$ in the right proportion $P=(1-2^{-n})R+2^{-n}Q.$ Therefore, we can use Lemma~\ref{mi} with $L=L^-=0$ and $L^+=v+\tau,$ since $v+\tau\ge0$:}
\begin{align*}
 \mathcal{B}(P)&\geq (1-2^{-n})\mathcal{B}\big(v-2^{-n/2},(v-2^{-n/2})^2\big)+
 2^{-n}\bel{B}^n\big(v+\tau,\left(v+\tau\right)^2+1, v+\tau\big)\\
 &=2^{-n}\bel{B}^n\big(v+\tau,\left(v+\tau\right)^2+1, v+\tau\big).
 \end{align*}
Now Lemma~\ref{homogeneity} and Lemma~\ref{x_1=0} with $y=0$ give
 $$ \mathcal{B}(P)\geq 2^{-n}\left(v+\tau+\mathcal{B}(0,1) \right)\geq 2^{-n}\left(v+\tau+1\right)
 =B(P),$$
 where we again used the fact that $\mathcal{B}(0,1)\ge1,$ and the last equality is simply formula~\eqref{d_2} with $x_1=v$ and $\alpha=2^{-n}.$ 
 
 Finally, assume that $P\in\Omega_{2k+1}\cup\Omega_{2k+2}$ for some $k\ge1.$ Then $T_{-j\tau}P\in\Omega_*$ for all $j\le k$ and $T_{-k\tau}P\in\Omega_1\cup\Omega_2.$ In light of the identity
$$
T_{-j\tau}P=(1-2^{-n})(v-j\tau-2^{-n/2},(v-j\tau-2^{-n/2})^2)+2^{-n}T_{-(j+1)\tau}P,
$$
a repeated application of Lemma~\ref{mi} with $L^\pm=L=0,$ along with Lemma~\ref{bc}, gives
\begin{align*}
\mathcal{B}(P)&\ge (1-2^{-n})\mathcal{B}(v-2^{-n/2},(v-2^{-n/2})^2)+2^{-n}\mathcal{B}(T_{-\tau}P)\\
&=2^{-n}\mathcal{B}(T_{-\tau}P)\ge\dots\ge 2^{-kn}\mathcal{B}(T_{-k\tau}P)\ge 2^{-kn}B(T_{-k\tau}P)= B(P),
\end{align*}
where the last equality comes from formula~\eqref{d_3+}. \cma{We have considered all applicable cases and the proof is complete}.
\end{proof}

We are now in a position to prove Lemma~\ref{l=0}.

\begin{proof}[Proof of Lemma~\ref{l=0}]
Every interior point of $x\in\Omega_*$ lies on a line segment $m_x$ contained entirely in $\Omega_*,$ connecting a point $U_x\in\{(u,u^2)\colon u\le0\}$ and a point $V_x\in\{(v,v^2+1)\colon v\le0\}\cup\{(0,y)\colon 0\le y\le 1\},$ and such that $B$ is linear along $m_x.$ Indeed, in $\Omega_0,$ $m_x$ is the horizontal segment $\{(t,x_2)\colon -\sqrt{x_2}\le t\le0\};$ in $\Omega_1$ and $\Omega_2,$ $m_x=\ell_{s(x)}$ given by~\eqref{vu_1} and~\eqref{vu_2}, respectively; and in the rest of $\Omega_*,$ $m_x$ is the image of a segment from $\Omega_1\cup\Omega_2$ under the transformation~\eqref{d_3+}, i.e., for an appropriate $k,$ $m_x=T_{k\tau}\ell_{s(\tilde x)},$ where $\tilde x=T_{-k\tau}x.$

Take $x\in\Omega_*.$ If $x$ is on the boundary of $\Omega_*,$ then the statement of the lemma follows from either Remark~\ref{b=0}, Lemma~\ref{x_1=0}, or Lemma~\ref{gamma_1}. If $x$ is in the interior of $\Omega_*,$ then we can write $x=(1-\gamma)U_x+\gamma V_x,$ \cma{where $U_x$ and $V_x$ are the endpoints of the corresponding segment $m_x.$} Then
$$
\mathcal{B}(x)\ge (1-\gamma) \mathcal{B}(U_x)+\gamma\mathcal{B}(V_x)=\gamma \mathcal{B}(V_x)\ge\gamma B(V_x)=(1-\gamma)B(U_x)+\gamma B(V_x)=B(x).\qedhere
$$
\end{proof}

\subsection{Norm-optimizing sequence}
We give the optimizing sequence for the candidate $B$ at the point $(0,1).$ As noted in the beginning of this section, this sequence is not needed to prove Lemma~\ref{BB}. However, its structure seems to us to be of interest, as it reflects the dual nature of the extremal problem we are solving. To explain: the Bellman formulation~\eqref{b_test}--\eqref{b_main} is \cma{similar to} the original $L^2$-formulation for the dyadic maximal function from~\cite{nt}, except the test functions are restricted to have BMO norm no more than 1. Accordingly, the optimizer we give below can be seen both as a special rearrangement of the dyadic logarithm from~\cite{alpha_trees}, designed to \cma{maximize that logarithm's} BMO norm, and as a close relative of the $L^2$-optimizer for the classical dyadic maximal operator constructed in~\cite{ssv} and~\cite{cincy}. 

\cma{
To construct the desired optimizing sequence, we first fix a positive integer $j$ and define on the interval $I_0:=(0,1)$ an auxiliary function $\psi$ using the following recursive formula:
\eq[psi]{
\psi(t)=
\begin{cases}
-\gamma,& 0<t\le 2^{-j},\\
\psi(2^kt-1),& 2^{-k}<t\le 2^{-k+1},~1<k\le j,\\
\psi(2t-1)+\delta,&\frac12<t<1.
\end{cases}
}
Let us verify that this formula uniquely defines $\psi$ almost everywhere for any fixed parameters $\gamma$ and $\delta.$
To that end, let us inductively define a sequence $\{\psi^{(m)}\}$ on some subset of $I_0:$
\eq[jm1]{
\psi^{(1)}(t)=\begin{cases}
-\gamma,& 0<t\le 2^{-j},
\\
{\rm not~defined},&2^{-j}<t<1;
\end{cases}
}
\eq[jm2]{
\psi^{(m)}(t)=\begin{cases}
-\gamma,& 0<t\le 2^{-j},\\
\psi^{(m-1)}(2^kt-1),& 2^{-k}<t\le 2^{-k+1},~1<k\le j,\\
\psi^{(m-1)}(2t-1)+\delta,&\frac12<t<1.
\end{cases}
}
Observe that the measure of the set where the function $\psi^{(m)}$ is not defined has measure $(1-2^{-j})^m,$ while at the points where it is defined, it will not change the value at the next step.
Therefore, this sequence converges to a function $\psi$ defined almost everywhere on $I_0$ and satisfying the recursive relation~\eqref{psi}.

The same argument proves the uniqueness of this function. Indeed, the difference of any two such functions would satisfy the same relation with $\gamma=\delta=0.$ Arguing as we did with the sequence $\{\psi^{(m)}\},$ we can guarantee that ``on the first step'' this difference equals zero on a set of measure no less than $2^{-j}.$ After using~\eqref{psi} $m$ times, we see that this difference equals zero on the set of measure no less than $1-(1-2^{-j})^m.$ Hence, it is zero almost everywhere.

Now, for any index $j\ge1$ let us choose the parameters $\gamma_j$ and $\delta_j$ so that the solution of~\eqref{psi} has average $0$ on $I_0$ and so that its square has average 1. It is easy to compute that we have to take 
\eq[gd]{
\gamma_j=\frac1{\sqrt{1+2^{1-j}}},\qquad \delta_j=\frac{2^{1-j}}{\sqrt{1+2^{1-j}}}.
}
Let $\psi_j$ be the function given by~\eqref{psi} with $\gamma=\gamma_j$ and $\delta=\delta_j$ and extended outside $I_0$ by zero. The one-dimensional maximal function $N_1\psi_j$ on $I_0$ will be determined only by the values of $\psi_j$ on $I_0$ and, hence, it will satisfy the same relation~\eqref{psi} with $\gamma=0$ and $\delta=\delta_j.$ Since the solution of~\eqref{psi} is unique, we conclude that
\eq[Mpsi]{
N_1\psi_j(t)=\psi_j(t)+\gamma_j.
}

The following lemma summarizes the key properties of the sequence $\{\psi_j\},$ some of which have already been noted. We leave its purely computational proof as an exercise for the reader. 
\begin{lemma}
\label{lemma_psi}
The sequence $\{\psi_j\}$ satisfies 
\begin{align*}
\forall j, &\quad \psi_j\in\BMO^d(I_0),\quad \|\psi_j\|_{\BMO^d(I_0)}=1;\\
\forall j, &\quad \av{\psi_j}{I_0}=0, \quad \av{\psi_j^2}{I_0}=1;\\
\forall j, &\quad \inf_{I_0}N_1\psi_j=0;\\
&\av{N_1\psi_j}{I_0}\longrightarrow 1,~\text{as}~j\to\infty.
\end{align*}
\end{lemma}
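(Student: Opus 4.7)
The plan is to exploit the self-similar structure of $\psi_j$ built into the recursion~\eqref{psi}.

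Setting $m=\av{\psi_j}{I_0}$ and $m_2=\av{\psi_j^2}{I_0}$ and decomposing $I_0$ into $(0,2^{-j}]$, the union $\bigsqcup_{k=2}^{j}(2^{-k},2^{-k+1}]$ of \emph{copy} intervals, and $(1/2,1)$, I would use that $\psi_j$ on each copy interval is either a plain rescaling or a rescaling plus shift of $\psi_j|_{I_0}$ to derive
\[
m=-2^{-j}\gamma+(1-2^{-j})m+\tfrac12\delta,\qquad m_2=2^{-j}\gamma^2+(1-2^{-j})m_2+\tfrac12(m_2+2m\delta+\delta^2).
\]
Imposing $m=0$ and $m_2=1$ forces $\delta=2^{1-j}\gamma$ and $\gamma^2(1+2^{1-j})=1$, which is~\eqref{gd} and yields property~(2). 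For property~(1), I would show that the variance over every dyadic $J\subseteq I_0$ is at most~$1$. The only subintervals to which the self-similar reduction does not directly apply are the ``leftmost'' ones $(0,2^{-l}]$ with $1\le l<j$, which straddle the constant piece and several copies; for these a direct computation using the mean-zero property of the copies gives variance $1-2^{l-j}(2^{1-j}+2^{l-j})/(1+2^{1-j})<1$. Combined with variance~$1$ on $I_0$ itself, this proves property~(1).

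Properties~(3) and~(4) both reduce to the identity~\eqref{Mpsi}. Since $\psi_j$ vanishes outside $I_0$ and $\av{\psi_j}{I_0}=0$, every dyadic interval strictly containing $I_0$ contributes average~$0$; hence for $t\in I_0$, $N_1\psi_j(t)$ is the supremum of~$0$ and of averages of $\psi_j$ over dyadic subintervals of $I_0$ containing $t$. Writing $\Phi=N_1\psi_j$, I would verify that $\Phi$ satisfies~\eqref{psi} with $\gamma=0$, $\delta=\delta_j$: on $(0,2^{-j}]$ all ancestral averages are nonpositive so $\Phi\equiv 0$; on each copy interval the ancestral averages are $\le 0$ and the maximum over proper subintervals is a rescaled copy of $\Phi|_{I_0}$, so $\Phi(t)=\Phi(2^kt-1)$; and on $(1/2,1)$ the uniform shift by $\delta_j$ propagates, giving $\Phi(t)=\Phi(2t-1)+\delta_j$. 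Since $\Psi:=\psi_j+\gamma_j$ also obeys this zero-$\gamma$ recursion, the uniqueness argument given immediately before the lemma forces $\Phi=\Psi$ a.e., which is~\eqref{Mpsi}. Property~(3) then follows from $\min_{I_0}\psi_j=-\gamma_j$, and property~(4) from $\av{N_1\psi_j}{I_0}=\av{\psi_j}{I_0}+\gamma_j=\gamma_j\to 1$ as $j\to\infty$.

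The main obstacle is the case analysis required in both the BMO bound and the maximal-function identity: in each case one must account for dyadic intervals straddling the constant piece $(0,2^{-j}]$ and several copy intervals, which lie outside the direct scope of the self-similar reduction. Once that bookkeeping is handled, every remaining step is a short computation.
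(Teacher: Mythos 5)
Your proposal is correct and follows essentially the approach the paper itself indicates: the paper ``leaves its purely computational proof as an exercise,'' but it already supplies the key scaffolding in the text immediately preceding the lemma --- the self-similar moment computations that force~\eqref{gd}, and the identity~\eqref{Mpsi} obtained by checking that $N_1\psi_j$ satisfies~\eqref{psi} with $\gamma=0$, $\delta=\delta_j$ and then invoking the uniqueness argument. Your outline fills in exactly those computations, and the extra work you add (the variance bound on every dyadic subinterval via reduction to the straddling intervals $(0,2^{-l}]$, $1\le l<j$) is the right way to establish $\|\psi_j\|_{\BMO^d(I_0)}=1$, which the paper does not spell out.

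Two small points. First, there is a typo in your second-moment equation: since you already folded the right-half copy's second moment into the coefficient $(1-2^{-j})m_2$, the final bracket should read $\frac12(2m\delta+\delta^2)$ rather than $\frac12(m_2+2m\delta+\delta^2)$; as written the displayed equation would give a negative $m_2$. (Your stated conclusion $\gamma^2(1+2^{1-j})=1$ is nonetheless correct, so this is only a transcription slip.) Second, property~(3) uses $\inf_{I_0}\psi_j=-\gamma_j$ without remark; this does need the one-line observation that $\psi_j\ge-\gamma_j$ a.e.\ (immediate by induction on the approximating sequence $\psi^{(m)}$, since $\delta_j>0$), together with $\psi_j\equiv-\gamma_j$ on $(0,2^{-j}]$, so that $N_1\psi_j=\psi_j+\gamma_j\ge 0$ with equality attained.
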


To extend the one-dimensional sequence $\{\psi_j\}$ to higher dimensions, 
let 
\eq[eta_j]{
\varphi_j(t_1,t_2,...,t_n)=
\psi_j(t_1).
}
The sequence $\{\varphi_j\}$ is immediately seen to be an optimizing sequence for the Bellman candidate $B$ at the point $(0,1)$ in the sense of Definition~\ref{d_opt}. Indeed, both the inclusion $\varphi_j\in F_{(0,1)}$ and the condition~\eqref{lim} follow at once from Lemma~\ref{lemma_psi}. 

Lastly, we fulfill the promise made in the introduction and provide a norm-optimizing sequence for the classical dyadic operator $M.$ 
Note that the sequence $\{\varphi_j+\gamma_j\}$ is non-negative on $\rn;$ therefore, on this sequence the operators $N$ and $M$ coincide. Furthermore, 
$$
\|\varphi_j+\gamma_j\|_{\BMO^d(\rn)}=\|\varphi_j\|_{\BMO^d(\rn)}=1
$$ 
and 
$$
\inf_{Q_0}M(\varphi_j+\gamma_j)=\inf_{Q_0}N(\varphi_j+\gamma_j)=\gamma_j.
$$
Therefore,
$$
\av{M(\varphi_j+\gamma_j)}{Q_0}-\inf_{Q_0}M(\varphi_j+\gamma_j)=\av{N(\varphi_j+\gamma_j)}{Q_0}-\gamma_j=\av{N\varphi_j}{Q_0}\to1,
$$
which means that $\|M\|_{\BMO\to\BLO}\ge 1.$}
\section{$B$ is $\alpha$-concave}
\label{alpha_concavity}
In this section, we prove Lemma~\ref{a-conc}, i.e., establish the fact that the function $B$ defined by~\eqref{domains}--\eqref{d_3+} is $\alpha$-concave on $\Omega.$ To that end, we will use a lemma from~\cite{alpha_trees} giving sufficient conditions for $\alpha$-concavity. Specifically, Lemma~2.5 of that paper contains the following statement (up to a slight change in notation).
\begin{lemma}
\label{L2}
Let $\alpha\in(0,\frac12].$
Assume that a function $B$ on $\Omega$ satisfies the following three conditions\textup:
\ben[label=(\arabic*),font=\upshape]
\item
\label{C1}
$B$ is locally concave on $\Omega\cma{.}$ 
\item
\label{C2}
$B$ has non-tangential derivatives at every point \cma{of} $\Gamma_1.$ Furthermore, for any two distinct points on $\Gamma_1,$  $P=(p,p^2+1)$ and $Q=(q,q^2+1)$ with $|p-q|\le\tau,$
\begin{align*}
(D_{\scriptscriptstyle\overrightarrow{PQ}}B)(P)&\ge (D_{\scriptscriptstyle\overrightarrow{PQ}}B)(Q),
\end{align*}
where $D_{_{\overrightarrow{PQ}}}$ denotes the derivative in the direction of the vector $\overrightarrow{PQ}.$
\item
\label{C3}
For any $P$ and $Q$ as above\textup, and $R\cma{:=}\frac1{1-\alpha}(P-\alpha Q),$
\begin{align*}
B(P)&\ge (1-\alpha)\, B(R)+\alpha\, B(Q).
\end{align*}
\een
Then $B$ is $\alpha$-concave on $\Omega.$
\end{lemma}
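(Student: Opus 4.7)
The plan is to verify the three hypotheses of Lemma~\ref{L2} for the function $B$ defined by~\eqref{domains}--\eqref{d_3+}. A general organizing principle is that the quasi-periodicity~\eqref{d_3+} together with linearity of the shift $T_a$ reduces every statement to the basic regions $\Omega_+$, $\Omega_0$, $\Omega_1$, $\Omega_2$: if $P,Q$ sit in $\Omega_{2k+1}\cup\Omega_{2k+2}$, then $T_{-k\tau}P,\,T_{-k\tau}Q \in \Omega_1\cup\Omega_2$, and every relevant inequality simply rescales by the factor $\alpha^k$.

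For condition~\ref{C1} (local concavity), inside each region the function is either visibly concave by a Hessian computation (in $\Omega_+$ via $B = x_1 + \sqrt{x_2 - x_1^2}$ and in $\Omega_0$ via $B = x_1 + \sqrt{x_2}$), or linear along the foliating segments $\ell_s$ from~\eqref{vu_1}--\eqref{d_2}, in which case concavity reduces to a one-variable transverse calculation. Concavity in the remaining regions follows immediately via the shift relation~\eqref{d_3+}. The delicate piece is $C^1$-matching of $B$ across the interfaces between adjacent regions; I expect the breakpoints $p_k$ in~\eqref{tau} and the formulas~\eqref{vu_1}, \eqref{vu_2} to be precisely calibrated so that the tangent plane extends continuously, after which local concavity of the glued function on any convex subset is a routine check.

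For condition~\ref{C2}, with $b(p) := B(p,p^2+1)$ as in Lemma~\ref{b_bound}, the derivative $D_{\overrightarrow{PQ}}B$ at a point of $\Gamma_1$ decomposes into a tangential part controlled by $b'$ and a normal part controlled by $\partial_{x_2} B|_{\Gamma_1}$, whose positivity is already noted in Lemma~\ref{b_inc}. The desired monotonicity $(D_{\overrightarrow{PQ}}B)(P)\ge (D_{\overrightarrow{PQ}}B)(Q)$ for $|p-q|\le \tau$ then amounts to concavity of $b$ on each interval of length $\tau$ plus monotonicity of the transverse derivative. Both follow directly from the explicit piecewise formula~\eqref{bp}--\eqref{gy}: $b$ is affine on each $[-k\tau, p_k]$ (trivial), and $b(p) = \alpha^k f(p + k\tau + 1)$ on $[p_{k+1}, -k\tau]$, so what is needed is concavity of $f$ on $[0,1]$ together with matching of one-sided derivatives at the breakpoints $p_k$ and $-k\tau$.

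The main obstacle will be condition~\ref{C3}: for $P,Q\in\Gamma_1$ with $|p-q|\le\tau$, one must show
\[
B(P)\ge (1-\alpha)\,B(R)+\alpha\,B(Q),\qquad R:=\tfrac{1}{1-\alpha}(P-\alpha Q),
\]
where $R$ lies \emph{below} $\Gamma_1$ on the line through $P$ and $Q$ and in general falls into a region distinct from those containing $P$ and $Q$. After reducing by quasi-periodicity to $P\in\Gamma_1\cap(\Omega_0\cup\Omega_1\cup\Omega_2)$, I will split into sub-cases according to (i) which of $\Omega_0,\Omega_1,\Omega_2$ contains $Q$, and (ii) into which region $R$ falls; the constraint $|p-q|\le\tau$ keeps this list short. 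In each sub-case both sides are explicit in $p$, $q$, and $\alpha$: the relevant foliation parameter $s$ in~\eqref{vu_1} or~\eqref{vu_2} is chosen so that $B$ is linear along the segment $[R,Q]$ containing $P$ precisely when equality holds, which suggests that the inequality is rigid and collapses to a one-variable estimate in $s$ (or in $p-q$) once the correct region for $R$ is identified. The principal difficulty, more than any single calculation, is arranging the case analysis so that every configuration permitted by $|p-q|\le\tau$ is covered without redundancy; once that bookkeeping is in place, Lemma~\ref{L2} delivers the $\alpha$-concavity of $B$ on all of $\Omega$.
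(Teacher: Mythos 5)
Your proposal does not address the statement it is supposed to prove. Lemma~\ref{L2} is a general implication about an \emph{arbitrary} function $B$ on $\Omega$: \emph{if} $B$ satisfies conditions \ref{C1}--\ref{C3}, \emph{then} $B$ is $\alpha$-concave, i.e.\ $B((1-\beta)x^-+\beta x^+)\ge(1-\beta)B(x^-)+\beta B(x^+)$ for all $\beta\in[\alpha,\tfrac12]$ and all $x^\pm\in\Omega$ with $(1-\beta)x^-+\beta x^+\in\Omega$. What you outline instead is a verification that the \emph{particular} candidate $B$ of~\eqref{domains}--\eqref{d_3+} satisfies conditions \ref{C1}--\ref{C3}; that is the content of Lemma~\ref{a-conc}, carried out in the paper as Lemmas~\ref{lc}, \ref{dirder}, and~\ref{main_alpha}, and it \emph{presupposes} Lemma~\ref{L2} rather than proving it. (In the paper, Lemma~\ref{L2} itself is not re-proved but quoted from Lemma~2.5 of~\cite{alpha_trees}.) Nothing in your text engages the actual difficulty of the implication: $\Omega$ is non-convex because the region above $\Gamma_1$ is excluded, so a chord $[x^-,x^+]$ whose convex combination lands in $\Omega$ may leave $\Omega$ through $\Gamma_1$. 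One must show that local concavity on the sub-segments of $[x^-,x^+]$ lying inside $\Omega$, combined with the boundary inequality \ref{C3} for the extremal configuration $P=(1-\alpha)R+\alpha Q$ and the directional-derivative monotonicity \ref{C2} along $\Gamma_1$ (which lets one pass from that extremal configuration to an arbitrary splitting ratio $\beta\ge\alpha$ and to chords cutting off an arbitrary arc of length at most $\tau$), yields the full inequality. None of this appears in your proposal, so as a proof of Lemma~\ref{L2} it is vacuous.

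Even read charitably as a plan for Lemma~\ref{a-conc}, the proposal only restates the structure already visible in the statement of Lemma~\ref{L2} and defers the genuinely hard work: condition \ref{C3} is not a short case check but requires the separate treatments of the configurations $p=v,\ q=v^+$ (Lemma~\ref{trajectory}), the tangent trajectories $v=v^+$ (Lemma~\ref{tangent}), the locally concave extension $\tilde B$ on $\tilde\Omega_1$ (Lemma~\ref{extension}), the reduction $p\le q$ via the sign of $H_{qq}$, and finally a two-parameter critical-point analysis of $W(\xi,\theta)$ showing there are no interior minima. Your phrase ``collapses to a one-variable estimate in $s$'' substantially underestimates this. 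You should first prove (or properly cite) Lemma~\ref{L2} itself, and only then present the verification of its hypotheses as a separate argument.
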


\begin{remark}
\label{period}
Recall that for $k\ge3$ and $x\in\Omega_k,$ we have
$$
B(x)=\alpha^{k}\,B(T_{k\tau}x).
$$
This fact will reduce the proof that $B$ satisfies Conditions~\ref{C2} and~\ref{C3} of Lemma~\ref{L2} to the consideration of the first several domains $\Omega_k$ and a few other special cases. For example, assume $P,Q\in\Omega_-$ and let $m^*$ be the largest integer $m$ such that $T_{-m\tau}P,T_{-m\tau}Q\in\Omega_-.$ Let $P^*=T_{-m^*\tau}P,$ $Q^*=T_{-m^*\tau}Q.$ Then 
$$
(D_{\scriptscriptstyle\overrightarrow{PQ}}B)(P)\ge (D_{\scriptscriptstyle\overrightarrow{PQ}}B)(Q)
\quad\Longleftrightarrow\quad 
(D_{\scriptscriptstyle\overrightarrow{P^*Q^*}}B)(P^*)\ge (D_{\scriptscriptstyle\overrightarrow{P^*Q^*}}B)(Q^*).
$$
A similar reduction applies to Condition~\ref{C3}.
\end{remark}
\cma{
\begin{remark}
Note that for the points $P$ $Q$ satisfying Condition~\ref{C2}, the point $R$ given in Condition~\ref{C3} always lies in $\Omega.$ Indeed, it is easy to show that $r_2-r_1^2=1-\frac{(p-q)^2}{\tau^2},$ thus, $r_2\ge r_1^2$ if and only if $|p-q|\le\tau.$
\end{remark}
}
We now verify the three conditions of Lemma~\ref{L2} as three separate lemmas.

\subsection{The proof of Condition \ref{C1} of Lemma~\ref{L2}}
\begin{lemma}
\label{lc}
$B$ is locally concave on $\Omega.$
\end{lemma}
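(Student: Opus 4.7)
The plan is to establish local concavity in two steps: (i) verify that $B$ is concave on (the interior of) each of the canonical subdomains appearing in~\eqref{domains}; and (ii) verify that $B$ is $C^1$ across each of the interior interfaces between adjacent subdomains. These two facts together suffice: given any convex subdomain $U\subset\Omega$ and any segment $[a,b]\subset U$, the restriction of $B$ to $[a,b]$ is continuous, piecewise concave, and $C^1$ at the finitely many transition points, hence concave on $[a,b]$; since $[a,b]$ was arbitrary, $B$ is concave on $U$.

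For step (i), the two explicit pieces are easy: on $\Omega_+$, the function $B(x)=x_1+\sqrt{x_2-x_1^2}$ has Hessian with $B_{x_1x_1}=-x_2/(x_2-x_1^2)^{3/2}\le 0$ and determinant $1/(4(x_2-x_1^2)^2)\ge 0$, so it is concave; on $\Omega_0$, $B(x)=x_1+\sqrt{x_2}$ is clearly concave. On $\Omega_1$ and $\Omega_2$, $B$ is defined implicitly by being linear along the foliating segments $\ell_s$, so the plan is to work in foliation coordinates $(s,\lambda)$, with $\lambda\in[0,1]$ a position along $\ell_s$, so that $x_1=(1-\lambda)u(s)+\lambda v(s)$ and $x_2=(1-\lambda)u(s)^2+\lambda(v(s)^2+1)$, and $B=\frac12 s(1+s^2)\lambda$ on $\Omega_1$ (with the analogous expression on $\Omega_2$ obtained from~\eqref{vu_2}, \eqref{d_2}). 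One then inverts the Jacobian of $(s,\lambda)\mapsto(x_1,x_2)$, computes the Hessian of $B$ in the original coordinates via the chain rule, and verifies $B_{x_2x_2}\le 0$ and $\det=0$ (the latter reflecting linearity along $\ell_s$). The resulting algebraic inequality is one-dimensional in $s$ on $[\sqrt\alpha,1]$ (respectively $[\alpha,\sqrt\alpha]$), closely tied to the calculation already carried out in Lemma~\ref{b_inc}. Concavity on $\Omega_{2k+1}\cup\Omega_{2k+2}$ for $k\ge 1$ is then automatic from~\eqref{d_3+}, since $T_{-k\tau}$ is an affine bijection on $\mathbb{R}^2$ and multiplication by the positive constant $\alpha^k$ preserves concavity.

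For step (ii), the interior interfaces are $\{x_1=0,\,0\le x_2\le 1\}$ between $\Omega_+$ and $\Omega_0$; the segment $\ell_1=\{-1\le x_1\le 0,\,x_2=1\}$ between $\Omega_0$ and $\Omega_1$; the segment $\ell_{\sqrt\alpha}$ between $\Omega_1$ and $\Omega_2$; and the parabolic translates of $\ell_\alpha$ between $\Omega_{2k}$ and $\Omega_{2k+1}$ for $k\ge 1$. Continuity of $B$ itself is already built into~\eqref{d_+}--\eqref{d_3+}. For the first two interfaces, direct computation from~\eqref{d_+}, \eqref{d_0}, \eqref{d_1} gives $\nabla B=(1,1/(2\sqrt{x_2}))$ on $\{x_1=0\}$ and $\nabla B=(1,1/2)$ on $\{x_2=1\}$ from \emph{both} sides. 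On $\ell_{\sqrt\alpha}$, the gradient match reduces to the geometric fact that this segment is a common member of the two foliations of $\Omega_1$ and of $\Omega_2$ (by construction, the values of $v(s),u(s)$ from~\eqref{vu_1} and from~\eqref{vu_2} agree at $s=\sqrt\alpha$, as does the linear coefficient $\frac12 s(1+s^2)=\frac\alpha 2(1+s^2/\alpha^2)$ at that endpoint), so $B$ is given by the \emph{same} linear function on $\ell_{\sqrt\alpha}$ from either side and the two linear extensions share a common tangent plane along it. The higher-$k$ interfaces are images of this case under the affine map $T_{k\tau}$ combined with the scalar factor $\alpha^k$ in~\eqref{d_3+}, so the matching transfers directly.

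The main obstacle is the Hessian calculation inside $\Omega_1$ and $\Omega_2$: because the foliation parameter $s$ is defined only implicitly via $x\in\ell_s$ and the transverse position $\lambda$ is a rational function of $(x_1,x_2,s)$, any direct attempt to eliminate $s$ and compute $B_{x_ix_j}$ leads to cumbersome expressions. The cleanest route, and the one I would pursue, is to work entirely in the $(s,\lambda)$ variables, use $B=\frac12 s(1+s^2)\lambda$ together with the two-by-two Jacobian of $(s,\lambda)\mapsto(x_1,x_2)$ read off from~\eqref{vu_1}, and reduce the concavity assertion to a single scalar inequality in $s\in[\sqrt\alpha,1]$; the $\Omega_2$ case is then identical after the substitution $s\mapsto\alpha/s$. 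The remaining verifications---interface gradient identities and the transfer via $T_{-k\tau}$---are routine.
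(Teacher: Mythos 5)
Your plan follows the paper's proof essentially step by step: concavity piece by piece via an implicit/foliation-coordinate Hessian calculation in $\Omega_1,\Omega_2$, transfer to the higher subdomains via the affine shift $T_{-k\tau}$ and the positive factor $\alpha^k$, and $C^1$ matching across the interior interfaces. Two small imprecisions are worth flagging. First, verifying only $B_{x_2x_2}\le 0$ and $\det D^2B=0$ does not by itself give negative semidefiniteness (consider $\mathrm{diag}(1,0)$); one must also check $B_{x_1x_1}\le 0$, which the paper does by showing $B_{x_1x_1}=(\mu-z)z_{x_1}\le 0$. Second, your justification for the gradient match across $\ell_{\sqrt\alpha}$ --- that $B$ restricts to the same affine function on that common extremal from both sides, hence ``the two linear extensions share a common tangent plane'' --- only controls the tangential derivative along the segment, not the normal one; note in fact that $v'(s),u'(s)$ from~\eqref{vu_1} and~\eqref{vu_2} do \emph{not} agree at $s=\sqrt\alpha,$ so the foliation parametrization has a corner there. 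What actually closes this is the explicit gradient formula $B_{x_1}=-us$, $B_{x_2}=s/2$ (uniform across all of $\Omega_-$), which depends only on $s$ and $u(s)$, both of which are continuous across the interfaces; your chain-rule computation will produce exactly this, so the gap is filled by carrying out the calculation you outline rather than by the geometric shortcut as stated.
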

\begin{proof}

It is easy to see that $B$ is continuous on $\Omega.$ Elementary differentiation shows that $B$ is locally concave in $\Omega_0$ and $\Omega_+$ and continuously differentiable in the interior of $\Omega_0\cup\Omega_+.$ Hence, it is locally concave in $\Omega_0\cup\Omega_+.$ Let us show that these properties extend to $\Omega_-.$

Observe that we can rewrite formulas~\eqref{vu_1}--\eqref{s_3+} in a uniform way, as follows. If $m\ge1$ and $x\in\Omega_m,$ let 
\cma{
$$
k=\Big[\frac m2\Big],\quad  z=\frac{s}{\alpha^k},\quad\mu=k\tau+1.
$$
}
In this notation, $u=\frac12\,(z-\frac1z)-\mu$ and $v=u+z$ if $m$ is odd, and $v=u+\frac1z$ if $m$ is even. Then the slope of the extremal segment $\ell_s$ is 
\cma{
$$
\frac{v^2+1-u^2}{v-u}=2u+v-u+\frac1{v-u}=2u+z+\frac1z=2(z-\mu).
$$ 
}
Therefore, $z$ is given as a function of $x=(x_1,x_2)$ by the equation
$
x_2=2(z-\mu)(x_1-u)+u^2
$
or, upon rewriting,
\eq[zx]{
x_2=2(z-\mu)x_1-\frac34\,z^2+2\mu z+\frac12-\mu^2+\frac1{4z^2}.
}
(\cma{The requirement $s\in[\alpha^{m/2},\alpha^{(m-1)/2}]$ determines the solution of this equation uniquely}.)
The formula for $B$ becomes
\eq[bzx]{
B(x)=\frac{\alpha^k}2\,(1+z^2)(x_1-u).
}
We compute the derivatives $z_{x_1}$ and $z_{x_2}$ from~\eqref{zx}:
$$
z_{x_1}=\frac{2(\mu-z)}{2x_1-\frac32\,z+2\mu-\frac1{2z^3}},\qquad 
z_{x_2}=\frac{1}{2x_1-\frac32\,z+2\mu-\frac1{2z^3}}.
$$
Observe that we have $x_1\le v.$ If $m$ is odd, then $z\le1\le \mu$ and $v=\frac12\,(3z-\frac1z)-\mu,$ thus $\mu-z\ge0$ and
$$
2x_1-\frac32\,z+2\mu-\frac1{2z^3}\le \frac32\,z-\frac1z-\frac1{2z^3}=\frac1{2z^3}\,(3z^2+1)(z^2-1)\le0.
$$
If $m$ is even, then $z\le\frac1{\sqrt\alpha}$ and $v=\frac12\,(z+\frac1z)-\mu,$ thus $\mu-z\ge\tau+1-\frac1{\sqrt\alpha}=1-\sqrt\alpha\ge0$ and
$$
2x_1-\frac32\,z+2\mu-\frac1{2z^3}\le -\frac12\,z+\frac1z-\frac1{2z^3}=-\frac1{2z^3}\,(z^2-1)^2\le0.
$$
Hence, in all cases we have $z_{x_1}\le0,$ $z_{x_2}\le0.$ Furthermore,~\eqref{bzx} yields
\begin{align*}
\alpha^{-k}B_{x_1}&=\frac{1}2\,(1+z^2)+\Big(z(x_1-u)-\frac12\,(1+z^2)u_z\Big)z_{x_1}\\
&=\frac{1}2\,(1+z^2)+\frac{z}{2}\,\Big(2x_1-\frac32\,z+2\mu-\frac1{2z^3}\Big)z_{x_1}\\
&=\frac{1}2\,(1+z^2)+z(\mu-z)=\frac{1}2\,(1-z^2)+z\mu.
\end{align*}
Equivalently,
\eq[z1]{
\alpha^{-k}B_{x_1}=-uz.
}
Similarly,
\eq[z2]{
\alpha^{-k}B_{x_2}=\Big(z(x_1-u)-\frac12\,(1+z^2)u_z\Big)z_{x_2}=\frac z2.
}
Therefore,
$$
\alpha^{-k}B_{x_1x_1}=(\mu-z)z_{x_1},\quad \alpha^{-k}B_{x_1x_2}=(\mu-z)z_{x_2},\qquad \alpha^{-k}B_{x_2x_2}=\frac12\,z_{x_2},
$$
which gives
\eq[ma1]{
B_{x_1x_1}\le0,\quad B_{x_2x_2}\le0,\quad B_{x_1x_1}B_{x_2x_2}=B_{x_1x_2}^2.
}
This means that $B$ is locally concave in each $\Omega_m.$ Furthermore, since $B_{x_1}=-us,$ $B_{x_2}=\frac s2,$ and $s$ is a continuous function of $x$ on $\Omega_-,$ we conclude that $B\in C^1(\Omega_-).$ Hence, it is locally concave in $\Omega_-.$ 

It remains to consider the boundary between $\Omega_0$ and $\Omega_-,$ i.e., the extremal segment $\ell_1.$ As shown above, in $\Omega_-,$ $B_{x_1}=-us,$ $B_{x_2}=\frac s2.$ On $\ell_1,$ we have $B_{x_1}=1,$ $B_{x_2}=\frac 12.$ On the other hand, in $\Omega_0,$ $B_{x_1}=1$ and $B_{x_2}=\frac1{2\sqrt{x_2}}.$ On $\ell_1,$ we have $x_2=1,$ thus $B_{x_2}=\frac 12.$ Therefore, $\nabla B$ is continuous across $\ell_1$ and so $B$ is locally concave in all of $\Omega.$
\end{proof}

\subsection{Important formulas}
Let us collect in one place several key formulas we will need in the rest of this section.  \cma{First, we already know that in $\Omega_-$
\eq[ders]{
B_{x_1}=-us,\qquad B_{x_2}=\frac s2.
}
}
These are formulas~\eqref{z1} and~\eqref{z2}. The functions $u=u(s)$ and $s=s(x)$  are defined by~\eqref{vu_1},~\eqref{vu_2}, and~\eqref{s_3+}.

We will also need the expressions in terms of $s$ of $B$ and its tangential derivative on the upper boundary of $\Omega_-.$ For $v\le0,$ let $V=(v,v^2+1).$ Recall the notation $b(v)=B(V).$ From~\eqref{d_1},~\eqref{d_2}, and~\eqref{d_3+},
\eq[bb]{
b(v)=
\begin{cases}
\frac{s}2\,\big(1+\cma{\big(\frac{s}{\alpha^k}\big)^2}\big),&\text{if~}V\in\Omega_{2k+1},~k\ge0,\\
\frac{s}2\,\big(1+\cma{\big(\frac{\alpha^k}s\big)^2}\big),&\text{if~}V\in\Omega_{2k},~k\ge1.
\end{cases}
}

Furthermore,~\eqref{ders} gives
\eq[b'1]{
b'(v)=B_{x_1}(V)+2vB_{x_2}(V)=s(v-u).
}
Using~\eqref{vu_1},~\eqref{vu_2}, and~\eqref{s_3+}, this can be written as
\eq[b']{
b'(v)=
\begin{cases}
\frac{s^2}{\alpha^k},&\text{if~}V\in\Omega_{2k+1},~k\ge0,\\
\alpha^k,&\text{if~}V\in\Omega_{2k},~k\ge1.
\end{cases}
}

\subsection{The proof of Condition~\ref{C2} of Lemma~\ref{L2}}
\begin{lemma}
\label{dirder}
For any two points on $\Gamma_1,$ $P =(p, p^2+1)$ and $Q=(q,q^2+1),$ such that $|p-q|\le\tau,$ we have
\eq[dir_der]{
(D_{\scriptscriptstyle\overrightarrow{PQ}}B)(P)\ge (D_{\scriptscriptstyle\overrightarrow{PQ}}B)(Q).
}
\end{lemma}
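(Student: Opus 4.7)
The plan is to convert \eqref{dir_der} into an algebraic inequality along $\Gamma_1$ and verify it by region-by-region case analysis. Writing $\overrightarrow{PQ} = (q-p,\, q^2-p^2)$, we have
\begin{align*}
(D_{\scriptscriptstyle\overrightarrow{PQ}}B)(V) = (q-p)B_{x_1}(V) + (q^2-p^2)B_{x_2}(V).
\end{align*}
Substituting $B_{x_1}(V) = b'(v) - 2vB_{x_2}(V)$ from \eqref{b'1} and assuming WLOG $p \le q$, the desired inequality \eqref{dir_der} becomes
\begin{align*}
(q-p)\bigl[B_{x_2}(P) + B_{x_2}(Q)\bigr] \ge b'(q) - b'(p).
\end{align*}
By \eqref{ders} on $\Omega_-$ and a direct check on $\Omega_+$, we have $B_{x_2}(v, v^2+1) = s(v)/2$ on $\Gamma_1 \cap \Omega$, where we set $s \equiv 1$ on $\Gamma_1 \cap \Omega_+$. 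Thus the target reduces to
\begin{align*}
(\ast) \qquad (q-p)\,[s(p) + s(q)]/2 \ge b'(q) - b'(p).
\end{align*}

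For $P, Q$ both in $\Omega_-$, Remark~\ref{period} lets me apply a shift so that the rightmost of the shifted points $P^*, Q^*$ lies in $\Omega_1 \cup \Omega_2$, reducing to finitely many configurations. In the same-region case $P, Q \in \Omega_m \cap \Gamma_1$ with $\Omega_m \subset \Omega_-$, parametrizing by $z$ with $s = \alpha^{[m/2]}z$, the quantity $\varphi(v) := b'(v) + (p+q-2v)s(v)/2$ takes the clean form
\begin{align*}
\tilde\varphi(z) = \alpha^{[m/2]}\bigl[1/2 + Cz - z^2/2\bigr], \qquad C = (p+q)/2 + \mu,
\end{align*}
so that $\tilde\varphi(z_P) - \tilde\varphi(z_Q) = \alpha^{[m/2]}(z_P - z_Q)\bigl[C - (z_P+z_Q)/2\bigr]$. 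Substituting the explicit formulas for $p, q$ in terms of $z_P, z_Q$, the inequality $(\ast)$ reduces to $z_P z_Q \le 1$ in $\Omega_{2k+1}$ (true since $z \in [\sqrt\alpha, 1]$) and to $z_P z_Q \ge 1$ in $\Omega_{2k+2}$ (true since $z \in [1, 1/\sqrt\alpha]$). The case $P, Q \in \Omega_+$ is immediate since $b' \equiv 1 \equiv s$ there, and the case $P \in \Omega_-$, $Q \in \Omega_+$ follows similarly using $b'(q) = s(q) = 1$ together with the explicit forms of $b'(p), s(p)$.

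The main obstacle is the cross-region case within $\Omega_-$. Since $s(v)$ and $b'(v)$ are continuous across all sub-region boundaries -- a fact to be verified directly from the explicit formulas -- the function $\varphi$ is continuous and piecewise-smooth on $\Gamma_1 \cap \Omega_-$. The constraint $|p-q|\le\tau$, combined with the fact that each pair $\Omega_{2k-1}\cup\Omega_{2k}$ ($k\ge1$) has width exactly $\tau$ on $\Gamma_1$, bounds the relevant sub-cases (after the shift) to a short finite list such as $P \in \Omega_2, Q \in \Omega_1$; $P \in \Omega_3, Q \in \Omega_2$; $P \in \Omega_4, Q \in \Omega_3$; $P \in \Omega_3, Q \in \Omega_1$; $P \in \Omega_4, Q \in \Omega_2$; and $P \in \Omega_4, Q \in \Omega_1$. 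In each such sub-case, both sides of $(\ast)$ can be expressed in terms of $z_P, z_Q, \tau, \alpha$ using \eqref{vu_1}, \eqref{vu_2}, \eqref{s_3+}, and \eqref{b'}, yielding an explicit polynomial inequality provable by elementary manipulation.
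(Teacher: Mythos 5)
Your reduction is correct and follows essentially the paper's route: rewriting the directional-derivative inequality as $(q-p)\bigl[s(p)+s(q)\bigr]/2 \ge b'(q)-b'(p)$ is exactly the paper's condition $G(s_p,s_q)\ge 0$, and the shift via Remark~\ref{period} gives the same finite case list. Your treatment of the same-region case is genuinely nice: the observation that $\varphi(v)=b'(v)+(p+q-2v)s(v)/2$ becomes the quadratic $\tilde\varphi(z)=\alpha^{[m/2]}\bigl[\tfrac12+Cz-\tfrac{z^2}2\bigr]$ after the substitution $s=\alpha^{[m/2]}z$, and that the sign comes down to $z_Pz_Q\lessgtr1$, is a cleaner packaging than the paper's presentation, where these same-region instances are scattered among the cross-region sub-cases.

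There is a genuine gap, however: the cross-region cases (and the $P\in\Omega_-$, $Q\in\Omega_+$ case) are asserted, not proved, and they are where the real content of the lemma lies. They are not dispatched by a single ``explicit polynomial inequality'' manipulation. For example, for $P\in\Omega_2$, $Q\in\Omega_1$ the paper must first show $G(s_p,s_q)$ is concave in $s_q$ on $[\sqrt\alpha,1]$, evaluate at the endpoints, and then for the surviving endpoint show the resulting one-variable function $g(s_p)$ is in turn concave in $s_p$ on $[\alpha,\sqrt\alpha]$ and check both of its endpoints --- two nested concavity arguments, not a sign check of a polynomial. The $P\in\Omega_4$, $Q\in\Omega_2$ case requires an additional monotonicity reduction back to the boundary configuration $Q\in\Omega_3$. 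None of this is carried out in your proposal, so as written it establishes the easy part and defers the hard part.

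Two minor points about the case list: after the periodic shift the rightmost of the two shifted points lands in $\Omega_1\cup\Omega_2$, so the pair $P\in\Omega_4$, $Q\in\Omega_3$ does not actually arise (it shifts down once more to $\Omega_2,\Omega_1$), and $P\in\Omega_4$, $Q\in\Omega_1$ occurs only as a degenerate boundary configuration ($q=p_1$, $p=p_2$). Including them is harmless but they are redundant; the non-degenerate cross-region cases you must actually verify are exactly the four the paper treats.
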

\begin{proof}
It is enough to check~\eqref{dir_der} only when $p\le q,$ since the condition is symmetric. Moreover, since the function $x_1+\sqrt{x_2-x_1^2}$ is concave in the whole domain $\{x_1\ge0, x_2\ge x_1^2\},$ \eqref{dir_der} holds automatically when $0\le p\le q,$ hence, we can assume $p\le0.$ 

We have $\overrightarrow{PQ}=(q-p)
\big[\begin{smallmatrix}
1\\
p+q
\end{smallmatrix}\big],
$
thus inequality~\eqref{dir_der} is equivalent to $B_{x_1}(P)+B_{x_2}(P)(p+q)\ge B_{x_1}(Q)+B_{x_2}(Q)(p+q)$ or, using~\eqref{b'1}, to
\eq[refr]{
b'(p)-b'(q)+(q-p)(B_{x_2}(P)+B_{x_2}(Q))\ge0.
}

When $q\ge0,$ we have $b'(q)=1$ and $B_{x_2}(Q)=\frac12,$ which means that the left-hand side of~\eqref{refr} is an increasing function of $q,$ \cma{since $B_{x_2}(P)\ge0.$} Therefore, the case $q\ge0$ reduces to $q=0.$ 

From now on, assume that $q\le0.$ Then $B_{x_2}(P)=\frac{s_p}2$ and $B_{x_2}(Q)=\frac{s_q}2,$ the left-hand side of~\eqref{refr} is a function of $s_p$ and $s_q:$ 
$$
G(s_p,s_q):=b'(p)-b'(q)+\frac12\,(q-p)(s_p+s_q),
$$
and~\eqref{refr} is equivalent to the inequality $G\ge0.$

\cma{
We have the following possibilities for the location of $P$ and $Q$ on $\Gamma_1$: 
\begin{itemize}
\item$k\ge1,$ $P\in\Omega_{2k},$ $Q\in\Omega_{2k}\cup\Omega_{2k-1};$ 
\item$k\ge2,$ $P\in\Omega_{2k},$ $Q\in\Omega_{2k-2};$
\item$k\ge1,$ $P\in\Omega_{2k+1},$ $Q\in\Omega_{2k+1}\cup\Omega_{2k}\cup\Omega_{2k-1}.$ 
\end{itemize}
By Remark~\ref{period}, it is enough to consider the following three cases: 
\begin{itemize}
\item$P\in\Omega_2,$ $Q\in\Omega_2\cup\Omega_1;$
\item$P\in\Omega_4,$ $Q\in\Omega_2;$
\item$P\in\Omega_3,$ $Q\in\Omega_3\cup\Omega_2\cup\Omega_1.$
\end{itemize}
}

{\bf Case 1: $P\in\Omega_2,$ $Q\in\Omega_2\cup\Omega_1.$} 
When $P\in\Omega_2,$ we have $b'(p)=\alpha.$ If $Q\in\Omega_2,$ then $b'(q)=\alpha$ as well, so $G\ge0.$ If $Q\in\Omega_1,$ then $b'(q)=s_q^2,$ so~
$$
G(s_p,s_q)=\alpha-s_q^2+\frac{q-p}2\,(s_p+s_q),
$$
where $\alpha\le s_p\le \sqrt\alpha,$ $p=\frac12(\frac{s_p}\alpha+\frac{\alpha}{s_p})-\tau-1,$ $\sqrt\alpha\le s_q\le1,$ and $q=\frac12(3{s_q}-\frac{1}{s_q})-1.$ Direct differentiation shows that this function is concave in $s_q,$ thus it suffices to verify that 
$G(s_p,\sqrt\alpha)\ge0$ and $G(s_p,1)\ge0.$ The first inequality is obvious, and the second one is equivalent to
$$
g(s_p):=\alpha-1-\frac{p}2\,(s_p+1)\ge0\cma{.}
$$
The function $g$ is easily seen to be concave in $s_p,$ thus we need only verify that $g(\alpha)\ge0$ and $g(\sqrt\alpha)\ge0.$ For $s_p=\alpha,$ we have $p=-\tau,$ so $g(\alpha)=\frac1{2\sqrt\alpha}(1-\alpha)(1-\sqrt\alpha)^2\ge0.$ For $s_p=\sqrt\alpha,$ we have $p=\frac12(3\sqrt\alpha-\frac1{\sqrt\alpha})-1,$ so $g(\sqrt\alpha)=\frac1{4\sqrt\alpha}(1+\sqrt\alpha)(1-\sqrt\alpha)^2
\ge0.$ 

{\bf Case 2: $P\in\Omega_4,$ $Q\in\Omega_2.$} Here, $\alpha^2\le s_p\le \alpha^{3/2},$ $p=\frac12(\frac{s_p}{\alpha^2}+\frac{\alpha^2}{s_p})-2\tau-1,$ and $b'(p)=\alpha^2;$ $\alpha\le s_q\le \frac{s_p}\alpha,$ $q=\frac12(\frac{s_q}\alpha+\frac{\alpha}{s_q})-\tau-1,$ and $b'(q)=\alpha.$ Thus,
$$
G(s_p,s_q)=\alpha^2-\alpha+\frac{q-p}2\,(s_p+s_q).
$$
This function is increasing in $s_q$ since $q$ is. Thus, it suffices to verify that $G(s_p,\alpha)\ge0.$ Observe that when $s_q=\alpha,$ we have $Q\in\Omega_3.$ Since we have already proved that $G\ge0$ in the algebraically equivalent case $P\in\Omega_2,$ $Q\in\Omega_1,$ we conclude that $G(s_p,\alpha)\ge0.$

{\bf Case 3: $P\in\Omega_3,$ $Q\in\Omega_3\cup\Omega_2\cup\Omega_1.$} 

Here we have $\alpha^{3/2}\le s_p\le \alpha,$ $p=\frac12(\frac{3s_p}\alpha-\frac\alpha{s_p})-\tau-1,$ and $b'(p)=\frac{s_p^2}\alpha.$

If $Q\in\Omega_3,$ then $s_p\le s_q\le \alpha,$ $q=\frac12(\frac{3s_q}\alpha-\frac\alpha{s_q})-\tau-1,$ and $b'(q)=\frac{s_q^2}\alpha.$ Then
\begin{align*}
G(s_p,s_q)&=\frac1\alpha(s_p^2-s_q^2)+\frac14\Big(\frac3\alpha(s_q-s_p)-\alpha\Big(\frac1{s_q}-\frac1{s_p}\Big)\Big)(s_p+s_q)\\
&=\frac1{4\alpha s_ps_q}\,(s_q^2-s_p^2)(\alpha^2-s_ps_q)\ge0.
\end{align*}

If $Q\in\Omega_2,$ then $\alpha\le s_q\le \sqrt\alpha,$ $q=\frac12(\frac{s_q}\alpha+\frac\alpha{s_q})-\tau-1,$ and $b'(q)=\alpha.$ Then
\begin{align*}
G(s_p,s_q)&=\frac{s_p^2}\alpha-\alpha+\frac14\Big(\frac{s_q}\alpha-\frac{3s_p}\alpha+\frac\alpha{s_q}+\frac\alpha{s_p}\Big)(s_p+s_q)\\
&=\frac1{4\alpha }\,(s_q-s_p)^2+\frac\alpha4\Big(\frac{s_q}{s_p}+\frac{s_p}{s_q}-2\Big)\ge0.
\end{align*}

If $Q\in\Omega_1,$ then $\sqrt\alpha\le s_q\le \frac{s_p}\alpha,$ $q=\frac12(3s_q-\frac1{s_q})-1,$ and $b'(q)=s_q^2.$ Then 
\begin{align*}
G(s_p,s_q)=\frac{s_p^2}\alpha-s_q^2+\frac{q-p}2\,(s_p+s_q).
\end{align*}
This function is concave in $s_q,$ so it is enough to verify that $G(s_p,\sqrt\alpha)\ge0$ and $G(s_p,\frac{s_p}\alpha)\ge0.$ When $s_q=\sqrt\alpha,$ $Q\in\cma{\Omega_2},$ so this has been shown above. When $s_q=\frac{s_p}\alpha,$ we have $q=p+\tau,$ so
$$
G\Big(s_p,\frac{s_p}\alpha\Big)=\frac{s_p^2}\alpha-\frac{s_p^2}{\alpha^2}+\frac\tau2\,\Big(1+\frac1\alpha\Big)s_p.
$$
This function is concave in $s_p,$ so it suffices to verify that $G(\alpha^{3/2},\sqrt\alpha)\ge0$ and $G(\alpha,1)\ge0.$ We compute:
$$
G(\alpha^{3/2},\sqrt\alpha)=\frac12\,(1-\alpha)^2,\qquad G(\alpha,1)=\frac1{2\sqrt\alpha}(1-\alpha)(1-\sqrt\alpha)^2
\ge0.
$$
The proof is complete.
\end{proof}

\subsection{The proof of Condition \ref{C3} of Lemma~\ref{L2}}
For all $p,q$ such that $|q-p|\le\tau,$ let
$$
H(p,q)=b(p)-(1-\alpha)B(R)-\alpha b(q),
$$
where $R=(r_1,r_2)=\cma{\big(\frac{p-\alpha q}{1-\alpha},\frac{p^2-\alpha q^2}{1-\alpha}+1\big)}.$ Then Condition~\ref{C3} is equivalent to the inequality $H(p,q)\ge0.$

\begin{lemma}
\label{main_alpha}
For all $p$ \cma{and} $q$ such that $|q-p|\le\tau,$ $H(p,q)\ge0.$
\end{lemma}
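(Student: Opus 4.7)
My plan is to prove the lemma by case analysis, following the template of Lemma~\ref{dirder} and resting on three observations.

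First, $q=p$ always yields $R=P$, so
\[
H(p,p)=b(p)-(1-\alpha)b(p)-\alpha b(p)=0,
\]
and the desired conclusion amounts to saying that $q=p$ minimizes $q\mapsto H(p,q)$ on $[p-\tau,p+\tau]$. Differentiating $H$ in $q$ and using~\eqref{ders} together with~\eqref{b'1}, one checks that $\partial_q H$ vanishes at $q=p$, so $q=p$ is automatically a critical point of the function $q\mapsto H(p,q)$.

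Second, I would use Remark~\ref{period} to reduce to a finite list of configurations. The parabolic shift $T_{-k\tau}$ is affine and commutes with the affine combination defining $R$; in tandem with the scaling $B(T_{-k\tau}\cdot)=\alpha^{-k}B(\cdot)$, this gives the identity $H(p-k\tau,q-k\tau)=\alpha^{-k}H(p,q)$ whenever $P$, $Q$, and $R$ all lie in $\Omega_-$. Taking $k$ maximal, we may assume that at least one of $P$, $Q$ lies in $\Omega_1\cup\Omega_2$, paralleling the cases enumerated in the proof of Lemma~\ref{dirder}. The easy ranges $p,q\ge 0$ (and the mixed-sign ranges) fall out from the concavity of $x_1+\sqrt{x_2-x_1^2}$ on the convex region $\{x_2\ge x_1^2,\, x_1\ge 0\}$ together with the $C^1$-matching of $B$ across $\ell_1$ that was established in Lemma~\ref{lc}.

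In each remaining subcase I would substitute the explicit formulas~\eqref{bp} for $b$ and~\eqref{d_1}--\eqref{d_3+} for $B(R)$, parameterize by $s$ via~\eqref{vu_1}--\eqref{vu_2}, and verify $H\ge 0$ by direct computation, most naturally by checking the sign of $\partial_q^2 H$ in the interior and the sign of $H$ at the two endpoints. The endpoints $q=p\pm\tau$ place $R$ on $\Gamma_0$, where $B(R)=0$ if $R\in\Omega_-$ by Remark~\ref{b=0}, and $B(R)=r_1$ if $R\in\Omega_+$; in either case the resulting inequality collapses to a clean algebraic identity built into the definition of $b$, in particular the multiplicative relation $b(p-\tau)=\alpha b(p)$ which encodes exactly the tightness of $|p-q|\le\tau$.

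The principal obstacle is the combinatorial proliferation of subcases: besides the choice of region for $P$ and $Q$, the point $R$ is a nonlinear function of $(p,q)$ and may cross from one $\Omega_k$ to another as $q$ varies, each crossing demanding a separate piece of the analysis. As in Lemma~\ref{dirder}, I expect every individual subcase to reduce, after a few lines of algebra in the $s$-variables, either to a sum-of-squares inequality or to a transparent monotonicity argument.
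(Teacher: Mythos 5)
Your opening observations are correct and match the paper's setup: $H(p,p)=0$, and using~\eqref{b'1} one verifies $H_q(p,p)=0$, so $q=p$ is a critical point; the periodicity reduction via $T_{-k\tau}$ is also right (modulo a sign: the identity is $H(p-k\tau,q-k\tau)=\alpha^{k}H(p,q)$, not $\alpha^{-k}$); and the endpoints $q=p\pm\tau$ do place $R$ on $\Gamma_0$, where $B(R)=0$ and $H$ collapses to zero via $b(p-\tau)=\alpha\,b(p)$.

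However, the central strategy --- ``checking the sign of $\partial_q^2 H$ in the interior'' --- has a fatal flaw in the half $q>p$. The paper's Lemma~\ref{p<q} computes $H_{qq}$ and shows it is non-negative, but only because there the term $B_{x_2}(R)-B_{x_2}(Q)$ is non-negative, which relies on $R$ lying to the \emph{right} of $Q$ (i.e.\ $r_1>q$), which happens precisely when $q<p$. When $q>p$ we have $r_1<q$, so $R$ lies to the left of $Q$, the sign of $B_{x_2}(R)-B_{x_2}(Q)$ reverses, and $H_{qq}$ is in general negative. Worse, your own boundary computation shows $H(p,p)=0$ and $H(p,p+\tau)=0$; if $H$ were convex in $q$ on $[p,p+\tau]$ it would then be $\le0$ there, i.e.\ the opposite of what you want, unless $H\equiv0$ on that interval. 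The paper's Lemma~\ref{trajectory} shows $H>0$ strictly at interior configurations (e.g.\ $V\in\Omega_2$), so $H$ is actually concave-ish in $q$ on the right half, and the convexity route cannot work. This is exactly why the paper's proof of the $q>p$ case is so much more elaborate: it foliates by the extremal trajectory through $R$, parametrizes by $(\xi,\theta)$, proves three boundary lemmas (\ref{trajectory}, \ref{tangent}, \ref{extension}) involving a locally concave extension $\tilde B$ beyond $\Gamma_1$, and then rules out negative interior critical points of $W(\xi,\theta)$ via a first-order analysis in two variables. Your proposal as written handles $q\le p$ correctly and reproduces the paper's Lemma~\ref{p<q}, but would need an entirely different mechanism for the $q>p$ half.
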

\cma{The proof consists of a series of lemmas, each dealing with an important special case, followed by the remaining general case.}
\begin{lemma}
\label{p<q}
If $p-\tau\le q\le p,$ then $H(p,q)\ge0.$
\end{lemma}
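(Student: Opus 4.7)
The plan is to reduce the inequality $H(p,q) \ge 0$ to a one-dimensional statement on $\Gamma_1$ by exploiting the local concavity of $B$ established in Lemma~\ref{lc}, and then verify that statement via integration by parts combined with a pointwise bound on $b''$.

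The key geometric observation is that $P = (1-\alpha) R + \alpha Q$ places $Q$, $P$, $R$ on a common line, with $R$ lying on the extension of $QP$ past $P$. Parametrizing the line as $\gamma(t) = Q + t(P-Q)$, a direct computation gives
\[
\gamma(t)_2 - \gamma(t)_1^2 = 1 + t(1-t)(p-q)^2,
\]
so the segment $[P,R]$ (corresponding to $t \in [1,\,1/(1-\alpha)]$) lies entirely in $\Omega$ as long as $|p-q| \le \tau$, while the segment $[Q,P]$ protrudes above $\Gamma_1$ (outside $\Omega$). By Lemma~\ref{lc}, $B$ is concave along $[P,R]$, so the tangent-line bound $B(R) \le B(P) + \nabla B(P) \cdot (R-P)$ holds with the one-sided, $\Omega$-side gradient at $P$. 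Substituting this into $H(p,q)$, using $R - P = \tfrac{\alpha}{1-\alpha}(P-Q)$ and the tangential identity $b'(p) = B_{x_1}(P) + 2p\,B_{x_2}(P)$ from~\eqref{b'1}, I expect to arrive at
\[
H(p,q) \ge \alpha\bigl[b(p) - b(q) - (p-q)\,b'(p) + (p-q)^2 B_{x_2}(P)\bigr].
\]

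Next, integration by parts (justified because $b$ is piecewise $C^2$ and $b'$ is continuous across the domain boundaries) gives $b(p) - b(q) - (p-q)b'(p) = -\int_q^p b''(u)(u-q)\,du$, so it suffices to prove
\[
\int_q^p b''(u)(u-q)\,du \le (p-q)^2 B_{x_2}(P).
\]
This will follow from the pointwise estimate $b''(u) \le 2 B_{x_2}(P)$ for every $u \in [q,p]$. Recall from~\eqref{z2} and the explicit formula on $\Omega_+$ that $2 B_{x_2}(P) = s_p$ when $P \in \Omega_-$ and $=1$ when $P \in \Omega_+$. From~\eqref{bp}, $b''$ vanishes on each affine even domain $\Omega_{2k}$, while on each odd $\Omega_{2k+1}$ a short calculation yields $b''(u) = 4 s(u)/(3 + 1/\tilde s(u)^2) \le s(u)$, where $\tilde s = s/\alpha^k \in [\sqrt\alpha, 1]$. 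Since $s$ is continuous and non-decreasing in $u$ along $\Gamma_1 \cap \Omega_-$ (by the foliation formulas~\eqref{vu_1},~\eqref{vu_2},~\eqref{s_3+}) and $u \le p$, one has $s(u) \le s_p = 2 B_{x_2}(P)$, which closes the required estimate.

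The hard part will be rigorously propagating the pointwise bound $b''(u) \le 2 B_{x_2}(P)$ across the entire interval $[q,p]$, since this interval may cross several consecutive domains $\Omega_m$. The crucial tools are the quasi-periodicity $b(p-\tau) = \alpha b(p)$ --- equivalently, the fact that $B$ transforms by $\alpha^k$ under the shift $T_{-k\tau}$ --- which shows that the estimate scales correctly from one domain to the next, and the elementary algebraic inequality $\tilde s \le 1$ giving $b''(u) \le s(u)$ inside each odd domain. With these ingredients in hand, the integrand in question is dominated by $2 B_{x_2}(P)\,(u-q)$ throughout $[q,p]$, and $H(p,q) \ge 0$ follows.
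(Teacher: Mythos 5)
Your proof is correct, and it takes a genuinely different route from the paper. The paper fixes $p$ and differentiates $H(p,\cdot)$ twice in $q$: local concavity of $B$ makes the two second-directional-derivative terms non-positive, and the additional fact $B_{x_2}(R)\ge B_{x_2}(Q)$ gives $H_{qq}\ge0$; since $H_q(p,p)=0$, this forces $H_q\le 0$ for $q\le p$, hence $H(p,q)\ge H(p,p)=0$. You instead apply the tangent-plane bound $B(R)\le B(P)+\nabla B(P)\cdot(R-P)$ (valid because $[P,R]\subset\Omega$, $B$ is $C^1$ on $\Omega$ by Lemma~\ref{lc}, and $B$ is concave on that segment), then use $R-P=\tfrac{\alpha}{1-\alpha}(P-Q)$ and $b'(p)=B_{x_1}(P)+2p\,B_{x_2}(P)$ to reduce everything to a scalar Taylor-remainder inequality $\int_q^p b''(u)(u-q)\,du \le (p-q)^2B_{x_2}(P)$, which you close with the pointwise estimate $b''(u)\le 2B_{x_2}(P)$. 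The two proofs use closely related geometric inputs — the paper's is the monotonicity $B_{x_2}(R)\ge B_{x_2}(Q)$ along the extremal foliation, yours is the monotonicity of $s(\cdot)$ along $\Gamma_1$ together with the algebraic bound $4\tilde s^2\le 3\tilde s^2+1$ for $\tilde s\le1$ — and the effort is comparable. Your version has the modest advantage of working only with the scalar function $b$ and first derivatives of $B$ (never invoking distributional second derivatives of $B$ itself), at the cost of a slightly more explicit computation. I verified all the algebra, including $\gamma(t)_2-\gamma(t)_1^2=1+t(1-t)(p-q)^2$, the reduction to $\alpha[\,b(p)-b(q)-(p-q)b'(p)+(p-q)^2B_{x_2}(P)\,]$, the formula $b''=\tfrac{4\alpha^k\tilde s^3}{3\tilde s^2+1}$ on odd domains, and the monotonicity $\tfrac{ds}{dp}\ge0$ on $\Gamma_1\cap\Omega_-$; all steps check out, and the ``propagation across domains'' that you flag as the hard part is in fact automatic once $b''(u)\le s(u)\le s_p$ holds pointwise (with $b''=0$ on the affine pieces).
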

\begin{proof}
Note that $H$ is everywhere continuously differentiable and
\begin{align*}
H_q&=-(1-\alpha)\Big(B_{\cma{x_1}}(R)\,\frac{\partial r_1}{\partial q}+B_{x_2}(R)\frac{\partial r_2}{\partial q}\Big)
-\alpha(B_{x_1}(Q)+2q\,B_{x_2}(Q))\\
&=\alpha\Big(B_{x_1}(R)+2qB_{x_2}(R)-B_{x_1}(Q)-2qB_{x_2}(Q)\Big).
\end{align*}
Furthermore, $H$ \cma{is everywhere second-differentiable in the distributional sense}, and its second derivative with respect to $q$ is given by 
\begin{align*}
H_{qq}=&-\alpha\bigg(\frac{\alpha}{1-\alpha}\,\Big[B_{x_1x_1}(R)+4qB_{x_1x_2}(R)+4q^2B_{x_2x_2}(R)\Big]\\
&+\Big[B_{x_1x_1}(Q)+4qB_{x_1x_2}(Q)+4q^2B_{x_2x_2}(Q)\Big]\bigg)+\cma{2}\alpha(B_{x_2}(R)-B_{x_2}(Q)).
\end{align*}
Since $B$ is locally concave, each of the two terms in square brackets is non-positive. Furthermore, we also have $B_{x_2}(R)\ge B_{x_2}(Q).$ Let us explain: if $R\in\Omega_-,$ then $Q\in\Omega_-,$ and $(v_r,v_r^2+1)$ is to the right of $R,$ thus to the right of $Q.$ Therefore, $B_{x_2}(R)=\frac{s_r}2\ge \frac{s_q}2=B_{x_2}(Q).$ If $R\in\Omega_0,$ then $B_{x_2}(R)=\frac1{2\sqrt{r_2}};$ if $R\in\Omega_+,$ then $B_{x_2}(R)=\frac1{2\sqrt{r_2-r_1^2}}.$ In either case, $B_{x_2}(R)\ge\frac12.$ On the other hand, if $Q\in\Omega_-,$ then $B_{x_2}(Q)=\frac{s_q}2\le \frac12,$ and if $Q\in\Omega_+,$ then $B_{x_2}(Q)=\frac12.$ 

Therefore, $H_{qq}\ge0$ \cma{and, hence}, $H_q$ is increasing in $q$ for $q\le p.$ Since $H_q(p,p)=0,$ we conclude that $H_q\le0.$ Therefore, the minimum of $H$ \cma{for $q\in[p-\tau\le p]$ is attained when $q=p.$} Since $H(p,p)=0,$ that minimum is 0.
\end{proof}
Thus, we may assume that $p\le q.$ Furthermore, observe that if $P\in\Omega_+,$ then the line segment $[R,Q]$ lies in the domain $\Omega_0\cup\{x_1\ge0, x_2\ge x_1^2\}.$ Since the function $x_1+\sqrt{x_2-x_1^2}$ is concave in \cma{the region $\{x_2\ge x_1^2\},$} $B$ allows a locally concave extension to 
$$
\Omega_0\cup\{x_1\ge0, x_2\ge x_1^2\},
$$ 
which implies that $H(p,q)\ge0.$ Therefore, we may assume that $\cma{P}\in\Omega_-.$

\begin{lemma}
\label{R0}
If $R\in\Omega_0,$ then $H(p,q)\ge0.$
\end{lemma}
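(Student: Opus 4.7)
The plan is to first use the hypothesis $R\in\Omega_0$ (so $r_1\le 0$ and $r_2\le 1$) together with the standing reductions $P\in\Omega_-,$ $p\le q,$ $|p-q|\le\tau$ to sharply delimit the admissible $(p,q)$-region. The inequalities $p\le\alpha q$ and $p^2\le\alpha q^2,$ combined with $p\le 0$ and $p\le q,$ force $q\ge -p/\sqrt\alpha\ge 0;$ together with $q\le p+\tau,$ this requires $p\ge\sqrt\alpha-1.$ A direct computation using $p_1=\tfrac{3\sqrt\alpha}{2}-\tfrac{1}{2\sqrt\alpha}-1$ shows $p_1\le\sqrt\alpha-1,$ so for every admissible $(p,q)$ we have $P\in\Omega_1$ (hence $b(p)=f(p+1)$) and $Q\in\Omega_+$ (hence $b(q)=q+1$).

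On this admissible set, I would substitute $B(R)=r_1+\sqrt{r_2}$ and differentiate $H$ in $q$ with $p$ held fixed. A short calculation using $\partial r_1/\partial q=-\alpha/(1-\alpha)$ and $\partial r_2/\partial q=-2\alpha q/(1-\alpha)$ yields $H_q=\alpha\bigl(1-b'(q)+q/\sqrt{r_2}\bigr),$ and since $Q\in\Omega_+$ forces $b'(q)=1,$ this collapses to $H_q=\alpha q/\sqrt{r_2}\ge 0.$ Hence $H(p,\cdot)$ is nondecreasing on the admissible interval, and its minimum is attained at the left endpoint $q_*:=-p/\sqrt\alpha,$ where $p^2=\alpha q^2$ and therefore $r_2=1.$ After a brief algebraic simplification (the terms linear in $\sqrt\alpha$ cancel), the lemma reduces to the one-variable inequality
\[
H(p,q_*)=f(p+1)-(p+1)\ge 0\qquad\text{for all }p\in[\sqrt\alpha-1,0].
\]

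Setting $y=p+1\in[\sqrt\alpha,1],$ this is equivalent to $g(y):=y^3-9y+(y^2+3)^{3/2}\ge 0.$ I would finish by observing that $g''(y)=6y+3\sqrt{y^2+3}+3y^2/\sqrt{y^2+3}>0$ for $y>0,$ so $g$ is convex on the positive axis; combined with $g(1)=0$ and $g'(1)=0,$ this shows $y=1$ is the unique global minimum of $g$ on $(0,\infty),$ whence $g\ge 0$ throughout $[\sqrt\alpha,1].$ The main obstacle I anticipate is not the convexity check itself (which is elementary) but the case-chasing needed to see that $R\in\Omega_0,$ together with the prior reductions, forces the admissible window to lie entirely within the $P\in\Omega_1,$ $Q\in\Omega_+$ slice; once that identification is secured, the problem collapses to the pleasant one-variable convexity argument above.
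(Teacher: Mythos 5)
Your proposal is correct, and it ultimately lands on the same one-variable inequality on $\Gamma_1$ that the paper does, but it takes a noticeably longer route. You spend the bulk of the argument carefully delimiting the admissible $(p,q)$-region and then showing that $H_q=\alpha q/\sqrt{r_2}\ge 0$, so that the minimum of $H(p,\cdot)$ is attained at $q_*=-p/\sqrt\alpha$ where $r_2=1$. The paper instead disposes of the $q$-dependence in one line: since $R\in\Omega_0$ gives $r_2\le 1$, it bounds $B(R)=r_1+\sqrt{r_2}\le r_1+1$, after which the $\alpha q$ terms cancel algebraically and $H(p,q)\ge b(p)-p-1$ holds for \emph{every} admissible $q$ — no monotonicity analysis or endpoint identification needed. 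Both paths arrive at $b(p)\ge p+1$ for $p\in[p_1,0]$, i.e.\ $\tfrac12 s_p(s_p^2+1)\ge p+1$. Here too your verification (convexity of $g(y)=y^3-9y+(y^2+3)^{3/2}$ with $g(1)=g'(1)=0$) is more machinery than the paper uses; since $p+1=\tfrac12\big(3s_p-\tfrac{1}{s_p}\big)$, the paper simply factors the difference as $\tfrac1{2s_p}(s_p^2-1)^2\ge0$. Your careful derivation that $R\in\Omega_0$ forces $p\in[\sqrt\alpha-1,0]\subset[p_1,0]$ and $q\ge 0$ is correct and makes explicit why $P\in\Omega_1$ and $Q\in\Omega_+$ — the paper asserts this without detail — but that identification together with the $\sqrt{r_2}\le1$ shortcut is all that is actually needed.
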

\begin{proof}
For this case, we must have $P\in\Omega_1$ and $Q\in\Omega_+.$ Therefore,
$$
b(p)=\frac12\,s_p(s_p^2+1),\quad B(R)=r_1+\sqrt{r_2},\quad b(q)=q+1.
$$
Since $r_2\le 1,$ $B(R)\le r_1+1=\cma{\frac{p-\alpha q}{1-\alpha}}+1.$ Then,
\begin{align*}
H(p,q)&=\frac12\,s_p(s_p^2+1)-(1-\alpha)(r_1+\sqrt{r_2})-\alpha(q+1)\\
&\ge \frac12\,s_p(s_p^2+1)-(1-\alpha)\Big(\frac{p-\alpha q}{1-\alpha}+1\Big)-\alpha(q+1)\\
&= \frac12\,s_p(s_p^2+1)-p-1=\frac12\,s_p(s_p^2+1)-\frac12\Big(3s_p-\frac1{s_p}\Big)=\frac1{2s_p}\,(s_p^2-1)^2\ge0. \qedhere
\end{align*}
\end{proof}

From now on, assume that $R\in\Omega_-.$ Let $u,v,v^+$ be the horizontal coordinates of three points on the extremal trajectory passing through $R:$ the point of intersection with $\Gamma_0,$ the left point of intersection with $\Gamma_1,$ and the right point of intersection with $\Gamma_1,$ respectively (the last two points may coincide).  
\cma{Since $B$ is linear along the trajectory, we have}
$$
B(R)=\frac{r_1-u}{v-u}\,b(v).
$$
Write $\xi=v-u,$ $\theta=\frac{r_1-u}{v-u},$ and $\delta=q-p.$ Then on one hand we have
$$
r_2-r_1^2=(1-\theta)u^2+\theta(v^2+1)-((1-\theta)u+ \theta v)^2=\theta+\theta(1-\theta)\xi^2,
$$
while on the other hand,
$$
r_2-r_1^2=\frac{p^2+1-\alpha(q^2+1)}{1-\alpha}-\Big(\frac{p-\alpha q}{1-\alpha}\Big)^2=1-\frac{\delta^2}{\tau^2}.
$$
Thus,
\eq[delta]{
\delta=\tau\sqrt{(1-\theta)(1-\xi^2\theta)}.
}
In addition, since $r_1=\theta\xi+u=v-(1-\theta)\xi,$
\eq[lmv]{
p=v-(1-\theta)\xi+\frac{\alpha\delta}{1-\alpha},~\qquad q=v-(1-\theta)\xi+\frac\delta{1-\alpha}.
}
\cma{We will also need the expression for $v^+$ in terms of $v$ and $\xi.$ Since
$$
\frac{(v^+)^2-v^2}{v^+-v}=\frac{v^2+1-u^2}{v-u}=v+u+\frac1{v-u},
$$
we have
\eq[v+]{
 v^+=v+\frac1\xi-\xi.
}
}
It is easy to see that there are only two possibilities for the order of the numbers $p,q,v,v^+:$ either $v\le p\le q\le v^+$ or $p\le v\le v^+\le q.$ There are two key special cases that we address first: $p=v\le v^+=q$ and $p\le v=v^+\le q.$

\begin{lemma}
\label{trajectory}
If $p=v,$ and $q=v^+,$ then $H(p,q)\ge0.$
\end{lemma}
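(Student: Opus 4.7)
My plan is to exploit the fact that when $p=v$ and $q=v^+$, all three points $P$, $Q$, $R$ lie on the line containing the extremal segment $\ell_s$ through $V$. Using the identities $v-u = \xi$ and $v^+ - u = 1/\xi$, I would compute
\[
r_1 - u \;=\; \frac{(v-u) - \alpha(v^+-u)}{1-\alpha} \;=\; \frac{\xi^2 - \alpha}{\xi(1-\alpha)},
\]
so that $\theta := (r_1-u)/(v-u) = (\xi^2 - \alpha)/[\xi^2(1-\alpha)]$ and $(1-\alpha)\theta = 1-\alpha/\xi^2$. For every $\xi\in[\sqrt\alpha,1]$ (which covers all relevant $\Omega_m$), the inequalities $\xi^2\ge\alpha$ and $\xi\le 1$ give $\theta\in[0,1]$, placing $R$ on the segment $\ell_s$ itself. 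Since $B$ is linear along $\ell_s$ with $B=0$ at $(u,u^2)\in\Gamma_0\cap\Omega_*$ by Remark~\ref{b=0} and $B=b(v)$ at $V$, one gets $B(R) = \theta\,b(v)$. Substituting into the definition of $H$ collapses everything:
\[
H(v,v^+) \;=\; b(v) - (1-\alpha)\theta\,b(v) - \alpha\,b(v^+) \;=\; \alpha\!\left(\frac{b(v)}{\xi^2} - b(v^+)\right),
\]
and the claim reduces to showing $b(v) \ge \xi^2\, b(v^+)$. Geometrically this asserts that the linear extrapolation of $B|_{\ell_s}$ to the second intersection $V^+$ of the extremal line with $\Gamma_1$ dominates $b(v^+)$.

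Next, I would invoke Remark~\ref{period}: for $V\in\Omega_{2k+1}\cup\Omega_{2k+2}$ with $k\ge 1$, the parabolic shift $T_{-k\tau}$ preserves $\xi$ and rescales both $b(v)$ and $b(v^+)$ by the common factor $\alpha^{-k}$ (provided $V^+$ likewise translates into $\Omega_*$), so one can reduce to $V\in\Omega_1\cup\Omega_2$. The case $V\in\Omega_1$ is immediate: here $\xi = s\in[\sqrt\alpha,1]$ and $v^+ = \tfrac12(s+1/s)-1\ge 0$, so $V^+\in\Omega_+$ with $b(v^+)=v^++1=(1+s^2)/(2s)$. A direct computation yields $\xi^2 b(v^+) = s(1+s^2)/2 = b(v)$, i.e., equality.

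The main obstacle is the case $V\in\Omega_2$, where $\xi=\alpha/s$ with $s\in[\alpha,\sqrt\alpha]$ and $V^+$ can lie in $\Omega_+$, $\Omega_1$, or $\Omega_2$ depending on $s$. For each sub-case I would substitute the appropriate piece of the piecewise definition of $b(v^+)$ and verify the resulting polynomial inequality. For instance, when $V^+\in\Omega_+$ so that $b(v^+)=v^++1$, setting $y=s/\alpha\in[1,1/\sqrt\alpha]$ the inequality reduces to
\[
p(y)\;:=\;\alpha y^4 - (3-\alpha)y^2 + 2\tau y + 1 \;\ge\; 0.
\]
A direct check shows both $p(1/\sqrt\alpha)=0$ and $p'(1/\sqrt\alpha)=0$, so $y=1/\sqrt\alpha$ is a double root, yielding
\[
p(y) \;=\; (y-1/\sqrt\alpha)^2\cdot \alpha\bigl(y^2+2y/\sqrt\alpha+1\bigr),
\]
which is manifestly non-negative for $y>0$. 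The sub-cases $V^+\in\Omega_1$ and $V^+\in\Omega_2$ require relating the trajectory parameter $s$ for $V$ to the trajectory parameter $s'$ for $V^+$ via the common value $v^+$; each substitution produces a polynomial inequality of the same flavor, with a double root at the boundary of the sub-case domains (where equality holds, reflecting continuity of $B$ across the trajectories) and an explicit nonnegative cofactor. The algebraic verification in these two sub-cases is the lengthy but mechanical part of the proof.
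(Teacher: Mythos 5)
Your reduction is correct and identical to the paper's: you compute $(1-\alpha)\theta=1-\alpha/\xi^2$, collapse $H(v,v^+)=\alpha\left(\tfrac{b(v)}{\xi^2}-b(v^+)\right)$, invoke Remark~\ref{period} to reduce to $V\in\Omega_1\cup\Omega_2$, and handle the case $V\in\Omega_1$ (equality) and the sub-case $V\in\Omega_2$, $V^+\in\Omega_+$ (your factorization of $p(y)$ matches the paper's $\tfrac1{2\xi^3}(\xi-\sqrt\alpha)^2(\xi^2+\tfrac2{\sqrt\alpha}\xi+1)$ under $y=1/\xi$) exactly as the paper does. The sub-case $V\in\Omega_2$, $V^+\in\Omega_2$ is also genuinely polynomial, since $b$ is affine on $\Omega_2$, so $\tfrac{b(v)}{\xi^2}-b(v^+)=\tfrac{\alpha}{2\xi^3}(1-\xi^2)^2$ falls out directly.

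However, there is a real gap in the sub-case $V\in\Omega_2$, $V^+\in\Omega_1$, which you dismiss as ``lengthy but mechanical'' polynomial verification. It is not. On $\Omega_1$ the boundary value $b(v^+)$ is given by $b(v^+)=\tfrac12(z^3+z)$ where $z$ is the trajectory parameter of $V^+$, linked to $\xi$ only implicitly through $3z-\tfrac1z=\tfrac3\xi-\xi-2\tau$. Eliminating $z$ requires solving a quadratic, so $b(v^+)$ is not a polynomial in $\xi$ (indeed $f$ in \eqref{gy} carries the $\sqrt{y^2+3}$); conversely, eliminating $\xi$ in favor of $z$ leaves $\tfrac{b(v)}{\xi^2}=\tfrac\alpha2\left(\tfrac1{\xi^3}+\tfrac1\xi\right)$ non-polynomial in $z$. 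No single-variable polynomial inequality with ``a double root at the sub-case boundary'' appears. The paper has to resort to a genuinely different device here: it fixes $z$, treats $\alpha$ as the free variable with $\xi=\xi(\alpha)$ defined implicitly by $3z-\tfrac1z=\tfrac3\xi-\xi-2\tau(\alpha)$, shows $S(\alpha):=\alpha\left(\tfrac1{\xi^3}+\tfrac1\xi\right)-(z^3+z)$ has $S'(\alpha)\le0$ on the admissible range, and then checks the extreme value $S(z^2)=\tfrac{z^2}{\xi^3}(1-\xi^2)^2\ge0$. You would need to supply an argument of this kind (or some other non-polynomial workaround) to close this sub-case; without it, the proof is incomplete.
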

\begin{proof}
Let $V=(v,v^2+1),$ $V^+=(v^+,(v^+)^2+1).$ \cma{We have} $r_1=\frac{v-\alpha v^+}{1-\alpha}$ \cma{and}
$$
\frac{r_1-u}{v-u}=\frac{\frac{v-\alpha (v+1/\xi-\xi)}{1-\alpha}-(v-\xi)}\xi=\frac1{1-\alpha}\,\Big(1-\cma{\frac\alpha{\xi^2}}\Big).
$$
\cma{Therefore,}
$$
H(p,q)=H(v,v^+)=b(v)-(1-\alpha)\,\frac{r_1-u}{v-u}\, b(v)-\alpha b(v^+)=\alpha\Big(\frac{b(v)}{\xi^2}-b(v^+)\Big),
$$
and to prove the lemma is the same as to show that
\eq[vv^+]{
\frac{b(v)}{\xi^2}-b(v^+)\ge0.
}
From now on, the specifics of the consideration will depend on the location of the point $V.$ In accordance with Remark~\ref{period}, it is enough to consider two cases: $V\in\Omega_2$ and $V\in\Omega_1.$ 

The case $V\in\Omega_1$ is the easier one and we will start with it. In this case, $v=\frac12(3\xi-\frac1\xi)-1$ and $b(v)=\frac12(\xi^3+\xi).$ Furthermore, $v^+=v+\frac1\xi-\xi=\frac12(\xi+\frac1\xi)-1.$ Therefore, $v^+\ge0$ and $b(v^+)=v^++1=\frac12(\xi+\frac1\xi).$ Then
$$
\frac{b(v)}{\xi^2}-b(v^+)=\frac{\frac12(\xi^3+\xi)}{\xi^2}-\frac12\Big(\xi+\frac1\xi\Big)=0.
$$

If $V\in\Omega_2,$ then $v=\frac12(\xi+\frac1\xi)-\tau-1$ and $b(v)=\frac\alpha2(\xi+\frac1\xi).$ Furthermore, $v^+=v+\frac1\xi-\xi=\frac12(\frac3\xi-\xi)-\tau-1.$ The value of $b(v^+)$ is determined by the location of the point $V^+,$ and there are three possibilities: $V^+\in\Omega_2,$ $V^+\in\Omega_1,$ and $V^+\in\Omega_+.$ 

If $V^+\in\Omega_2,$ then $b(v^+)=\alpha(v^++\tau+1)=\frac\alpha2(\frac3\xi-\xi).$ Thus,
$$
\frac{b(v)}{\xi^2}-b(v^+)=\frac{\frac\alpha2(\xi+\frac1\xi)}{\xi^2}-\frac\alpha2\Big(\frac3\xi-\xi\Big)=
\frac\alpha{2\xi^3}\,\big(1-\xi^2\big)^2\ge0.
$$

If $V^+\in\Omega_+,$ then $b(v^+)=v^++1=\frac12(\frac3\xi-\xi)-\tau.$ Thus,
\begin{align*}
\frac{b(v)}{\xi^2}-b(v^+)&=\frac{\frac\alpha2(\xi+\frac1\xi)}{\xi^2}-\frac12\Big(\frac3\xi-\xi\Big)+\tau
=\frac1{2\xi^3}\,\big(\xi^4+2\tau\xi^3+(\alpha-3)\xi^2+\alpha\big)\\
&=\frac1{2\xi^3}\,\big(\xi-\sqrt\alpha\big)^2\Big(\xi^2+\frac2{\sqrt\alpha}\,\xi+1\Big)\ge0.
\end{align*}

Finally, if $V^+\in\Omega_1,$ then $v^+=\frac12(3z-\frac1z)-1$ and $b(v^+)=\frac12(z^3+z),$ for some $z\in[\sqrt\alpha,1].$ \cma{Since $v^+=\frac12(\frac3\xi-\xi)-\tau-1,$ we have}
\eq[zxi]{
3z-\frac1z=\frac3\xi-\xi-2\tau.
}
\cma{Hence,}
$$
\frac{b(v)}{\xi^2}-b(v^+)=\frac12\,\Big[\alpha\Big(\frac1{\xi^3}+\frac1\xi\Big)-(z^3+z)\Big].
$$
Let us turn things around: fix $z\in(0,1]$ and consider the last expression in brackets as a function of $\alpha,$ i.e., let
$S(\alpha)=\alpha\big(\frac1{\xi^3}+\frac1\xi\big)-(z^3+z),$ where $0<\alpha\le z^2$ and $\xi=\xi(\alpha)$ is the positive solution of~\eqref{zxi}.
We have $\frac{d\xi}{d \alpha}=\frac{\alpha^{-3/2}+\alpha^{-1/2}}{3/\xi^2+1},$ thus
$$
S'(\alpha)=\alpha\Big(-\frac3{\xi^4}-\frac1{\xi^2}\Big)\frac{d\xi}{d \alpha}+\frac1{\xi^3}+\frac1\xi=-\frac\alpha{\xi^2}\,(\alpha^{-3/2}+\alpha^{-1/2})+\frac1{\xi^3}+\frac1\xi=\frac1{\xi^3}\,(\xi-\sqrt\alpha)\Big(\xi-\frac1{\sqrt\alpha}\Big)\le0,
$$
\cma{since in $\Omega_1$ we have $v-u=s\in[\sqrt\alpha,1]$ and, thus, $\sqrt\alpha\le\xi\le1\le\frac1{\sqrt\alpha}.$}
Therefore, to show that $S(\alpha)\ge0,$ it suffices to show that $S(z^2)\ge0.$ From~\eqref{zxi}, when $\alpha=z^2,$ we have $z+\frac1z=\frac3{\xi}-\xi,$ hence, writing $\xi$ for $\xi(z^2),$
$$
S(z^2)=z^2\Big(\frac1{\xi^3}+\frac1\xi\Big)-(z^3+z)=z^2\Big[\frac1{\xi^3}+\frac1\xi-\Big(\frac3{\xi}-\xi\Big)\Big]=\frac{z^2}{\xi^3}\,(1-\xi^2)^2\ge0.
$$
The proof is complete.
\end{proof}

\begin{lemma}
\label{tangent}
If $\cma{v^+=v},$ and $p\le v\le q,$ then $H(p,q)\ge0.$
\end{lemma}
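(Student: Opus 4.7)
The tangent condition $v^+=v$ is equivalent to $\xi:=v-u=1,$ which together with the foliation structure forces $v=-k\tau$ for some non-negative integer $k.$ When $k=0,$ $V=(0,1)$ and the trajectory degenerates to a horizontal segment on the upper boundary of $\Omega_0,$ so $R\in\overline{\Omega_0}$ and the claim is already covered by Lemma~\ref{R0}. From now on I assume $k\ge 1.$ The requirement that $R$ lie on the tangent line $y=2vx+1-v^2$ to $\Gamma_1$ at $V,$ combined with $p\le v\le q,$ forces $(v-p)^2=\alpha(q-v)^2,$ i.e., $v-p=\sqrt\alpha\,(q-v).$ I therefore parametrize $p=v-\sqrt\alpha\,t,$ $q=v+t,$ with $t\in\bigl[0,(1-\sqrt\alpha)/\sqrt\alpha\bigr];$ the upper bound comes from $|p-q|\le\tau.$

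From~\eqref{bp} and~\eqref{b'} one has $b(v)=b'(v)=\alpha^k.$ Let $L(x):=\alpha^k(x+k\tau+1)$ be the tangent line to $b$ at $v.$ Since $B$ is affine along the tangent trajectory, $B(R)=L(r_1),$ and a short computation using $r_1=(p-\alpha q)/(1-\alpha)$ yields
\[
H(p,q)=g(p)-\alpha\,g(q),\qquad g(x):=b(x)-L(x),
\]
with $g\ge 0$ everywhere by convexity of $b.$ Split the $t$-range into $[0,p_0]$ and $[p_0,(1-\sqrt\alpha)/\sqrt\alpha].$ On the first sub-interval, $q=v+t\in[-k\tau,p_k]$ lies on the linear segment of $b$ where $b\equiv L,$ so $g(q)=0$ and $H=g(p)\ge 0.$ On the second sub-interval, $q$ moves into the $\Omega_{2k-1}$ region, and self-similarity of $b$ gives $b(p)=\alpha^k f(1-\sqrt\alpha\,t)$ and $b(q)=\alpha^{k-1}f(t-\tau+1),$ leading to
\[
H(t)=\alpha^k\bigl[f(1-\sqrt\alpha\,t)-f(t-\tau+1)+(1+\sqrt\alpha)(\sqrt\alpha\,t-1+\sqrt\alpha)\bigr].
\]
At the right endpoint both arguments of $f$ collapse to $\sqrt\alpha$ and the affine term vanishes, giving $H=0;$ and $H(p_0)\ge 0$ by continuity with the first sub-interval.

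It remains to show that $H$ is concave on $[p_0,(1-\sqrt\alpha)/\sqrt\alpha],$ for then $H\ge 0$ on this sub-interval follows from concavity plus non-negativity at the two endpoints. Differentiating twice gives $H''(t)=\alpha^k\bigl[\alpha f''(1-\sqrt\alpha\,t)-f''(t-\tau+1)\bigr],$ so concavity reduces to the pointwise inequality $\alpha f''(1-\sqrt\alpha\,t)\le f''(t-\tau+1),$ with the two arguments of $f''$ tied by the affine relation $(1-\sqrt\alpha\,t)+\sqrt\alpha(t-\tau+1)=\sqrt\alpha(1+\sqrt\alpha).$ This is the main technical obstacle; I would verify it by direct manipulation of the explicit formula $f''(y)=\tfrac19\bigl[4y+(4y^2+6)/\sqrt{y^2+3}\bigr]$ obtained by differentiating~\eqref{gy} twice, using the constraint above to eliminate one variable and reduce to a one-parameter inequality.
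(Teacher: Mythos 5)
Your setup is clean and the reduction is a nice idea: writing $H=g(p)-\alpha g(q)$ with $g=b-L$ (where $L$ is the tangent line to $b$ at $v=-k\tau$) and dispatching the range $t\in[0,p_0]$ via $g(q)=0$ is correct and slicker than anything in the paper's proof, which works with $D(\delta)$ on all of $[0,\tau]$ at once. Your self-similar reduction of the second sub-interval to the single inequality
\[
\alpha f''(1-\sqrt\alpha\,t)\le f''(t-\tau+1),\qquad t\in\Bigl[p_0,\tfrac{1-\sqrt\alpha}{\sqrt\alpha}\Bigr],
\]
is also algebraically correct.

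The gap is that this inequality is \emph{false} for small $\alpha$, so $H$ is not in fact concave on the second sub-interval and the ``concavity plus non-negative endpoints'' step collapses. To see this, note that on the $\Omega_1$ chart one has $f'(y)=s^2$ and $y=\tfrac12(3s-\tfrac1s)$, hence $f''(y)=\tfrac{4s^3}{3s^2+1}$. At the left endpoint $t=p_0$ one computes $y_2=p_0-\tau+1$, whose $s$-value is exactly $s_2=\sqrt\alpha$, while $y_1=1-\sqrt\alpha\,p_0=\tfrac{2-(1-\sqrt\alpha)^2}{2}$. Thus the required inequality at $t=p_0$ reads
\[
\frac{s_1^3}{3s_1^2+1}\le\frac{\sqrt\alpha}{3\alpha+1}.
\]
As $\alpha\to0$ we have $y_1\to\tfrac12$, so $s_1\to\tfrac{1+\sqrt{13}}{6}\approx0.768$ and the left side tends to $\approx0.163$, while the right side tends to $0$. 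Concretely, for $\alpha=0.01$ one gets $s_1\approx0.809$, left side $\approx0.179$, right side $\approx0.097$, so $H''(p_0)>0$: the function is strictly convex there. (Numerically $H$ on the second sub-interval then rises convexly, passes through an inflection, peaks, and falls to $0$; it is positive throughout, but not concave.) The paper avoids this trap by proving something weaker that still suffices: $D(0)=D(\tau)=0$, $D'(\tau)<0$, and the critical-point equation $D'=0$ has at most one root in $(0,\tau)$ -- the latter being established via convexity of $b'$ on each side of $v$, not concavity of $D$ itself. If you want to salvage your two-interval decomposition, you would have to replace concavity by a unique-critical-point argument of the same flavor, which essentially reproduces the paper's proof.

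A minor point: for $k=0$ the trajectory $\ell_1$ from $(-1,1)$ to $(0,1)$ does lie in $\overline{\Omega_0}$, but Lemma~\ref{R0} is stated for $R\in\Omega_0$ with $P\in\Omega_1$ and $Q\in\Omega_+$, which is the correct configuration here, so that reduction is fine (though the paper simply starts at $k\ge1$ and relies on the $\Omega_0$ case being handled earlier).
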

\begin{proof}
The condition $\cma{v^+=v}$ means that the extremal trajectory is tangent to the curve $\Gamma_1.$ Such tangent trajectories all have $v=-k\tau$ for some \cma{integer} $k\ge1$ and serve as the boundaries between $\Omega_{2k}$ and $\Omega_{2k+1}.$ For our purposes, it suffices to consider $v=-\tau,$ which corresponds to the boundary between $\Omega_2$ and 
$\Omega_3.$

We have $\xi=1,$ thus from~\eqref{delta}, $\theta=\frac{r_1-u}{v-u}=1-\frac{\delta}\tau.$ Since $b(v)=\alpha,$ the inequality $H(p,q)\ge0$ can be rewritten as
$$
D(\delta):=b(p)-(1-\alpha-\sqrt\alpha\delta)\alpha-\alpha b(q)\ge0,
$$
where, from~\eqref{lmv},
$$
p=v-\frac{\sqrt\alpha \delta}{1+\sqrt\alpha},\quad  q=v+\frac{\delta}{1+\sqrt\alpha}.
$$
The domain of the function $D$ is $[0,\tau].$ Clearly, $D(0)=0$ and $D(\tau)=0.$

We have
$$
D'(\delta)=-\frac{\sqrt\alpha}{1+\sqrt\alpha}\,b'(p)+\cma{\alpha\sqrt\alpha}-\alpha b'(q)\,\frac1{1+\sqrt\alpha}.
$$
Setting $D'(\delta)$ equal to 0, and using the fact that $b'(v)=\alpha,$ we get
\eq[derz]{
\frac1{1+\sqrt\alpha}\,b'(p)+\frac{\sqrt\alpha}{1+\sqrt\alpha}\,b'(q)=b'(v).
}
To prove the lemma, we will first show that $D'(\tau)<0$ and then that equation~\eqref{derz} has no more than one root $\delta^*$ in the interval $(0,\tau).$ Since $D(0)=0,$ this will imply that there is precisely one root $\delta^*,$ which is a point of local maximum. Hence, the minimum of $D$ on $[0,\tau]$ is attained at the endpoints, thus being 0.

To show that $D'(\tau)<0,$ we note that this is equivalent to the inequality
$$
\frac1{1+\sqrt\alpha}\,b'(p_*)+\frac{\sqrt\alpha}{1+\sqrt\alpha}\,b'(p_*+\tau)>\alpha,
$$
where $p_*=v-\frac{\sqrt\alpha \tau}{1+\sqrt\alpha}=\cma{-\tau}-1+\sqrt\alpha.$ Since $\cma{b'(p_*+\tau)=\frac{b'(p_*)}\alpha},$ this can be written simply as
\eq[p*]{
b'(p_*)>\alpha^{3/2}.
}
Since $(p_*,p_*^2+1)\in\cma{\Omega_3},$ we have $p_*=\frac12(3x_*-\frac1{x_*})-\tau-1$ 
\cma{for some $x_*\in[\sqrt\alpha,1].$ Comparing the two expressions for $p_*$ we obtain the equation $3x_*-\frac1{x_*}=2\sqrt\alpha;$ in the interval $[\sqrt\alpha,1]$ it has the unique root 
$x_*=\frac13(\sqrt\alpha+\sqrt{\alpha+3}).$ Since $b'(p_*)=\alpha x_*^2,$~\eqref{p*} becomes $x_*^2>\sqrt\alpha,$ i.e.,
$$
\sqrt\alpha+\sqrt{\alpha+3}>3\alpha^{1/4},
$$}
which in turn can be easily seen to be a true statement for all $\alpha\in(0,1).$

We now show that there is no more than one root of equation~\eqref{derz} inside $(0,\tau).$ The key observation is that the function $b'$ is strictly convex on the interval $[p_2,-\tau]$ and convex on the interval $[-\tau,0].$ Indeed, for $t\in[p_2,-\tau],$ we have $b'(t)=\alpha x^2,$ 
\cma{where $x$ is the unique solution of the equation $t=\frac12(3x-\frac1x)-\tau-1$ that lies in $[\sqrt\alpha,1]$ ($x=\frac s\alpha$ from~\eqref{b'}).} Viewed as a function of $t,$ $x$ is increasing and strictly convex, thus so is $b'.$ For $t\in[-\tau,p_1],$ $b'(t)=\alpha$ and for $t\in[p_1,0],$ $b'$ is again convex by the same argument as for $[p_2,-\tau].$ Since $b'$ is increasing, it is convex on $[-\tau,0].$

Now, assume that $0<\delta_1<\delta_2<\tau$ are two roots of equation~\eqref{derz}. Then $q(\delta_2)>q(\delta_1),$ which, by the convexity of $b'$ to the right of $v,$ implies that
$$
\frac{b'(q(\delta_2))-b'(v)}{q(\delta_2)-v}\ge \frac{b'(q(\delta_1))-b'(v)}{q(\delta_1)-v}.
$$ 
By~\eqref{derz}, $\frac{b'(q)-b'(v)}{q-v}=\frac{b'(v)-b'(p)}{v-p},$ thus, 
$$
\frac{b'(v)-b'(p(\delta_2))}{v-p(\delta_2)}\ge \frac{b'(v)-b'(p(\delta_1))}{v-p(\delta_1)}.
$$ 
However, we also have $p(\delta_2)<p(\delta_1),$ which, by the strict convexity of $b'$ to the left of $v,$ implies that
$$
\frac{b'(v)-b'(p(\delta_2))}{v-p(\delta_2)}<\frac{b'(v)-b'(p(\delta_1))}{v-p(\delta_1)}.
$$ 
This contradiction proves that there is at most one root of equation~\eqref{derz} in $(0,\tau).$  (As noted above, there is in fact precisely one such root; see Figure~\ref{derz_pic}.) This completes the proof.
\end{proof}
\begin{figure}[h!]
\centering{
\includegraphics[width=13cm]{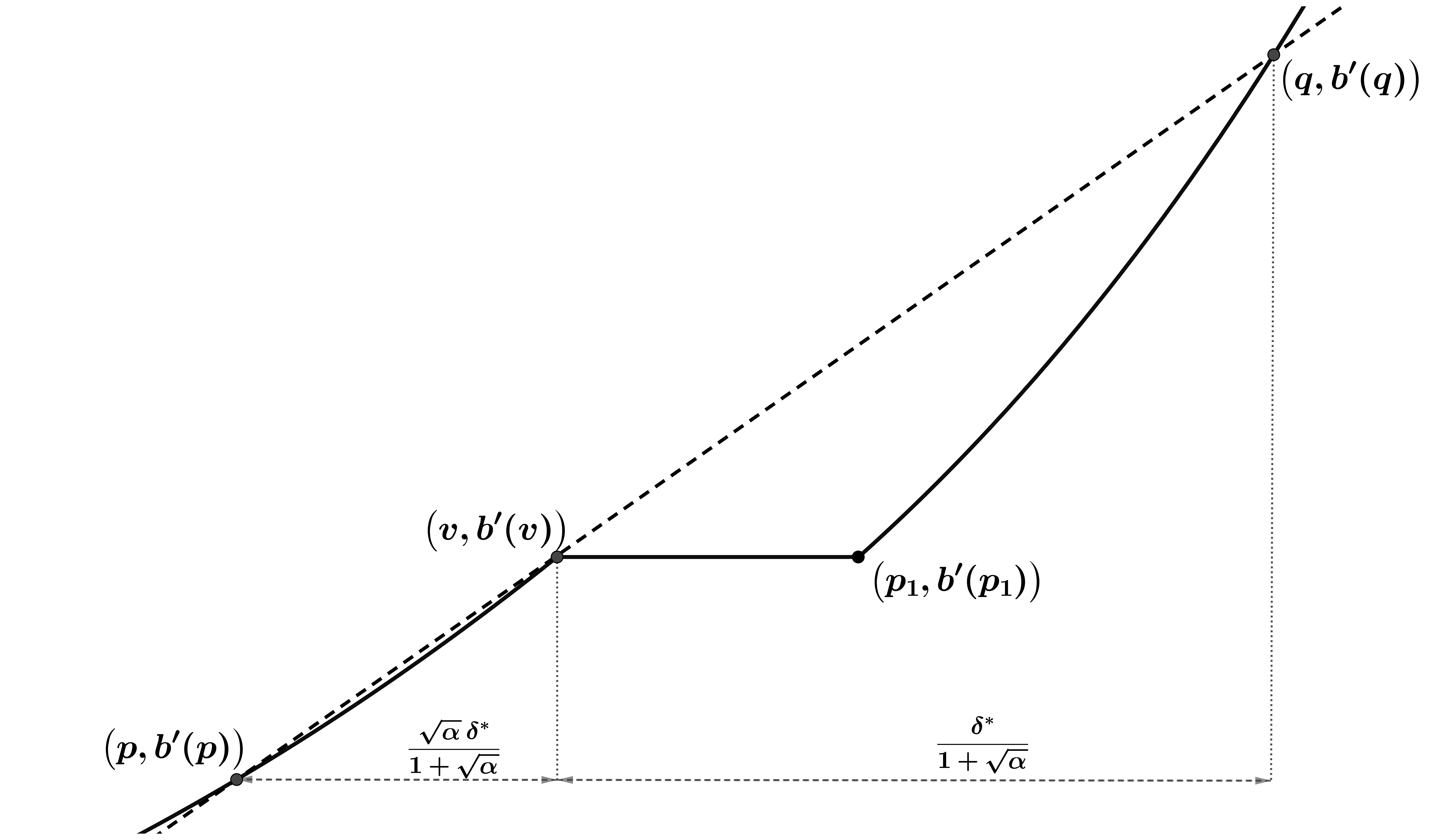}
\caption{The root $\delta^*$ of the equation $D'(\delta)=0$}
\label{derz_pic}
}
\end{figure}

\cma{
To simplify further calculations, we need to consider one more special case. \cbl{Recall definition~\eqref{tau} of the numbers $p_k:$ $p_0=\frac12\sqrt\alpha+\frac1{2\sqrt\alpha}-1,$ $p_k=p_0-k\tau.$
\begin{lemma}
\label{extension}
If $k\ge1,$ $p\in[p_k,-k\tau],$ and $q\in[p,p_{k-1}],$ then $H(p,q)\ge0.$
\end{lemma}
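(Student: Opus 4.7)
The strategy is to exploit the parabolic self-similarity of $B$ to reduce the general case $k\ge 2$ to the base case $k=1$, which is then verified by direct case analysis.

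\textit{Reduction via scaling.} For $k\ge 2$, set $p^*=p+(k-1)\tau$ and $q^*=q+(k-1)\tau$. Then $p^*\in[-\tau,p_1]$ and $q^*\in[p^*,p_0]$, so $(p^*,q^*)$ satisfies the hypothesis of the lemma with parameter $1$. I claim
\begin{equation*}
H(p,q)\;=\;\alpha^{k-1}H(p^*,q^*).
\end{equation*}
From Lemma~\ref{b_bound}, $b(p)=\alpha^k(p+k\tau+1)=\alpha^{k-1}b(p^*)$ directly. An analogous check on $\Gamma_1$ (using the three piecewise formulas in Lemma~\ref{b_bound}) shows $b(q)=\alpha^{k-1}b(q^*)$ in each of the three possibilities $Q\in\Omega_{2k-2},\Omega_{2k-1},\Omega_{2k}$, which map under $T_{-(k-1)\tau}$ to $\Omega_+,\Omega_1,\Omega_2$, respectively. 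Since $T_a$ is affine, $R^*:=T_{-(k-1)\tau}R=\frac{1}{1-\alpha}(P^*-\alpha Q^*)$, and the identity $B(R)=\alpha^{k-1}B(R^*)$ follows from formula~\eqref{d_3+} provided that $R\in\Omega_{2k-1}\cup\Omega_{2k}$; this inclusion is the geometric fact that must be checked from the constraints on $(p,q)$.

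\textit{Base case $k=1$.} Here $p\in[-\tau,p_1]$ (so $P\in\Omega_2$) and $q\in[p,p+\tau]\subseteq[p,p_0]$. Split by the location of $Q$:
\[
Q\in\Omega_2~(q\in[p,p_1]),\qquad Q\in\Omega_1~(q\in[p_1,0]),\qquad Q\in\Omega_+~(q\in[0,p+\tau]).
\]
In each subcase, substitute the explicit formulas of Lemma~\ref{b_bound} for $b(p)$ and $b(q)$ to obtain a closed-form expression for $H(p,q)$ in terms of $p$, $q$, and any auxiliary trajectory parameter (e.g.\ $s_q$ when $Q\in\Omega_1$). Verify $H\ge 0$ by establishing convexity or monotonicity in one variable and checking the boundary values $q=p$ (where $H=0$ trivially), $q=p+\tau$ (where $R\in\Gamma_0$ so that $B(R)=0$ and the inequality is elementary), together with the continuity transitions at $q=p_1$ and $q=0$.

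\textit{Main obstacle.} The most delicate step is the geometric bookkeeping: tracking the region of $\Omega$ containing $R=\frac{1}{1-\alpha}(P-\alpha Q)$ as $(p,q)$ vary over the allowed rectangle. In the reduction, this is needed to certify $R\in\Omega_{2k-1}\cup\Omega_{2k}$ and hence the scaling of $B(R)$; in the base case, $R$'s location determines the piecewise formula for $B(R)$. Following the pattern of the proofs of Lemmas~\ref{dirder} and~\ref{trajectory}, each resulting algebraic inequality is expected to factor as a perfect square times a manifestly non-negative quantity, at which point non-negativity is immediate.
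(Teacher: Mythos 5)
The reduction to $k=1$ via the parabolic shift is correct and matches the paper's Remark~\ref{period} in spirit. The gap is in the base case: the paper does \emph{not} proceed by substituting the explicit formulas from Lemma~\ref{b_bound} and verifying $H\ge 0$ by algebra. Instead, it constructs a function $\tilde B$ on the enlarged region $\Omega_2\cup\tilde\Omega_1$ (obtained by extending the extremal segments $\ell_s$ of $\Omega_1$ past $\Gamma_1$ to their second intersection $\tilde\ell_s$ and defining $\tilde B$ linearly along $\tilde\ell_s$), checks that $\tilde B$ is locally concave there and agrees with $B$ at $P$, $Q$, $R$, and then obtains $H(p,q)=\tilde B(P)-(1-\alpha)\tilde B(R)-\alpha\tilde B(Q)\ge0$ as an immediate consequence of three-point local concavity of $\tilde B$ (since $P=(1-\alpha)R+\alpha Q$ and $[R,Q]\subset\Omega_2\cup\tilde\Omega_1$). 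This sidesteps any explicit formula for $B(R)$ altogether.

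Your base case plan, by contrast, is unexecuted and underspecified in a way that conceals the real difficulty. You split only on the location of $Q$, but the closed-form expression for $H$ also requires an explicit formula for $B(R)$, and as $(p,q)$ range over the rectangle $R$ can sit in $\Omega_0$, $\Omega_1$, or $\Omega_2$. In $\Omega_1$ and $\Omega_2$ the value $B(R)$ is given in terms of the trajectory parameter $s(R)$, which is defined implicitly by a quadratic depending on $(r_1,r_2)$; your plan never says how that parameter is to be eliminated or controlled. Similarly, when $Q\in\Omega_1$ the term $b(q)$ involves the cubic root $s_q$ of~\eqref{root}, so the resulting inequality is genuinely transcendental in $(p,q)$ rather than polynomial. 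Your statement that the inequality is ``expected to factor as a perfect square times a manifestly non-negative quantity'' is a hope, not an argument, and the boundary checks you list ($q=p$ giving $H=0$ and $q=p+\tau$ giving $R\in\Gamma_0$) are fine but far from sufficient to close the interior. As written, the base case is not a proof, and the approach you outline is materially harder than the paper's and does not clearly terminate.

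Two smaller issues in the reduction step. First, the identity $B(R)=\alpha^{k-1}B(R^\ast)$ is exactly the place where the hypotheses of the lemma are used (they must guarantee that $R$ stays in $\Omega_-$ so the quasi-periodicity chain applies), and you flag this but do not verify it. Second, your phrasing ``certify $R\in\Omega_{2k-1}\cup\Omega_{2k}$'' is stronger than necessary and possibly false for some $(p,q)$: what is actually needed is that $R$ and $R^\ast$ both lie in the region where $B(x)=\alpha B(T_{-\tau}x)$ can be iterated, which is a slightly weaker statement. Neither point is fatal, but combined with the unproven base case they leave the argument open.
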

}
\begin{proof}
The proof relies on the fact that for such $p$ and $q$ there exists a function $\tilde B$ that coincides with $B$ at $P,$ $Q,$ and $R$ and that is locally concave in a domain that contains the segment $[R,Q].$ \cbl{In light of Remark~\ref{period} it is enough to consider the case $k=1.$}

Recall the family of extremal line segments $\{\ell_s\}_{\sqrt\alpha\le s\le 1}$ connecting the points $(u(s),u^2(s))$ and $(v(s),v^2(s)+1)$ with $u(s)$ and $v(s)$ given by~\eqref{vu_1}. Each $x\in\Omega_1$ lies on exactly one such segment and $B(x)$ is given by~\eqref{d_1}.
Now, for each $s$ let $\tilde\ell_s$ be the extension of $\ell_s$ until the second point of intersection with $\Gamma_1.$ Thus, $\tilde\ell_s$ connects the points $(u(s), u^2(s))$ and $(v^+(s), (v^+(s))^2+1),$ where $v^+(s)$ is given by~\eqref{v+} with $\xi=s$: $v^+(s)=v(s)+\frac1s-s.$ Let $\omega_1$ be the region lying under the $\tilde\ell_{\sqrt\alpha}$ and above $\Gamma_1:$ 
$$
\omega_1=\{x\colon: p_1\le x_1\le p_0,~x_1^2+1\le x_2\le (p_1+p_0)x_1-p_1p_0+1\}
$$ 
Let $\tilde\Omega_1=\Omega_1\cup\omega_1;$ see Figure~\ref{tilde_omega_1}. Then each point $x\in\tilde\Omega_1$ lies on exactly one segment $\tilde\ell_s.$ To define $\tilde B(x),$ we simply extend definition~\eqref{d_1} to $\tilde\Omega_1:$
$$
\tilde B(x)=\frac12\,(1+s^2)(x_1-u).
$$
\begin{figure}[h]
\centering{
\includegraphics[width=17cm]{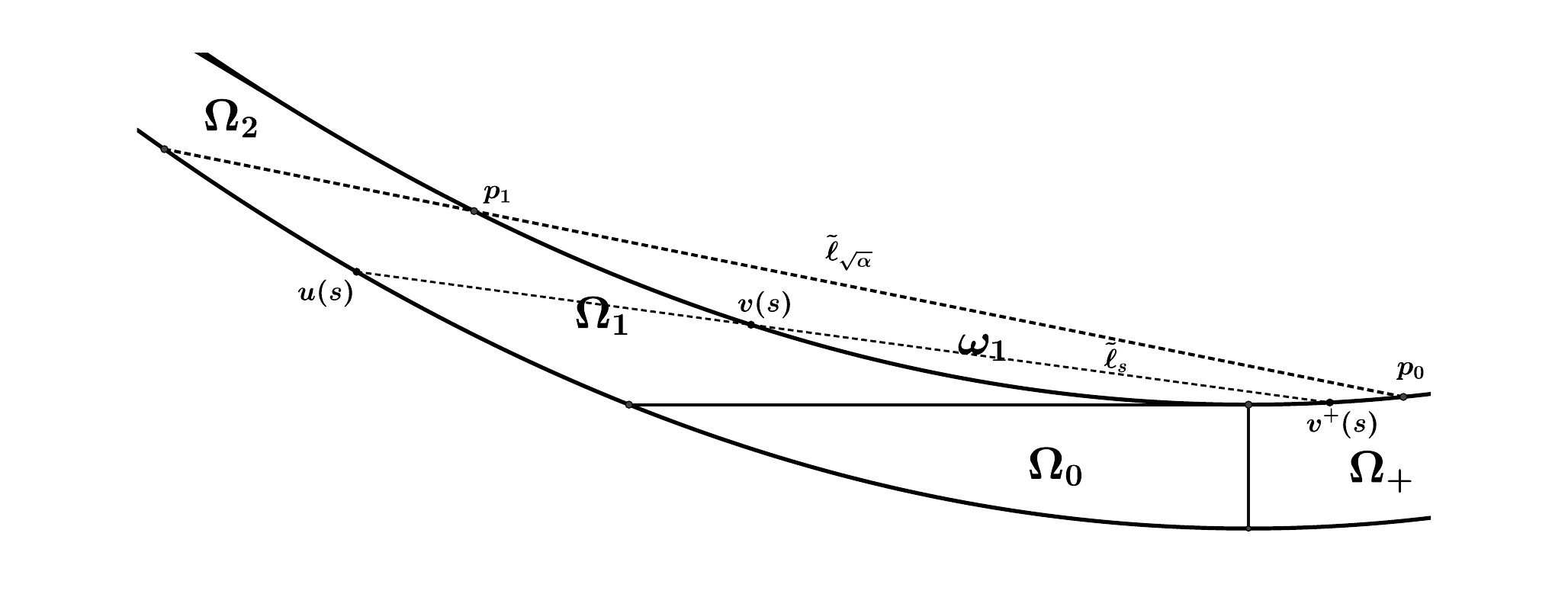}
\caption{The region $\tilde\Omega_1=\Omega_1\cup\omega_1$ along with a generic segment $\tilde\ell_s$}
\label{tilde_omega_1}
}
\end{figure}

Observe that $\tilde B(x)=B(x)$ for $x\in\Gamma_1\cap\{0\le p_0\}.$ Furthermore, the argument in~Lemma~\ref{lc} goes through without any changes and we conclude that $\tilde{B}$ is locally concave in~$\tilde\Omega_1.$ 

Let $\tilde{B}(x)=B(x)$ for $x\in\Omega_2;$ then $\tilde{B}$ is locally concave on $\Omega_2\cup\tilde\Omega_1.$ It remains to observe that if $p$ and $q$ are as in the statement of the lemma, then $[R,Q]\in\Omega_2\cup\tilde\Omega_1,$ which means that
$$
H(p,q)=B(P)-(1-\alpha)B(R)-\alpha B(Q)=\tilde B(P)-(1-\alpha)\tilde B(R)-\alpha \tilde B(Q)\ge0. \qedhere
$$
\end{proof}
\begin{remark}
The proof of Lemma~\ref{main_alpha} would have been much shorter if a similar locally concave extension covering all applicable segments $[R,Q]$ were available for $\Omega_2.$ Unfortunately, this is not the case: the maximal domain of extension is bounded by the envelope of the extremal segments corresponding to $\Omega_2;$ since this envelope is both convex and external to $\Omega$ (see Figure~\ref{f3} in the next section), this maximal domain will not be sufficient for our purposes.
\end{remark}
}
\cma{We are now in a position to finish the proof of Lemma~\ref{main_alpha}}. 
\begin{proof}[Proof of the general case]

We parametrize all applicable triples $R,P,Q$ in Condition~\ref{C3} of Lemma~\ref{L2} by the location of the extremal trajectory corresponding to $R$ and the location of $R$ within that trajectory. Take a number $\xi\in[\sqrt\alpha,1].$ Setting $v-u=\xi$ and specifying the domain $\Omega_k$ where $R$ lies uniquely determines $v.$
Let $V(\xi)=(v,v^2+1)$ and $U(\xi)=(u,u^2).$ 

Now, let $\theta\in[0,1]$ be such that $R=(1-\theta)U(\xi)+ \theta V(\xi).$ This in turn defines, \cma{as functions of $\xi$ and $\theta,$} two points $P(\xi,\theta)=(p,p^2+1)$ and $Q(\xi,\theta)=(q,q^2+1)$ such that $P=(1-\alpha)R+\alpha Q$ and $p\le q.$ 

Using~\eqref{lmv}, we see that to show that $H(p,q)\ge0$ for all pairs $p,q$ such the point $R=\frac{P-\alpha Q}{1-\alpha}$ lies on the extremal line connecting $U$ and $V,$ is the same as to show that the function
\eq[A(xi,theta)]{
W(\xi,\theta):=b\Big(v-(1-\theta)\xi+\frac{\alpha\delta}{1-\alpha}\Big)-(1-\alpha)\theta b(v)-\alpha b\Big(v-(1-\theta)\xi+\frac\delta{1-\alpha}\Big)
}
is non-negative on the domain $\{\sqrt\alpha\le\xi\le1,~0\le\theta\le 1\}.$ 

Let us first consider the boundary of this domain. If $\theta=0,$ then $\delta=\tau$ and we have
$W=b(p)-\alpha b(p+\tau)=0.$
If $\theta=1,$ then $R=P=Q$ and $W=0.$ If $\xi=\sqrt\alpha,$ \cbl{then the trajectory connecting $U$ and $V$ separates $\Omega_{2k-1}$ and $\Omega_{2k}$ for some $k\ge1.$ Therefore, $v=p_k$ and $v^+=v+\tau=p_{k-1},$ where the numbers~$p_k$ are defined by~\eqref{tau} and~$v^+$ is given by~\eqref{v+}. Since $q-p\le\tau,$ we have $v\le p\le q\le v^+,$ so by Lemma~\ref{extension} $W\ge0.$}
 Finally, if $\xi=1,$ then $W\ge0$ by Lemma~\ref{tangent}.
  
We now show that $W$ does not have non-negative extrema inside the domain. The partial derivatives are
$$
W_\theta=b'(p)\Big(\xi+\frac{\alpha\delta_\theta}{1-\alpha}\Big)-(1-\alpha)b(v)-\alpha b'(q)\Big(\xi+\frac{\delta_\theta}{1-\alpha}\Big)
$$
and
$$
W_\xi=b'(p)\Big(v_\xi-1+\theta+\frac{\alpha\delta_\xi}{1-\alpha}\Big)-(1-\alpha)\cma{\theta}b'(v)v_\xi-
\alpha b'(q)\Big(v_\xi-1+\theta+\frac{\delta_\xi}{1-\alpha}\Big).
$$
Setting $W_\theta$ and $W_\xi$ equal to $0$ and rearranging gives the following equation for $b'(p)$:
\eq[eq_p]{
b'(p)\big[\delta_\xi\xi-(v_\xi-1+\theta)\delta_\theta\big]=(1-\alpha)\left(b(v)\Big(v_\xi-1+\theta+\frac{\delta_\xi}{1-\alpha}\Big)
-\theta b'(v)v_\xi\Big(\xi+\frac{\delta_\theta}{1-\alpha}\Big)\right).
}
From~\eqref{delta}, we have $\delta_\theta=\frac{\tau^2}{\delta}\,(\xi^2\theta-\frac{1+\xi^2}2)$ and 
$\delta_\xi=-\frac{\tau^2}\delta\,\xi\theta(1-\theta).$ Furthermore, as can be seen from~\eqref{bb} and~\eqref{b'}, we also have $b'(v)=\frac{2\xi}{\xi^2+1}\,b(v).$ After plugging these expressions into~\eqref{eq_p} and simplifying, we see that the left-hand side becomes
$$
-b'(p)\,\frac{\tau^2}{2\delta}\,\big[(1+\xi^2)(1-\theta)+v_\xi(2\xi^2\theta-1-\xi^2)\big],
$$ 
while the right-hand side becomes
$$
-b'(v)\,\frac{1-\alpha}{2\xi}\,\Big(\frac{1-\alpha}{\alpha\delta}\,\theta\xi+1\Big)\big[(1+\xi^2)(1-\theta)+v_\xi(2\xi^2\theta-1-\xi^2)\big].
$$
It is easy to show that for any location of the point $V,$ the common factor in these expressions is never zero, unless $\theta=1$ and $\xi=1,$ in which case, of course, $W=0.$ Assuming that is not the case, after cancellation and rearrangement~\eqref{eq_p} becomes
\eq[pv']{
b'(p)=b'(v)\Big(1-\frac{v-p}\xi\Big).
}
We would now like to consider all possible locations of the point $R$ or, equivalently, the point $V.$ According to Remark~\ref{period}, it is enough to consider \cma{two cases: $V\in\Omega_1$ and $V\in\Omega_2.$

Assume first that $V\in\Omega_1.$ That means that $v=\frac12(3\xi-\frac1\xi)-1$ and $b'(v)=\xi^2.$  We must also have $P\in\Omega_1$ (otherwise, $q>p+\tau$); thus, $p=\frac12(3z-\frac1z)-1$ for some $z\in[\sqrt\alpha,1]$ and $b'(p)=z^2.$ Then~\eqref{pv'} becomes
$$
z^2=\xi^2\left(1-\frac{\frac12(3\xi-\frac1\xi)-\frac12(3z-\frac1z)}\xi\right)\quad\implies\quad\xi^2-\xi\Big(3z-\frac1z\Big)+2z^2-1=0.
$$
This gives either $\xi=z$ or $\xi=2z-\frac1z.$ In the first case, $p=v,$ thus $W\ge0$ by Lemma~\ref{trajectory}. In the second case, $\xi\le z,$ thus $p\ge v\ge p_1,$ which means that $q\le v^+\le p_0$ and, thus, $W\ge0$ by Lemma~\ref{extension}.

Assume now that $V\in\Omega_2.$ That means that $v=\frac12(\xi+\frac1\xi)-\tau-1$ and $b'(v)=\alpha.$ It is clear from the geometry that we must have either $P\in\Omega_1,$ $P\in\Omega_2,$ or $P\in\Omega_3.$ If $P\in\Omega_1,$ Lemma~\ref{extension} applies and we have $W\ge0.$ If $P\in\Omega_2,$ then
then $b'(p)=\alpha$ and~\eqref{pv'} gives $p=v,$ thus $W\ge0$ by Lemma~\ref{trajectory}. If $P\in\Omega_3,$ then $p=\frac12(3z-\frac1z)-\tau-1$ for some $z\in[\sqrt\alpha,1]$ and $b'(p)=\alpha z^2.$ Then~\eqref{pv'} becomes
$$
z^2=1-\frac{\frac12(\xi+\frac1\xi)-\frac12(3z-\frac1z)}\xi\quad\implies\quad\frac1{\xi^2}-\frac1\xi\Big(3z-\frac1z\Big)+2z^2-1=0.
$$
Solving for $\frac1\xi$ we have either $\frac1\xi=z$ or $\frac1\xi=2z-\frac1z.$ Since $\cma{z\le1}$ and $\xi\le1,$  
the only possible solution in either case is $\xi=1$ and $z=1.$ That means that the points $R,$ $P,$ and $Q$ coincide and $W=0.$
}
\end{proof}

\section{How to find the Bellman candidate $B$}
\label{how_to}
Recall the notation: $\Omega_-=\cup_{k\ge1}\Omega_k,$ $\Omega_*=\cup_{k\ge0}\Omega_k.$
It is a straightforward matter to find the Bellman candidate $B$ in the domain $\Omega_0,$ which can be seen to be the maximal convex part of $\Omega_*$ that includes all of the boundary $x_1=0.$ Specifically, using the arguments from~\cite{ssv} and~\cite{cincy}, we seek the function $A(x;L)=L+B(T_Lx)$ on $S$ that satisfies the homogeneous \ma equation in $x,$ $A_{x_1x_1}A_{x_2x_2}=A_{x_1x_2}^2,$ as well as the boundary condition $\frac{\partial A}{\partial L}|_{x_1=L}=0.$ When translated to $B,$ these requirements yield the function $B(x)=x_1+\sqrt{x_2}$ in $\Omega_0.$ Since we also want $B$ to satisfy Condition~\ref{cond2} of Lemma~\ref{induction} (with $L=0$), this also means that $B(x)=x_1+\sqrt{x_2-x_1^2}$ in $\Omega_+.$ 

To construct $B$ in $\Omega_-,$ we first compute it on the upper boundary and then solve a certain \ma boundary value problem. To find the formula for $b(x_1):=B(x_1,x_1^2),$ we use an idea from~\cite{melas}. In that paper, Melas found the Bellman function for the dyadic maximal operator on $L^2(\rn).$  His Bellman function -- let us call it simply $\bel{B}$ here --  also had the variables $x_1, x_2,$ and $L,$ defined the same way as in~\eqref{b_test}. He first found the function $\bel{B}\cma{(x,x_1)}$ and then used it to find the full formula for $\bel{B}\cma{(x,L)}.$ We will employ a somewhat similar reasoning here, though things are significantly complicated by the fact that $\Omega_-$ is non-convex, due to the BMO restriction $x_2\le x_1^2+1,$ absent in~\cite{melas}.

\subsection{The candidate in $\Omega_1$}
Using a variant of Melas's procedure, we are looking for a function $b(x_1)=B(x_1,x_1^2+1)$ as
\eq[bsup]{
b(x_1)=\sup\{(1-s^2)L+s^2(y_1+1)\}=\sup\{s^2(y_1+1)\}\,,
}
where $\sup$ is taken over all $s\in[0,1]$ and all points $y=(y_1,y_1^2+1)$ \cma{
for which there is a $z\in\Omega$ (i.e., $z=(z_1, z_2)$ and $0\le z_2-z_1^2\le 1$) such that $x=(x_1,x_1^2+1)$ is a convex combination of $y$ and $z$: $x=(1-s^2)z+s^2y.$}
The domain for the variable $y_1$ is determined from the condition
$$
z_2=\frac{x_1^2-s^2y_1^2}{1-s^2}+1\ge\Big(\frac{x_1-s^2y_1}{1-s^2}\Big)^2=z_1^2\,,
$$
which gives
$
\cma{|y_1-x_1|}\le\frac{1-s^2}s\,.
$
Thus, the supremum in~\eqref{bsup} is attained for $y_1=x_1+\frac{1-s^2}s:$
$$
b(x_1)=\sup_{0\le s\le1}\big\{s-s^3+(1+x_1)s^2\big\}\,.
$$
Let us write $\eta=1+x_1$. Since $x_1\le0$, we have $\eta\le1$.
We are looking for the maximal value of the cubic polynomial $-s^3+\eta s^2+s$
on the interval $s\in[0,1]$. Its derivative $-3s^2+2\eta s+1$ has two roots of
different signs. The positive root,
\eq[root]{
s=\frac{\eta+\sqrt{\eta^2+3}}3,
}
lies in the interval $[0,1].$
Hence, the supremum in the definition of $b$ is attained for this specific $s;$ from now on, 
the letter $s$ will denote not a free parameter, but the function of $x_1$ defined
by~\eqref{root}, and
\eq[H(s)]{
b(x_1)=s+\eta s^2-s^3=\frac12s(s^2+1)=:h(s)\,.
}

Having determined the function $b,$ we now aim to find the minimal concave function $B$ in $\Omega_-$ subject to two boundary conditions: $B|_{\Gamma_1}=b$ and $B|_{\Gamma_0}=0.$  The graph of any such function is a ruled surface; thus, $\Omega_-$ is foliated by straight-line segments along which $B$ is linear \cma{and its gradient is constant (we have earlier called such segments extremals)}. Let the points $(u,u^2)$ and $(v,v^2+1)$
be the two endpoints of \cma{an extremal}. That means that the tangent
vectors to the boundary curves of the graph of $B,$ along with the line passing through
the points $(u,u^2,B(u,u^2))=(u,u^2,0)$ and $(v,v^2+1,b(v)),$ all
lie in the same plane. Therefore,
$$
\det
\begin{pmatrix}
1 & 2u & 0
\\
1 & 2v & b'(v)
\\
v-u & v^2+1-u^2 & b(v)
\end{pmatrix}=0\,.
$$
This gives the following equation:
\eq[v-u]{
(v-u)^2-2\frac{b(v)}{b'(v)}(v-u)+1=0\,,
}
which explicitly determines the first coordinate $u$ of the endpoint on $\Gamma_0$ as a function of the first coordinate $v$ of the endpoint on $\Gamma_1.$
Let us solve this equation for our specific $b$.
Since
$
\frac{ds}{d\eta}=\frac{2s^2}{3s^2+1}
$
and $\frac{d\eta}{dx_1}=1,$ we have
$$
b'(v)=h'(s)\,\frac{2s^2}{3s^2+1}=\frac12(3s^2+1)\cdot\frac{2s^2}{3s^2+1}=s^2\quad\implies\quad 
\frac{b(v)}{b'(v)}=\frac{s^2+1}{2s},
$$
meaning equation~\eqref{v-u} has two roots: $v-u=s$ and $v-u=\frac1s$. From the geometry
it is clear that $|v-u|\le1$, so we have to take the first root, $v-u=s$. Hence,
$$
u=v-s\,.
$$

It is convenient to parametrize our extremals by $s\in(0,1]$. Then the
extremal $\ell_s$ is the segment connecting the points $(v,v^2+1)$ and $(u,u^2),$ where
\eq[v(s)]{
v=v(s)=\frac12\,\Big(3s-\frac1s\Big)-1,\qquad u=u(s)=v(s)-s=\frac12\,\Big(s-\frac1s\Big)-1.
}
Thus, the slope of $\ell_s$ is
$
\frac{v^2+1-u^2}{v-u}=-2(1-s),
$
and the equation of $\ell_s$ is
\eq[extrem]{
\begin{aligned}
x_2&=-2(1-s)(x_1-u)+u^2=-2(1-s)x_1+\frac1{4s^2}-\frac12+2s-\frac34s^2\,.
\end{aligned}
}
Finally, for $x\in\Omega_-,$ we let 
$$
B(x)=\frac{x_1-u}{v-u}\,h(s)=\frac12\,(1+s^2)(x_1-u),
$$
where $s=s(x)$ is given by~\eqref{extrem}.

Note that the second root of~\eqref{v-u}, which is $v-u=\frac1s,$ corresponds to the second point of intersection
the line containing $\ell_s$ with the upper parabola. Denoting this point $(v^+,(v^+)^2+1),$ we have
\eq[v_plus]{
v^+(s)=u(s)+\frac1s=\frac12\,\Big(s+\frac1s\Big)-1\,.
}
Subject to a verification of its properties, we have constructed the minimal locally concave function in $\Omega_-$ with specified boundary values on $\Gamma_0\cap\Omega_-$ and $\Gamma_1\cap\Omega_-.$ Let us recall that the ultimate goal is the construction of an $\alpha$-concave function on $\Omega.$ Such a function must satisfy inequality~\eqref{601} for all $\beta\cma{\in}[\alpha,\frac12].$ However, we do not want to make it too accommodating, in the sense of satisfying this definition for even smaller $\beta.$ In other words, in considering the segments $[x^-,x^+]$ as in Definition~\ref{def}, we want to ensure that the portion of the $[x^-,x^+]$ that lies outside of $\Omega$ is no larger, relative to the whole segment, than $\cma{1-\alpha}.$ When applied to the points $x^-=(u,u^2)$ and $x^+=(v^+,(v^+)^2+1),$ this  requirement gives
$$
\frac{v-u}{v^+-u}\ge\alpha\qquad\Longleftrightarrow\qquad \cma{s^2}\ge\alpha.
$$
Therefore, we will restrict $s$ to the interval $[\sqrt\alpha,1]$ in our construction. The smallest $v$ is then $v=p_1=\frac32\,\sqrt\alpha-\frac1{2\sqrt\alpha}-1$ (cf.~\eqref{tau}), meaning we have defined our candidate $B$ in $\Omega_1.$   

We would like to find the envelope of the family $\{\ell_s\}$. Let $x(s)=(x_1(s),x_2(s))$ be the 
tangent point of $\ell_s$ to the envelope. Then the slope of $\ell_s$ equals the derivative
\eq[slope]{
\frac{dx_2}{dx_1}=\frac{x'_2(s)}{x'_1(s)}=-2(1-s)\,.
}
Hence, the graph of $x(s)$ is a concave curve starting with the zero slope at the point $(0,1)$ and having the limit slope of $-2$. Differentiating~\eqref{extrem} with $x_i=x_i(s)$ we get
$$
x'_2(s)=-2(1-s)x'_1(s)+2x_1(s)-\frac1{2s^3}+2-\frac32\,s\,.
$$
\cma{Using~\eqref{slope} to solve this for $x_1$ and then using~\eqref{extrem} to get $x_2,$ we have}
$$
x_1(s)=\frac1{4s^3}-1+\frac34s=\frac{(1-s)^2(1+2s+3s^2)}{4s^3}\,,\quad x_2(s)=-\frac{2-3s-6s^3+6s^4-3s^5}{4s^3}\,.
$$
The extremal trajectories in $\Omega_1$ along with their envelope are shown in Figure~\ref{f3}.

\subsection{The candidate in the rest of $\Omega_-$}
Having constructed the extremal foliation for candidate $B$ in the domain $\Omega_1,$ we need to understand the foliation to the left of $\ell_{\sqrt\alpha}$. Again, we first determine $B$ on the upper parabola. The basic idea behind our definition comes from previous work on dyadic BMO, most importantly~\cite{alpha_trees}: we postulate that the main $\alpha$-concavity inequality~\eqref{601} (with $F=B$) becomes an equality when when $\beta=\alpha,$ $x^-$ is on $\Gamma_0,$ and both $x^+$ and $(1-\alpha)x^-+\alpha x^+$ are on $\Gamma_1.$ This choice is geometrically intuitive, as this configuration maximizes the portion of the segment $[x^-,x^+]$ that is external to the domain. 

For all $v\le p_1,$ we let 
\eq[rec]{
b(v)=(1-\alpha)B((v-\sqrt\alpha),(v-\sqrt\alpha)^2)+\alpha b(v+\tau)=\alpha b(v+\tau)\,.
}
We now seek the smallest locally concave function in $\Omega_-\setminus\Omega_1$ with the specified boundary conditions on the upper and lower boundaries. To that end, we need to determine the foliation of the domain by extremal segments.

Let us first consider the interval 
$v\in\big[-\tau,p_1\big]$,
when $v+\tau\ge0$ and $b(v+\tau)=v+\tau+1$. For such $v$ we have
\eq[h(v)1]{
b(v)=\alpha\big(v+\tau+1\big)\,.
}
It will be convenient to parametrize our trajectories $\ell_s$ by a parameter $s$ in such a way that $v-u=\frac\alpha s$.
After plugging this in~\eqref{v-u} together with
$$
\frac{b(v)}{b'(v)}=v+\tau+1
$$
we get
$$
s^2-2v\alpha s-2(1+\sqrt\alpha-\alpha)\sqrt\alpha s+\alpha^2=0\,.
$$
Hence,
\eq[v(s)1]{
v=\frac12\,\Big(\frac{s}\alpha+\frac{\alpha}s\Big)-\tau-1\,,\qquad 
u=v-\frac\alpha s=\frac12\,\Big(\frac{s}\alpha-\frac{\alpha}s\Big)-\tau-1.
}
When $s$ decreases from $\sqrt\alpha$ to $\alpha$, the value of $v$
deceases from 
$p_1$ to $-\tau$.
In terms of $s$ we can rewrite the expression~\eqref{h(v)1} for $b$:
\eq[H(s)1]{
b(v(s))=\frac{\cma{\alpha^2}}{2s}\,\Big(1+\frac{s^2}{\alpha^2}\Big)\,=:h(s)
}
The extremal segment $\ell_s$ has the equation
\eq[extrem-t]{
x_2=2\Big(\frac{s}{\alpha}-\tau-1\Big)x_1
-\frac{3s^2}{4\alpha^2} +2(\tau+1)\frac s\alpha+\frac12-(\tau+1)^2+\frac{\alpha^2}{4s^2}.
}
As before, the function $B$ is linear on the extremal line and can be calculated using its values at the
ends of $\ell_s$:
$$
B(x)=\frac{x_1-u}{v-u}\,h(s)=\frac\alpha{2}\,\Big(1+\frac{s^2}{\alpha^2}\Big)\,(x_1-u),
$$
where $s=s(x)$ is defined by~\eqref{extrem-t} \cma{and $u=u(s)$ is defined by~\eqref{v(s)1}}.
\begin{figure}[h]
\centering{
\includegraphics[width=16cm]{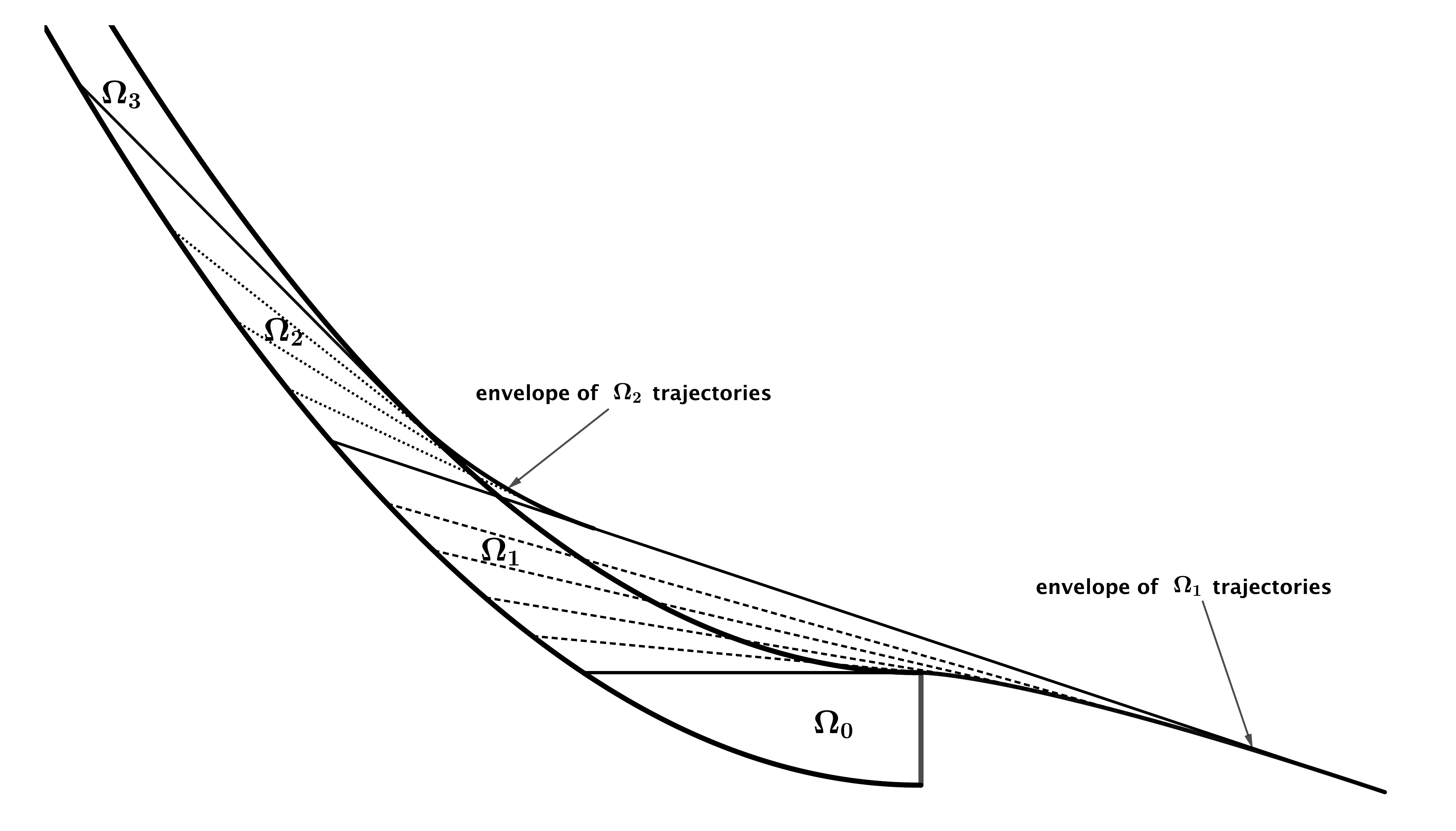}
\caption{The extremal trajectories in $\Omega_1$ and $\Omega_2$ and their envelopes}
\label{f3}
}
\end{figure}
Now, let us find the envelope of the family $\{\ell_s\}$. As before, let $x(s)=(x_1(s),x_2(s))$ be
the coordinate of the tangent point of $\ell_s$ to the envelope. Proceeding as in the previous case, we obtain
$$
x_1(s)=\frac{3s}{4\alpha}+\frac\alpha{4s}-\tau-1,\qquad 
x_2(s)=(\tau+1)^2
-\frac{3s^4+\alpha^4}{2s^3\alpha}\,(\tau+1)+
\frac{3s^4+2s^2\alpha^2+3\alpha^4}{4s^2\alpha^2}\,.
$$
The extremal trajectories in $\Omega_2$ along with their envelope are shown in Figure~\ref{f3}.

We have constructed the Bellman candidate $B$ in $\Omega_1$ and $\Omega_2.$ In the rest of $\Omega_-,$ formula~\eqref{rec} dictates that the foliation be determined by~\eqref{s_3+} and, thus, that $B$ be defined by~\eqref{d_3+}.

\end{document}